\documentclass{amsart}

\usepackage{amsmath, amssymb, amsthm}
\usepackage{bbm}

\newtheorem{theorem}{Theorem}[section]
\newtheorem{lemma}[theorem]{Lemma}
\newtheorem{proposition}[theorem]{Proposition}
\newtheorem{corollary}[theorem]{Corollary}
\newtheorem{claim}[theorem]{Claim}

\newtheorem{fact}[theorem]{Fact}
\newtheorem{question}[theorem]{Question}

\theoremstyle{definition}
\newtheorem{definition}[theorem]{Definition}
\newtheorem{remark}[theorem]{Remark}

\newcommand{\cf}{\mathrm{cf}}
\newcommand{\dom}{\mathrm{dom}}
\newcommand{\bb}{\mathbb}

\newcommand{\otp}{\mathrm{otp}}

\newcommand{\nacc}{\mathrm{nacc}}
\newcommand{\acc}{\mathrm{acc}}
\newcommand{\pred}{\mathrm{pred}}
\newcommand{\height}{\mathrm{ht}}

\synctex=1

\begin{document}
\title{Aronszajn trees, square principles, and stationary reflection}
\author{Chris Lambie-Hanson}
\address{Einstein Institute of Mathematics, Hebrew University of Jerusalem \\ 
Jerusalem, 91904, Israel}
\email{clambiehanson@math.huji.ac.il}
\thanks{This research was completed while the author was a Lady Davis Postdoctoral 
  Fellow. The author would like to thank the Lady Davis Fellowship Trust and the 
  Hebrew University of Jerusalem. The author would also like to thank Assaf Rinot 
  for some helpful conversations.}
\begin{abstract}
  We investigate questions involving Aronszajn trees, square principles, and stationary 
  reflection. We first consider two strengthenings of $\square(\kappa)$ 
  introduced by Brodsky and Rinot for the purpose of constructing 
  $\kappa$-Souslin trees. Answering a question of Rinot, we prove that 
  the weaker of these strengthenings is compatible with stationary 
  reflection at $\kappa$ but the stronger is not. We then prove that, 
  if $\mu$ is a singular cardinal, $\square_\mu$ implies the existence of a 
  special $\mu^+$-tree with a $\cf(\mu)$-ascent path, 
  thus answering a question of L\"{u}cke.
\end{abstract}
\maketitle

\section{Introduction}

In this paper, we address recent questions of Rinot and L\"{u}cke involving 
trees and square sequences. We begin by reviewing the cast of characters.

A partial order $(T, <_T)$ is a \emph{tree} if, for 
all $t \in T$, the set $\pred_T(t) := \{s \in T \mid s<_T t\}$ is 
well-ordered by $<_T$. We often abuse notation and refer to such a tree 
as $T$ rather than $(T, <_T)$. If $T$ is a tree and $t \in T$, then 
$\height_T(t) = \otp(\pred_T(t), <_T)$. For an ordinal $\alpha$, $T_\alpha$ 
is the set of $t \in T$ such that $\height_T(t) = \alpha$, and the \emph{height} 
of the tree, $\height(T)$, is the least $\alpha$ such that $T_\alpha = \emptyset$. 
$T_{<\alpha}, T_{\leq \alpha}$, etc. are defined in the obvious way. A subset $b$ 
of a tree $T$ is a \emph{chain} in $T$ if $b$ is linearly ordered by $<_T$. If a chain 
$b$ in $T$ is downward closed under $<_T$, it is called a \emph{branch} through $T$. A 
branch $b$ through a tree $T$ is a \emph{cofinal branch} if, for all 
$\alpha < \height(T)$, $b \cap T_\alpha \not= \emptyset$. A subset of $T$ is an \emph{antichain} 
if its elements are pairwise $<_T$-incomparable. If $\kappa$ 
is an infinite, regular cardinal and $T$ is a tree, then $T$ is a \emph{$\kappa$-tree} 
if $\height(T) = \kappa$ and, for all $\alpha < \kappa$, $|T_\alpha| < \kappa$. 
If $T$ is a $\kappa$-tree and $T$ has no cofinal branch, then $T$ is said to be a 
\emph{$\kappa$-Aronszajn} tree.

We will be interested in two particular types of $\kappa$-Aronszajn trees: $\kappa$-Souslin trees and special 
$\kappa$-trees.

\begin{definition}
  Let $\kappa$ be an uncountable, regular cardinal. A \emph{$\kappa$-Souslin tree} is a 
  $\kappa$-Aronszajn tree with no antichains of size $\kappa$.
\end{definition}

The following definition is due to Todorcevic. In what follows, if $T$ is a tree of height 
$\kappa$ and $S \subseteq \kappa$, then $T \restriction S$ is 
$\bigcup\{T_\alpha \mid \alpha \in S\}$, equipped with the restriction of $<_T$.

\begin{definition} [Todorcevic, \cite{Todorcevic1981}]
  Suppose $\kappa$ is an uncountable, regular cardinal and $T$ is a tree of height $\kappa$.
  \begin{enumerate}
    \item If $S \subseteq \kappa$ and $r:T \restriction S \rightarrow T$, then $r$ is 
      \emph{regressive} if, for every non-minimal $t \in T \restriction S$, 
      $r(t) <_T t$.
    \item If $S \subseteq \kappa$, $S$ is \emph{non-stationary with respect to $T$} if 
      there is a regressive $r:T \restriction S \rightarrow T$ such that, for every 
      $t \in T$, there is $\mu_t < \kappa$ and a function $c_t:r^{-1}(t) \rightarrow \mu_t$ 
      such that $c_t$ is injective on chains in $T$.
    \item $T$ is \emph{special} if $\kappa$ is non-stationary with respect to $T$.
  \end{enumerate}
\end{definition}

If $\kappa = \mu^+$ for some cardinal $\mu$, then it can be shown that this coincides with 
the classical definition stating that a tree $T$ of height $\kappa$ is special if there is a 
function $f:T \rightarrow \mu$ that is injective on chains in $T$. It is easily seen that 
a special tree cannot have a cofinal branch and also cannot be Souslin.

We will be using a variety of square principles. The earliest such principle was introduced 
by Jensen \cite{jensen}; the generalization given here is due to Schimmerling.

\begin{definition} [Schimmerling, \cite{schimmerling}]
  Suppose $\mu$ and $\lambda$ are cardinals. A sequence $\vec{\mathcal{C}} = 
  \langle \mathcal{C}_\alpha \mid \alpha < \mu^+ \rangle$ is a $\square_{\mu, < \lambda}$-sequence 
  if:
  \begin{enumerate}
    \item for all $\alpha < \mu^+$, $\mathcal{C}_\alpha$ is a set of clubs in $\alpha$ with 
      $0 < |\mathcal{C}_\alpha| < \lambda$;
    \item for all $\alpha < \beta < \mu^+$ and all $C \in \mathcal{C}_\beta$, if 
      $\alpha \in \acc(C)$, then $C \cap \alpha \in \mathcal{C}_\alpha$;
    \item for all $\alpha < \mu^+$ and all $C \in \mathcal{C}_\alpha$, $\otp(C) \leq \mu$.
  \end{enumerate}
  $\square_{\mu, < \lambda}$ is the assertion that there is a $\square_{\mu, < \lambda}$-sequence. 
  $\square_{\mu, < \lambda^+}$ is typically denoted $\square_{\mu, \lambda}$, $\square_{\mu, 1}$ is 
  typically denoted $\square_\mu$ and is Jensen's original square principle, and $\square_{\mu, \mu}$, 
  also investigated by Jensen,
  is often denoted $\square^*_\mu$ and is known as \emph{weak square}.
\end{definition}

An immediate consequence of condition (3) in the definition of $\square_{\mu, < \lambda}$ is that, 
if $\vec{\mathcal{C}}$ is a $\square_{\mu, < \lambda}$-sequence, then $\vec{\mathcal{C}}$ does not 
have a \emph{thread}, i.e. a club $D \subseteq \mu^+$ such that, for every $\alpha \in \acc(D)$, 
$D \cap \alpha \in \mathcal{C}_\alpha$. A weakening of $\square_\mu$, due to Todorcevic, replaces 
this order-type restriction with its anti-thread consequence.

\begin{definition} [Todorcevic]
  Suppose $\kappa$ is a regular, uncountable cardinal and $\lambda > 1$ is a cardinal. 
  A sequence $\vec{\mathcal{C}} = \langle 
  \mathcal{C}_\alpha \mid \alpha < \kappa \rangle$ is a $\square(\kappa, < \lambda)$-sequence if:
  \begin{enumerate}
    \item for all $\alpha < \kappa$, $\mathcal{C}_\alpha$ is a set of clubs in $\alpha$ with 
      $0 < |\mathcal{C}_\alpha| < \lambda$;
    \item for all $\alpha < \beta < \kappa$ and all $C \in \mathcal{C}_\beta$, if $\alpha \in \acc(C)$, then 
      $C \cap \alpha \in \mathcal{C}_\alpha$;
    \item there is no club $D \subseteq \kappa$ such that, for all $\alpha \in \acc(D)$, 
      $D \cap \alpha \in \mathcal{C}_\alpha$.
  \end{enumerate}
  $\square(\kappa, < \lambda)$ is the assertion that there is a $\square(\kappa, < \lambda)$-sequence. 
  $\square(\kappa, < \lambda^+)$ is typically denoted $\square(\kappa, \lambda)$, and $\square(\kappa, 1)$ 
  is typically denoted $\square(\kappa)$.
\end{definition}

There are many connections between square principles and higher Aronszajn trees. 
Todorcevic (see \cite{todorcevic}) proved that, if $\kappa$ is a regular, uncountable 
cardinal, then $\square(\kappa, < \kappa)$ implies the existence of a $\kappa$-Aronszajn tree. 
Earlier results of Jensen, Solovay, Gregory, and Shelah combine to show that, if 
$\mu$ is uncountable, then $\mathrm{GCH} + \square_\mu$ implies the existence of a 
$\mu^+$-Souslin tree. In addition, $\square^*_\mu$ is equivalent to the existence 
of a special $\mu^+$-tree.

The last player in our story is stationary reflection. Recall that, if $\kappa$ 
is a regular, uncountable cardinal, $S \subseteq \kappa$ is stationary, and 
$\alpha < \kappa$ is an ordinal of uncountable cofinality, then $S$ \emph{reflects at $\alpha$} 
if $S \cap \alpha$ is stationary in $\alpha$. $S$ \emph{reflects} if there is $\alpha < \kappa$ 
such that $S$ reflects at $\alpha$. $\mathrm{Refl}(S)$ is the assertion that, whenever 
$T \subseteq S$ is stationary, then $T$ reflects. If $\mathcal{S}$ is a collection of stationary 
subsets of $\kappa$ and $\alpha < \kappa$ has uncountable cofinality, we say $\mathcal{S}$ 
\emph{reflects simultaneously at $\alpha$} if, for every $T \in \mathcal{S}$, $T$ reflects 
at $\alpha$. We say $\mathcal{S}$ \emph{reflects simultaneously} if there is $\alpha < \kappa$ 
such that $\mathcal{S}$ reflects simultaneously at $\alpha$. If $\lambda$ is a cardinal and 
$S \subseteq \kappa$ is stationary, then $\mathrm{Refl}(<\lambda, S)$ is the assertion that, 
whenever $\mathcal{S}$ is a collection of stationary subsets of $S$ and $|\mathcal{S}| < \lambda$, 
$\mathcal{S}$ reflects simultaneously. As usual, $\mathrm{Refl}(<\lambda^+, S)$ will be denoted 
by $\mathrm{Refl}(\lambda, S)$.

Aronszajn trees and square sequences can be seen as instances of incompactness: $\kappa$-Aronszajn 
trees have branches of every length less than $\kappa$ but no branches of length $\kappa$, and 
square sequences of length $\kappa$ cannot be extended to have length $\kappa + 1$. 
Stationary reflection and strengthenings thereof, on the other hand, are manifestly compactness principles 
and are therefore at certain odds with the existence of Aronszajn trees and square sequences. For example, 
a folklore result states that $\square_\mu$ implies the failure of $\mathrm{Refl}(S)$ for every 
stationary $S \subseteq \mu^+$. Much work has been done investigating the extent to which certain 
compactness and incompactness principles can or cannot hold simultaneously; we continue this line of research 
here.

Our notation is, for the most part, standard. We use \cite{jech} as a reference for all undefined notions. 
If $A$ is a set of ordinals, we use $\acc(A)$ to refer to $\{\beta \in A \setminus \{0\} \mid \sup(A \cap \beta) = \beta\}$ 
and let $\nacc(A) = A \setminus \acc(A)$. If $\lambda < \kappa$ are cardinals and $\lambda$ is regular, 
then $S^\kappa_\lambda = \{\beta < \kappa \mid \cf(\beta) = \lambda\}$. $S^\kappa_{<\lambda}$, $S^\kappa_{\leq \lambda}$, etc. 
are defined in the obvious way.

\subsection{Souslin tree constructions}

There have been a vast array of constructions of $\kappa$-Souslin trees that have differed based 
on the identity of $\kappa$ and any additional properties desired of the constructed tree. In 
recent work (see \cite{brodsky_rinot}), Brodsky and Rinot unify these constructions under a single framework. 
In the process, they isolate certain strengthenings of $\square(\kappa, < \lambda)$ that incorporate guessing 
properties.

\begin{definition}[Brodsky-Rinot, \cite{brodsky_rinot}]
  Suppose $\kappa$ is a regular, uncountable cardinal, $\lambda, \theta > 1$ are cardinals, and $\mathcal{S}$ 
  is a non-empty collection of stationary subsets of $\kappa$. $\langle \mathcal{C}_\alpha \mid \alpha < \kappa \rangle$ 
  is a $\boxtimes^-_\theta(\mathcal{S}, < \lambda)$-sequence if:
  \begin{enumerate}
    \item for all $\alpha < \kappa$, $\mathcal{C}_\alpha$ is a set of clubs in $\alpha$ with $0 < |\mathcal{C}_\alpha| < \lambda$;
    \item for all $\alpha < \kappa$ and all $C \in \mathcal{C}_\alpha$, $\otp(C) \leq \theta$;
    \item for all $\alpha < \beta < \kappa$ and all $C \in \mathcal{C}_\beta$, if $\alpha \in \acc(C)$, then 
      $C \cap \alpha \in \mathcal{C}_\alpha$;
    \item for every cofinal $A \subseteq \kappa$ and every $S \in \mathcal{S}$, there is $\beta \in S$ 
      such that, for all $C \in \mathcal{C}_\beta$, $\sup(\nacc(C) \cap A) = \beta$.
  \end{enumerate}
  $\boxtimes^-_\theta(\mathcal{S}, < \lambda)$ holds if there is a $\boxtimes^-_\theta(\mathcal{S}, < \lambda)$-sequence. 
  If $\theta = \kappa$, then $\theta$ is omitted from the notation. If $S \subseteq \kappa$ is stationary, we write 
  $\boxtimes^-_\theta(S, < \lambda)$ instead of $\boxtimes^-_\theta(\{S\}, < \lambda)$. $\boxtimes^-_\theta(\mathcal{S}, \lambda)$ 
  and $\boxtimes^-_\theta(\mathcal{S})$ are defined in analogy with $\square(\kappa, \lambda)$ and $\square(\kappa)$.
\end{definition}

The following is proven in \cite{brodsky_rinot}.

\begin{proposition}
  Suppose $\kappa$ is a regular, uncountable cardinal, $\lambda > 1$ is a cardinal, and $\vec{C} = \langle \mathcal{C}_\alpha \mid \alpha < \kappa 
  \rangle$ is a $\boxtimes^-(\kappa, < \lambda)$-sequence. Then $\vec{C}$ is a $\square(\kappa, < \lambda)$-sequence.
\end{proposition}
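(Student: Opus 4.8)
The plan is to observe that conditions (1) and (2) of the definition of a $\square(\kappa, < \lambda)$-sequence are, respectively, conditions (1) and (3) of the definition of a $\boxtimes^-(\kappa, < \lambda)$-sequence, so they hold for $\vec{C}$ with no work at all; the order-type clause (2) of $\boxtimes^-$ (read with $\theta = \kappa$, as the omitted subscript dictates) is automatic, since every $C \in \mathcal{C}_\alpha$ satisfies $\otp(C) \leq \alpha < \kappa$, and it plays no further role. The entire content of the proposition is therefore to verify the anti-thread condition (3) of $\square(\kappa, < \lambda)$, and for this I would deploy clause (4) of $\boxtimes^-$.

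So suppose toward a contradiction that $\vec{C}$ admits a thread, i.e.\ a club $D \subseteq \kappa$ such that $D \cap \alpha \in \mathcal{C}_\alpha$ for every $\alpha \in \acc(D)$. The idea is to feed clause (4) a cofinal set $A$ that is guaranteed to avoid the $\nacc$ of every initial segment of the thread. I would take $A = \acc(D)$, which is a club in $\kappa$ (hence cofinal), and fix any $S \in \mathcal{S}$ (the collection being non-empty). Clause (4) then yields some $\beta \in S$ with $\sup(\nacc(C) \cap \acc(D)) = \beta$ for every $C \in \mathcal{C}_\beta$.

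First I would use this to pin $\beta$ onto the thread. Since $|\mathcal{C}_\beta| > 0$, pick any $C \in \mathcal{C}_\beta$; the equality $\sup(\nacc(C) \cap \acc(D)) = \beta$ exhibits points of $\acc(D) \subseteq D$ cofinal in $\beta$, so $\beta$ is a limit of elements of the club $D$, whence $\beta \in \acc(D)$. By threadedness this gives $D \cap \beta \in \mathcal{C}_\beta$. Now I would compute $\nacc(D \cap \beta)$: because $\beta \in \acc(D)$, membership of $\gamma < \beta$ in $\acc(D \cap \beta)$ depends only on $D$ below $\gamma$, so $\acc(D \cap \beta) = \acc(D) \cap \beta$ and hence $\nacc(D \cap \beta) = \nacc(D) \cap \beta$, which is disjoint from $\acc(D)$. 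Therefore $\sup(\nacc(D \cap \beta) \cap \acc(D)) = \sup(\emptyset) = 0 \neq \beta$, contradicting the requirement that the supremum equal $\beta$ for \emph{every} member of $\mathcal{C}_\beta$, in particular for $C = D \cap \beta$.

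There is no serious obstacle here; the one point that requires care is the double use of clause (4), once (for an arbitrary $C \in \mathcal{C}_\beta$) to force $\beta \in \acc(D)$ so that $D \cap \beta$ is genuinely guessed by $\mathcal{C}_\beta$, and once (for the specific $C = D \cap \beta$) to extract the contradiction. The only routine verification is the identity $\nacc(D \cap \beta) = \nacc(D) \cap \beta$ for $\beta \in \acc(D)$, which I would dispatch in a line from the observation just noted.
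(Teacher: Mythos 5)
Your proof is correct and follows essentially the same route as the paper's: take $A=\acc(D)$ for a putative thread $D$, apply clause (4) of $\boxtimes^-$ to get $\beta$ with $\sup(\nacc(C)\cap A)=\beta$ for all $C\in\mathcal{C}_\beta$, note $\beta\in\acc(D)$ so that $D\cap\beta\in\mathcal{C}_\beta$, and derive a contradiction from $\nacc(D\cap\beta)\cap\acc(D)=\emptyset$. Your write-up is somewhat more explicit than the paper's (in justifying $\beta\in\acc(D)$ and the identity $\nacc(D\cap\beta)=\nacc(D)\cap\beta$), but the argument is the same.
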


\begin{proof}
  Suppose for sake of contradiction that $D$ is a thread through $\vec{C}$. Let 
  $A = \acc(D)$. Since $\vec{C}$ is a $\boxtimes^-(\kappa, < \lambda)$-sequence, there is 
  $\beta < \kappa$ such that $\sup(\nacc(C) \cap A) = \beta$ for all $C \in \mathcal{C}_\beta$. Fix 
  such an $\beta$. Then $\beta \in \acc(D)$, so there is $C \in \mathcal{C}_\beta$ 
  such that $C = D \cap \beta$. But then 
  $\nacc(C) \cap A = \beta \cap \nacc(D) \cap \acc(D) = \emptyset$, which 
  is a contradiction.
\end{proof}

Brodsky and Rinot also introduce a further strengthening of $\boxtimes^-_\theta(\mathcal{S}, < \lambda)$. Before we give its 
definition, we need some notation.

\begin{definition}[Brodsky-Rinot, \cite{brodsky_rinot}]
  Suppose $D$ is a set of ordinals and $\sigma$ is an ordinal. $\mathrm{succ}_\sigma(D)$ 
  is, intuitively, the set of the first $\sigma$ ``successor elements'' of $D$. More 
  precisely, $\mathrm{succ}_\sigma(D) = \{\delta \in D \mid$ for some $j < \sigma, 
  \otp(D \cap \delta) = j+1\}$.
\end{definition}

\begin{definition}[Brodsky-Rinot, \cite{brodsky_rinot}]
  Suppose $\kappa$ is a regular, uncountable cardinal, $\lambda, \theta > 1$ are cardinals, and $\mathcal{S}$ 
  is a non-empty collection of stationary subsets of $\kappa$. 
  $\langle \mathcal{C}_\alpha \mid \alpha < \kappa \rangle$ is a $\boxtimes_\theta(\mathcal{S}, < \lambda)$-sequence if:
  \begin{enumerate}
    \item for all $\alpha < \kappa$, $\mathcal{C}_\alpha$ is a set of clubs in $\alpha$ 
      with $0 < |\mathcal{C}_\alpha| < \lambda$;
    \item for all $\alpha < \kappa$ and all $C \in \mathcal{C}_\alpha$, $\otp(C) \leq \theta$;
    \item for all $\alpha < \beta < \kappa$ and all $C \in \mathcal{C}_\beta$, if $\alpha \in \acc(C)$, then 
      $C \cap \alpha \in \mathcal{C}_\alpha$;
    \item for every sequence $\langle A_i \mid i < \kappa \rangle$ of cofinal subsets 
      of $\kappa$ and every $S \in \mathcal{S}$, there is $\beta \in S$ such that, for all $C \in \mathcal{C}_\beta$ and all $i < \beta$, 
      $\sup(\{\alpha < \beta \mid \mathrm{succ}_\omega(C \setminus \alpha) \subseteq A_i\}) 
      = \beta$.
  \end{enumerate}
  As before, we omit $\theta$ if $\theta = \kappa$ and write $\boxtimes_\theta(S, < \lambda)$ instead of 
  $\boxtimes_\theta(\{S\}, < \lambda)$. $\boxtimes(S, \lambda)$ and $\boxtimes(S)$ are defined in the obvious way.
\end{definition}

\begin{remark}
  In clause (4) of the definitions of $\boxtimes^-_\theta(\mathcal{S}, < \lambda)$ and $\boxtimes_\theta(\mathcal{S}, < \lambda)$, the 
  existence of a single $\beta \in S$ is easily seen to be equivalent to the existence of stationarily 
  many such $\beta \in S$. 
\end{remark}

Brodsky and Rinot use these principles, together with $\diamondsuit(\kappa)$ (which 
follows from $\mathrm{GCH}$ for successor cardinals $\kappa > \omega_1$) to construct $\kappa$-Souslin trees 
with various additional properties. 

\begin{theorem}[Brodsky-Rinot, \cite{brodsky_rinot_two}, \cite{brodsky_rinot}, \cite{brodsky_rinot_reduced}, respectively] \label{brodsky_rinot_thm}
  Suppose $\kappa$ is a regular, uncountable cardinal.
  \begin{enumerate}
    \item $\boxtimes^-(\kappa, < \kappa) + \diamondsuit(\kappa)$ implies the existence of a $\kappa$-Souslin tree.
    \item $\boxtimes^-(\mathrm{NS}_\kappa^+) + \diamondsuit(\kappa)$ implies the existence of a coherent 
      $\kappa$-Souslin tree (see \cite{brodsky_rinot} for the definition of coherence in this setting).
    \item $\boxtimes(\kappa) + \diamondsuit(\kappa)$ implies the existence of a $\kappa$-Souslin tree 
      that contains a $\lambda$-ascent path for every infinite cardinal $\lambda < \kappa$ (see 
      Definition \ref{ascending_path_def} for the definition of a $\lambda$-ascent path).
  \end{enumerate}
\end{theorem}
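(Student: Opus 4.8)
The plan is to reconstruct, uniformly across the three clauses, the Brodsky--Rinot ``microscopic'' construction of a Souslin tree as a downward-closed, normal, splitting subtree $T$ of $({}^{<\kappa}\kappa, \subseteq)$, built by recursion on levels so that $|T_\alpha| < \kappa$ for every $\alpha < \kappa$. Throughout I would fix the given $\boxtimes^-(\kappa, <\kappa)$-sequence (resp.\ $\boxtimes^-(\mathrm{NS}_\kappa^+)$-, $\boxtimes(\kappa)$-sequence) $\vec{\mathcal{C}} = \langle \mathcal{C}_\alpha \mid \alpha < \kappa \rangle$ together with a $\diamondsuit(\kappa)$-sequence that, under a fixed coding, guesses at each level $\alpha$ a subset $Z_\alpha$ of the tree built below level $\alpha$. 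At successor stages I would split every node, which guarantees normality and rules out a cofinal branch in the presence of small antichains, so that all of the work is concentrated at limit levels.

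At a limit level $\alpha$, I would use coherence (clause (3)) to pin down a small family of branches eligible to be continued to level $\alpha$. For each $C \in \mathcal{C}_\alpha$ and each ``seed'' $x \in T_{<\min(C)}$, I would build by recursion a single branch $b^C_x$ through $T \restriction C$, insisting that the definition commute with initial segments: whenever $\bar\alpha \in \acc(C)$, coherence yields $C \cap \bar\alpha \in \mathcal{C}_{\bar\alpha}$, and I would require $b^{C \cap \bar\alpha}_x = b^C_x \restriction \bar\alpha$. This coherence is exactly what bounds the family of new nodes, and hence $|T_\alpha|$, by $|\mathcal{C}_\alpha| \cdot |T_{<\alpha}| < \kappa$, keeping $T$ a $\kappa$-tree.

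The core of the argument is Souslinity, which I would obtain by sealing antichains at $\diamondsuit$-guessed levels using clause (4). A standard $\diamondsuit$-reflection argument shows that, for any putative maximal antichain, there are stationarily many $\beta$ at which $Z_\beta$ is a maximal antichain of $T_{<\beta}$; I would arrange the construction so that at each such $\beta$ every node of $T_\beta$ lies above an element of $Z_\beta$, which forces every maximal antichain to have size $< \kappa$ and hence makes $T$ Souslin. Clause (4) is precisely what makes the sealing possible: taking $A \subseteq \kappa$ to be the cofinal set of heights at which the construction already offers a node extending the guessed antichain, clause (4) delivers a level $\beta$ with $\sup(\nacc(C) \cap A) = \beta$ for \emph{every} $C \in \mathcal{C}_\beta$ at once, so each canonical branch $b^C_x$ can be steered, at nacc-points of $C$ lying in $A$ cofinally below $\beta$, onto an element of the guessed antichain. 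I expect this to be the main obstacle, since one must interleave the coherent definition of the $b^C_x$ with the simultaneous sealing demand for all $C \in \mathcal{C}_\beta$, and then check that the tree produced is still coherent enough for the recursion to proceed.

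For the strengthenings I would keep the same skeleton and exploit the stronger guessing hypotheses. For (2), running the construction against $\mathcal{S} = \mathrm{NS}_\kappa^+$, so that the guessing fires relative to every stationary set, supplies enough uniformity to define the branch-continuation so that any two nodes of equal height agree off a bounded set, yielding a coherent Souslin tree. For (3), I would replace clause (4) by its vector form with $\mathrm{succ}_\omega$ provided by $\boxtimes(\kappa)$: given sequences $\langle A_i \mid i < \kappa \rangle$ coding, for each infinite $\lambda < \kappa$, the $\lambda$ parallel threads of a prospective $\lambda$-ascent path, clause (4) lets me steer all $\lambda$ threads through compatible $\omega$-blocks of $\nacc(C)$ simultaneously while still sealing antichains, which is exactly what is needed to build, alongside the Souslin tree, a $\lambda$-ascent path for every infinite $\lambda < \kappa$.
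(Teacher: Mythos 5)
This theorem is not proved in the paper at all: it is quoted from Brodsky and Rinot's work and supported only by citations, so there is no in-paper argument to compare your proposal against. Judged on its own, your sketch is a recognizable outline of the Brodsky--Rinot ``microscopic'' construction (canonical branches $b^C_x$ climbing along the clubs of $\vec{\mathcal{C}}$, coherence bounding $|T_\alpha|$ by $|\mathcal{C}_\alpha|\cdot|T_{<\alpha}|$, and sealing of $\diamondsuit$-guessed antichains at levels $\beta$ supplied by clause (4)). But it is an outline, not a proof, and the step you yourself identify as the main obstacle --- interleaving the coherent recursion $b^{C\cap\bar\alpha}_x = b^C_x\restriction\bar\alpha$ with the demand that, at a guessed level $\beta$, \emph{every} node of $T_\beta$ (i.e.\ the limit of $b^C_x$ for every $C\in\mathcal{C}_\beta$ and every seed $x$) sit above the guessed antichain --- is exactly where all the work lives and is not carried out. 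Note also that the set $A$ fed into clause (4) should be chosen as the (stationary, hence cofinal) set of levels $\delta$ at which the $\diamondsuit$-sequence correctly guesses $\mathcal{A}\cap T_{<\delta}$ as a predense subset of $T_{<\delta}$, so that the branch-extension rule applied at points of $\nacc(C)\cap A$ actually steps above elements of $\mathcal{A}$; your description of $A$ is close but garbled.

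The one point where your sketch is substantively off rather than merely incomplete is clause (2). You claim that guessing relative to every stationary set ``supplies enough uniformity to define the branch-continuation so that any two nodes of equal height agree off a bounded set.'' The causality is reversed: coherence of the tree is imposed structurally by the construction (one builds inside a coherent family, so that all nodes at level $\alpha$ are determined modulo a bounded set by a single one), and the \emph{cost} of that rigidity is that a given antichain can only be sealed at levels lying in some set that is merely stationary, not club. That is why the hypothesis must be strengthened from $\boxtimes^-(\kappa)$ to $\boxtimes^-(\mathrm{NS}_\kappa^+)$: the guessing must fire inside an arbitrary prescribed stationary set of candidate sealing levels. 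As written, your argument for (2) would not go through. For (3), your account of the vector form with $\mathrm{succ}_\omega$ is in the right direction but similarly stops short of explaining how the $\lambda$ parallel threads of the ascent path are actually produced and verified.
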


One salient difference between the constructions of Brodsky and 
Rinot and previous constructions is that the new constructions make no explicit use of a 
non-reflecting stationary subset of $\kappa$, while all known previous $\diamondsuit$-based constructions do. This led 
Rinot to ask the following question.

\begin{question}[Rinot, \cite{assaf}] \label{assaf_question}
  Let $\kappa$ be a regular, uncountable cardinal. Is $\mathrm{GCH} + \boxtimes^{-}(\kappa) + \mathrm{Refl}(\kappa)$ 
  consistent? What about $\mathrm{GCH} + \boxtimes(\kappa) + \mathrm{Refl}(\kappa)$?
\end{question}

Here, we answer the first question in the affirmative and the second question in the negative. Upon 
learning of our affirmative answer, Rinot asked if, in case $\kappa = \mu^+$ where $\mu$ is a 
singular cardinal, we can also 
arrange for $\square^*_\mu$ to hold. This is of interest for two reasons. First, the presence of 
$\square^*_\mu$ allows one to draw stronger consequences from $\boxtimes^-(\kappa)$ (see \cite{brodsky_rinot_two}). 
Second, by a result from \cite{rinot_approachability}, if $\mu$ is strong limit, $2^\mu = \mu^+$, 
and $\square^*_\mu$ holds, then forcing to add a Cohen subset of $\mu^+$ necessarily adds a non-reflecting 
stationary set. Therefore, if $\square^*_\mu$ were to also hold in a model of $\mathrm{GCH} + \boxtimes^-(\kappa) + 
\mathrm{Refl}(\kappa)$, the stationary reflection would be quite fragile.
We answer this affirmatively as well. In particular, 
we prove the following theorems.

\begin{theorem} \label{con_thm}
  Assuming the consistency of certain large cardinals, the following are consistent:
  \begin{enumerate}
    \item $\mathrm{GCH} + \boxtimes^{-}(\mathrm{NS}^+_{\aleph_{\omega + 1}}) 
      + \boxtimes^-_{\aleph_\omega}(\{S^{\aleph_{\omega+1}}_{\aleph_n} \mid n < \omega\}, \omega) + \mathrm{Refl}(\aleph_{\omega + 1})$;
    \item $\mathrm{GCH} + \boxtimes^-(\mathrm{NS}^+_{\aleph_2}) + 
      \mathrm{Refl}(S^{\aleph_2}_{\aleph_0})$;
    \item $\mathrm{GCH} + \diamondsuit(\kappa) + \boxtimes^-(\mathrm{NS}^+_\kappa) + \mathrm{Refl}(\kappa)$, 
      where $\kappa$ is the least inaccessible cardinal.
  \end{enumerate}
\end{theorem}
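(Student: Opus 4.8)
The plan is to obtain each of the three models as a forcing extension of a ground model carrying a large cardinal, where the large cardinal is solely responsible for the stationary reflection and an auxiliary forcing $\mathbb{S}$ generically introduces the desired $\boxtimes^-$-sequence(s). The reason this escapes the folklore incompatibility of $\square_\mu$ with reflection is that $\boxtimes^-(\mathrm{NS}^+_\kappa)$ asks only for a guessing sequence and never demands a non-reflecting stationary set; the reflection will therefore be transferred to the extension by lifting the relevant large-cardinal embedding through $\mathbb{S}$, using a companion \emph{threading} forcing to supply the necessary master condition. All forcings are chosen to be $\kappa^+$-cc of size $\kappa$ (computed under GCH), so that cardinals and GCH are preserved.

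First I would isolate the forcing $\mathbb{S} = \mathbb{S}(\kappa)$ adding a $\boxtimes^-(\mathrm{NS}^+_\kappa)$-sequence: a condition is an approximation $\langle \mathcal{C}_\alpha \mid \alpha \le \gamma\rangle$ with $\gamma < \kappa$ satisfying clauses (1)--(3) of the definition, ordered by coherent end-extension. The standard strategy for the closure game, in which player II maintains a single descending thread and, at each limit of the play, inserts that thread into the $\mathcal{C}$ attached to its supremum, shows $\mathbb{S}$ is $<\kappa$-strategically closed, hence $<\kappa$-distributive; thus $\mathbb{S}$ adds no bounded subsets of $\kappa$ and preserves cardinals and GCH below $\kappa$. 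To verify that the generic sequence $\vec{\mathcal{C}}$ meets clause (4), I would fix a name $\dot A$ for a cofinal subset of $\kappa$ and a stationary $S$ and show the set of witnessing $\beta \in S$ is stationary: given a condition, build a descending sequence of conditions, at each step extending the common thread by an ordinal forced into $\dot A$ so that $A$ accumulates among the non-accumulation points of the thread, using strategic closure to pass through limits and tuning the length so that the supremum $\beta$ lands in $S$; coherence then propagates $\sup(\nacc(C)\cap A)=\beta$ from the freshly placed club to every $C \in \mathcal{C}_\beta$. Genericity over all such $\dot A$ yields $\boxtimes^-(\mathrm{NS}^+_\kappa)$. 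The refinement $\boxtimes^-_{\aleph_\omega}(\{S^{\aleph_{\omega+1}}_{\aleph_n}\mid n<\omega\},\omega)$ needed for (1) is produced by the same forcing with the order-type bound $\theta = \aleph_\omega$ and the cardinality bound $<\aleph_1$ built into the conditions and the guessing aimed at the prescribed cofinalities.

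The heart of the matter, and the step I expect to be the main obstacle, is showing that $\mathbb{S}$ forces stationary reflection. I would argue this by lifting the ground-model embedding. Taking case (3) as the template, let $\kappa$ be weakly compact in $V$, let $\dot T$ be an $\mathbb{S}$-name, and fix a weakly compact embedding $j : M \to N$ with critical point $\kappa$, where $M$ is transitive, closed under $<\kappa$-sequences, of size $\kappa$, and contains $\mathbb{S}$, $\dot T$, and $H(\kappa)$. Lifting $j$ through $\mathbb{S}$ requires a condition in $j(\mathbb{S})$ of height $\kappa$ extending $G_{\mathbb{S}}$, that is, a club threading $\vec{\mathcal{C}}$; no such thread exists in $V^{\mathbb{S}}$, precisely because $\vec{\mathcal{C}}$ is a $\square(\kappa)$-sequence. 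This is resolved by the companion threading forcing $\mathbb{T}$, whose conditions are bounded closed threads through $\vec{\mathcal{C}}$: in $V^{\mathbb{S}}[h]$, with $h$ being $\mathbb{T}$-generic, a thread $D$ appears, $\vec{\mathcal{C}}^\frown\langle\{D\}\rangle$ is the desired master condition, and the remainder of an $N$-generic $\hat G$ for $j(\mathbb{S})$ below it is assembled by following $N$'s strategy against the at most $\kappa$ many dense sets lying in $N$, using that $j(\mathbb{S})$ is $<j(\kappa)$-strategically closed. The lift $\hat\jmath : M[G_{\mathbb{S}}] \to N[\hat G]$ thus lives in $V^{\mathbb{S}}[h]$, but the conclusion concerns $V^{\mathbb{S}}$ and is recovered by absoluteness: since $N \subseteq V$, every club of $N$ in $\kappa$ is genuine, so a $T$ that is stationary in $V^{\mathbb{S}}$ meets all of them, and since $j(\mathbb{S})$ adds no subset of $\kappa$ over $N$, we obtain $N[\hat G] \models\ ``j(T)\cap\kappa = T$ is stationary''. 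Hence $N[\hat G]$ sees $j(T)$ reflect at $\kappa$, and elementarity together with the agreement of $M[G_{\mathbb{S}}]$ and $V^{\mathbb{S}}$ on $H(\kappa)$ returns a reflection point for $T$ below $\kappa$ in $V^{\mathbb{S}}$. The crucial feature is that the thread is used only to build the lift in the outer model $V^{\mathbb{S}}[h]$, so no preservation of $T$'s stationarity under $\mathbb{T}$ is needed.

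It remains to instantiate the three cases. For (3) one must additionally arrange, before adding $\mathbb{S}$, that $\kappa$ is the least inaccessible and that $\diamondsuit(\kappa)$ holds: the former is achieved by an Easton-style iteration collapsing the inaccessibles below $\kappa$ while preserving weak compactness of $\kappa$, and $\diamondsuit(\kappa)$ is then forced by a $<\kappa$-closed forcing or read off from $\mathbb{S}$ itself; the resulting $\square(\kappa)$-sequence automatically destroys the weak compactness of $\kappa$, which is consistent precisely because it is reflection, not weak compactness, that survives. For (2) one starts with a weakly compact $\lambda$ and forces with $\mathrm{Coll}(\aleph_1,<\lambda) * \dot{\mathbb{S}}$, so that $\lambda = \aleph_2$ in the extension; the Harrington--Shelah analysis of the Levy collapse supplies an embedding whose lift, now through both the collapse and $\dot{\mathbb{S}}$, yields $\mathrm{Refl}(S^{\aleph_2}_{\aleph_0})$ by the argument above. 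For (1), the heaviest case, one uses $\omega$ supercompact cardinals as in Magidor's model for reflection at $\aleph_{\omega+1}$, collapsing them to become the $\aleph_n$ and lifting the supercompactness embeddings through the collapse and through both square forcings; the main additional work is to carry the order-type-bounded refinement $\boxtimes^-_{\aleph_\omega}(\ldots)$ and its threading through the collapse while coordinating with the reflection of subsets of each $S^{\aleph_{\omega+1}}_{\aleph_n}$. In every case the delicate point is the simultaneous lifting of the embedding through the square forcing via the threading master condition, which is exactly what reconciles the incompactness of $\boxtimes^-$ with the compactness of stationary reflection.
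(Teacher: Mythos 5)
There is a genuine gap, and it is located exactly where you flagged the ``crucial feature'': the claim that no preservation of $T$'s stationarity under the threading forcing $\bb{T}$ is needed is false, and without it the proposed model simply does not satisfy the reflection principle. To see that $j(T)$ reflects at $\kappa$ in $N[\hat G]$, you must show that $j(T)\cap\kappa = T$ is stationary \emph{in $N[\hat G]$}; but $N[\hat G]$ contains the thread $D$ (it is read off from the master condition), hence contains clubs of $\kappa$ that do not exist in $V^{\bb{S}}$. Concretely, let $T^* = \{\alpha \in S^\kappa_\omega \mid \acc(\mathcal{C}_\alpha) \text{ is bounded below } \alpha\}$ for the generic $\square(\kappa)$-sequence $\vec{\mathcal{C}}$. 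An easy genericity argument shows $T^*$ is stationary in $V^{\bb{S}}$, yet $\acc(\acc(D))$ is a club disjoint from $T^*$, so $T^*$ is non-stationary in $N[\hat G]$ and the lift proves nothing about it. Worse, $T^*$ genuinely does not reflect at any point of uncountable cofinality (for $\beta \in S^\kappa_{>\omega}$, the club $\acc(\acc(\mathcal{C}_\beta))$ is disjoint from $T^*$ --- this is the Claim inside the proof of Theorem \ref{incon_thm_2}), so $\mathrm{Refl}(S^\kappa_\omega)$, and a fortiori $\mathrm{Refl}(\kappa)$, \emph{fails} in $V^{\bb{S}}$. Your proposed final model for case (3) is $V^{\bb{S}}$ (after preparation), so the theorem is false of it.

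The missing idea is an additional iteration that reconciles the square forcing with reflection: call $S \subseteq \kappa$ \emph{fragile} if every threading poset forces $S$ to be non-stationary, and follow $\bb{S}$ with a $\kappa^+$-length iteration $\bb{R}$ of club-shooting posets that destroys the stationarity of every fragile set (the set $T^*$ above is fragile and is thereby killed). One must then prove that $\bb{S} * \dot{\bb{R}} * \dot{\bb{T}}$ still has a dense $\kappa$-directed closed subset --- this is the technical heart of the paper's argument (Lemma \ref{dense_closed_2}), and it is delicate because the club-shooting coordinates must be anticipated by names $\dot E_{\xi,i}$ for clubs avoiding the fragile sets in the threaded extensions. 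In the resulting model every stationary set is non-fragile, hence remains stationary after some threading $\bb{T}(i)$; \emph{that} extension is a $\kappa$-directed closed (in case (1), $\aleph_{\ell+1}$-directed closed) extension of the prepared ground model, so reflection holds there by the indestructibility of the large cardinal, and it transfers back down since cofinalities are unchanged. The same fragility machinery is what verifies clause (4) of $\boxtimes^-(\mathrm{NS}^+_\kappa)$: your density argument can produce stationarily many guessing points $\beta$ for a given cofinal $A$, but $\mathrm{NS}^+_\kappa$ requires the set of non-guessing $\beta$ to be non-stationary with respect to every stationary set of the final model, and the paper gets this by applying Fodor to a putative counterexample $S_0$, noting $S_0$ is not fragile, and then exhibiting a thread that kills $S_0$ --- a contradiction. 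Finally, for case (1) your sketch omits the approachability ingredient ($\mathrm{AP}_{\aleph_\omega}$ and Fact \ref{ap_fact}), which is what keeps the relevant stationary sets alive through the closed forcing on the target side of the lifted supercompactness embedding.
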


\begin{remark} \label{remark_1}
  We also get the relevant instance of $\diamondsuit$ in (1) and (2) of Theorem \ref{con_thm}, as it follows there from 
  GCH. In addition, there is nothing special about $\aleph_{\omega + 1}$, 
  $\aleph_2$, or the least inaccessible in Theorem \ref{con_thm}. 
  They are used for concreteness only, and self-evident modifications of the proof will allow the reader to 
  obtain similar results for other successors of singular cardinals, successors of regular cardinals, and 
  inaccessible cardinals, respectively.

  In addition, we will obtain similar results about the consistency of wider $\boxtimes$ sequences together with 
  simultaneous stationary reflection, indicating that the existence of a $\kappa$-Souslin tree is in 
  general compatible with a high degree of simultaneous stationary reflection.
\end{remark}

\begin{theorem} \label{incon_thm}
  Suppose $\kappa$ is a regular, uncountable cardinal and $\boxtimes(\kappa)$ holds. Then 
  $\mathrm{Refl}(S^\kappa_\omega)$ fails.
\end{theorem}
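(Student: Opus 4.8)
The plan is to build directly a stationary set $T \subseteq S^\kappa_\omega$ that reflects at no ordinal of uncountable cofinality, which immediately refutes $\mathrm{Refl}(S^\kappa_\omega)$. I would fix a $\boxtimes(\kappa)$-sequence and write $C_\alpha$ for the unique element of $\mathcal{C}_\alpha$ (a singleton, since $\lambda = 1$); the two features I would use are coherence (clause (3)), which gives $C_\delta = C_\alpha \cap \delta$ whenever $\delta \in \acc(C_\alpha)$, and the guessing clause (4) with $S = \kappa$. I may assume $\kappa > \omega_1$, since for $\kappa = \omega_1$ there are no candidate reflection points and the conclusion is trivial.

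The device driving non-reflection would be a $2$-colouring splitting every relevant local club. Define $d \colon \kappa \to 2$ by $d(\delta) = 1$ iff $\sup(\acc(C_\delta)) = \delta$, and set $T = \{\gamma \in S^\kappa_\omega \mid d \restriction C_\gamma \text{ is eventually constant}\}$. The key observation is that for every $\alpha \in S^\kappa_{>\omega}$ the map $d \restriction C_\alpha$ is \emph{not} eventually constant: coherence gives $\acc(C_\delta) = \acc(C_\alpha) \cap \delta$ for $\delta \in \acc(C_\alpha)$, so $d(\delta) = 1$ precisely when $\delta \in \acc(\acc(C_\alpha))$, and since $\cf(\alpha) > \omega$ both $\acc(\acc(C_\alpha))$ and $\acc(C_\alpha) \setminus \acc(\acc(C_\alpha))$ are cofinal in $\alpha$. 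To see that $T$ does not reflect at a given $\alpha^* \in S^\kappa_{>\omega}$, I would assume $T \cap \alpha^*$ stationary; as $\acc(C_{\alpha^*})$ is club, $T \cap \acc(C_{\alpha^*})$ is stationary, and for each $\delta$ in it coherence makes $d \restriction (C_{\alpha^*} \cap \delta)$ eventually constant. Pressing down the least point of $C_{\alpha^*}$ beyond which this colouring stabilizes, Fodor's lemma would return a single threshold $\xi_0$ working on a stationary set, and nesting then forces $d$ to be constant on $C_{\alpha^*} \setminus \xi_0$ --- contradicting the previous sentence.

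For stationarity of $T$, I would turn to clause (4). Observe that $A_1 := d^{-1}(1) \supseteq S^\kappa_{\omega_1}$ is stationary. Given a club $E$, I would apply clause (4) to a sequence of cofinal subsets of $\kappa$ whose first entry is $A_1 \cap E$, obtaining $\beta > 0$ and $\alpha < \beta$ with $\mathrm{succ}_\omega(C_\beta \setminus \alpha) \subseteq A_1 \cap E$. Putting $\gamma = \sup(\mathrm{succ}_\omega(C_\beta \setminus \alpha))$, one checks that $\cf(\gamma) = \omega$, that $C_\gamma = C_\beta \cap \gamma$ has its final $\omega$-block contained in $A_1$, whence $d \restriction C_\gamma$ is eventually constant, and that $\gamma \in E$ (as $E$ is closed and the block is cofinal in $\gamma$); hence $\gamma \in T \cap E$.

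I expect the crux to be the isolation of the colouring $d$. It must be split along every $C_{\alpha^*}$ of uncountable cofinality --- which the second-order accumulation set $\acc(\acc(C_{\alpha^*}))$ supplies through coherence --- while being guessable by clause (4) into a single colour class along a final $\mathrm{succ}_\omega$-block; these pull in opposite directions (constancy on long blocks versus oscillation along every club), and the definition of $d$ above is precisely what reconciles them. Note that neither ingredient alone suffices: coherence is already present in $\square(\kappa)$, which is consistent with $\mathrm{Refl}(S^\kappa_\omega)$, so the guessing is essential exactly for the stationarity step. The remaining steps are then routine verifications.
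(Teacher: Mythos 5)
Your proof is correct, but it runs in the opposite direction from the paper's and uses a different witnessing set. The paper proves the sharper Theorem~\ref{incon_thm_2}: from $\mathrm{Refl}(S^\kappa_\omega)$ and \emph{any} $\square(\kappa)$-sequence it extracts a single club $E$ with $\mathrm{succ}_\omega(C_\beta \setminus \alpha) \not\subseteq E$ for all $\alpha < \beta < \kappa$, so clause (4) fails outright; its set is $T = \{\alpha \in S^\kappa_\omega \mid \acc(C_\alpha) \text{ bounded in } \alpha\}$, whose non-reflection is immediate from the club $\acc(\acc(C_\beta))$, and which $\mathrm{Refl}(S^\kappa_\omega)$ forces to be non-stationary. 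You instead argue directly, introducing the colouring $d$ and the set of $\gamma \in S^\kappa_\omega$ where $d \restriction C_\gamma$ stabilizes, killing reflection by a pressing-down argument against the oscillation of $d$ along $\acc(C_{\alpha^*})$, and getting stationarity from clause (4). The combinatorial core is shared --- $\acc(\acc(C_{\alpha^*}))$ as the club defeating reflection, and $\gamma = \sup(\mathrm{succ}_\omega(C_\beta \setminus \alpha))$ as the $\omega$-cofinal point with a controlled tail --- but your set is genuinely different from the paper's (neither contains the other), and your route costs an extra Fodor step that the paper's simpler set avoids: for the paper's $T$, the tail of $C_\gamma$ consisting of $\nacc$-points already yields $\gamma \in T$ with no colouring needed. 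What you gain is a self-contained direct construction of a non-reflecting stationary subset of $S^\kappa_\omega$; what the paper gains is the stronger conclusion about arbitrary $\square(\kappa)$-sequences. The details you defer do check out, including the degenerate case $\gamma = \beta$, where $C_\beta \cap \gamma = C_\beta$ anyway.
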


However, we will show that $\boxtimes(\kappa)$ is consistent with $\mathrm{Refl}(\kappa \setminus S)$ 
for some non-reflecting stationary $S \subseteq S^\kappa_\omega$.

A small note is in order here. Very soon after the author proved Theorem \ref{con_thm}, Rinot 
proved in \cite{rinot} that, if $\kappa \geq \aleph_2$ is a successor cardinal, then $\mathrm{GCH} + \square(\kappa)$ 
actually implies $\boxtimes^-(\kappa)$, in which case the positive answer to the first part of 
Question \ref{assaf_question} follows directly from work of Hayut and the author in \cite{hayut_lh}. 
Our results here remain of interest, though, in that they provide a strengthened version of 
$\boxtimes^-(\kappa)$ (namely $\boxtimes^-(\mathrm{NS}^+_\kappa)$) and also work for inaccessible values of $\kappa$.

\subsection{Ascending paths in trees}

The following definition is a generalization of the notion of a cofinal branch through a tree.

\begin{definition} \label{ascending_path_def}
  Suppose $T$ is a tree and $\height(T) = \kappa$. Let $\lambda > 0$ be a cardinal.
  \begin{enumerate}
    \item A sequence $\langle b_\gamma:\lambda \rightarrow T_\gamma \mid \gamma < \kappa \rangle$ 
      is an \emph{ascending path of width $\lambda$ through $T$} if, for all 
      $\alpha < \beta < \kappa$, there are $\eta, \xi < \lambda$ such that 
      $b_\alpha(\eta) <_T b_\beta(\xi)$. 
    \item A sequence $\langle b_\gamma:\lambda \rightarrow T_\gamma 
      \mid \gamma < \kappa \rangle$ is a \emph{$\lambda$-ascent path through $T$} if, 
      for all $\alpha < \beta < \kappa$, there is $\eta < \lambda$ such that, 
      for all $\eta \leq \xi < \lambda$, $b_\alpha(\xi) <_T b_\beta(\xi)$.
  \end{enumerate}
\end{definition}

It is clear that a $\lambda$-ascent path is also an ascending path of width 
$\lambda$. The notion of a $\lambda$-ascent path is due to Laver and stems from his work in 
\cite{laver_shelah}. 

One of the reasons special trees are of interest is that they are robustly branchless, i.e. 
if $\kappa$ is a regular, uncountable cardinal and $T$ is a special tree of height $\kappa$, 
then $T$ fails to have a cofinal branch in any outer model in which $\kappa$ remains a 
regular cardinal. Extending this idea, Shelah, building on work of Laver and Todorcevic, 
proved that the existence of certain ascent paths also precludes a tree from being special.

\begin{theorem} [Shelah, \cite{shelah_stanley}]
  Suppose $\lambda < \mu$ are infinite cardinals such that $\lambda$ is regular 
  and $\cf(\mu) \neq \lambda$. Suppose $T$ is a tree of height $\mu^+$ and 
  $T$ has a $\lambda$-ascent path. Then $T$ is not special.
\end{theorem}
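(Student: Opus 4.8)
The plan is to argue by contradiction. Suppose $T$ is special, witnessed by $f : T \to \mu$ that is injective on chains of $T$, and fix a $\lambda$-ascent path $\langle b_\gamma : \lambda \to T_\gamma \mid \gamma < \mu^+ \rangle$. For $\alpha < \beta < \mu^+$ let $\eta(\alpha,\beta) < \lambda$ be least such that $b_\alpha(\xi) <_T b_\beta(\xi)$ for all $\xi \in [\eta(\alpha,\beta), \lambda)$, which exists by the definition of an ascent path. Using this I would manufacture, for each $\zeta < \lambda$, a partial order $\hat\prec_\zeta$ on $\mu^+$ by setting $\alpha \hat\prec_\zeta \beta$ iff $\alpha < \beta$ and $\eta(\alpha,\beta) \leq \zeta$. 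Three observations drive everything. First, each $\hat\prec_\zeta$ is a \emph{tree order} refining the usual order on $\mu^+$: transitivity is immediate, and if $\alpha, \alpha' \hat\prec_\zeta \beta$ then $b_\alpha(\zeta), b_{\alpha'}(\zeta) <_T b_\beta(\zeta)$ both lie in the linearly ordered set $\pred_T(b_\beta(\zeta))$, so $\alpha$ and $\alpha'$ are $\hat\prec_\zeta$-comparable. Second, the $\hat\prec_\zeta$ are increasing in $\zeta$ with $\bigcup_{\zeta<\lambda} \hat\prec_\zeta$ equal to the full order on $\mu^+$ (every pair is caught once $\zeta \geq \eta(\alpha,\beta)$). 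Third, $h_\zeta := f \circ (\beta \mapsto b_\beta(\zeta))$ is injective on every $\hat\prec_\zeta$-chain, since the image of such a chain under $\beta \mapsto b_\beta(\zeta)$ is a $<_T$-chain; thus each tree $(\mu^+, \hat\prec_\zeta)$ is special, and in particular has no chain of size $\mu^+$.

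The engine of the proof is the following elementary remark about this configuration. If $\delta < \mu^+$ has $\cf(\delta) > \lambda$, then the sets $\{\alpha < \delta \mid \alpha \hat\prec_\zeta \delta\}$ are $\hat\prec_\zeta$-chains, increasing in $\zeta$, whose union is all of $\delta$; as $\delta$ is a union of $\lambda < \cf(\delta)$ of them, some single $\zeta$ yields a $\hat\prec_\zeta$-chain cofinal in $\delta$, hence of size $\cf(\delta)$. Consequently, if one can trap a set $X$ of size $\mu^+$ on which all of the specializing functions $h_\zeta$ have range bounded by a fixed $\bar\nu < \mu$, a contradiction follows: every $\hat\prec_\zeta$-chain inside $X$ then has size at most $\bar\nu$, yet applying the engine, relativized to $X$ (which has order type $\mu^+$), at the $\bar\nu^+$-th element of $X$ produces a $\hat\prec_\zeta$-chain inside $X$ of size $\bar\nu^+ > \bar\nu$. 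So the whole theorem reduces to producing such an $X$, and this is exactly where the hypothesis $\cf(\mu) \neq \lambda$ is spent.

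When $\cf(\mu) > \lambda$ this is immediate. For each $\beta$ set $s(\beta) = \sup_{\zeta < \lambda} h_\zeta(\beta)$; since $s(\beta)$ is a supremum of $\lambda$ many ordinals below $\mu$ and $\lambda < \cf(\mu)$, we get $s(\beta) < \mu$. As $\mu^+$ cannot be covered by $\mu$ sets of size at most $\mu$, there is a single $\nu^* < \mu$, which I may take to be at least $\lambda$, with $X := \{\beta < \mu^+ \mid s(\beta) \leq \nu^*\}$ of size $\mu^+$. On $X$ every $h_\zeta$ is bounded by $\nu^*$, so with $\bar\nu := |\nu^*| \in [\lambda, \mu)$ the reduction above applies verbatim and finishes this case.

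The case $\cf(\mu) < \lambda$ is the main obstacle, because there $s(\beta)$ can equal $\mu$ at every $\beta$, so the simultaneous bounding that drives the previous paragraph is unavailable. Here I would instead exploit that $\mu = \sup_{i < \cf(\mu)} \mu_i$ is a union of only $\cf(\mu) < \lambda$ blocks: at each level $\beta$, the assignment $\zeta \mapsto (\text{block of } h_\zeta(\beta))$ maps $\lambda$ into $\cf(\mu)$, so by regularity of $\lambda$ some block captures $\lambda$ many strands, and the aim is to concentrate a full-width sub-ascent-path into a single block $T^{i^*} = f^{-1}[\mu_{i^*}]$, on which the specializing function has range below $\mu$; the engine, applied at a point of cofinality $\mu_{i^*}^+$, then yields a chain longer than the range of $f \restriction T^{i^*}$, a contradiction. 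The delicate point---and the step I expect to require the most care---is the \emph{simultaneous} alignment of the surviving strands across cofinally many levels, since for distinct levels the sets of strands landing in $T^{i^*}$ need not overlap on a common tail; overcoming this alignment problem (for instance by a pressing-down argument on the block-patterns, or by passing to a suitable elementary submodel of cofinality $\cf(\mu)$) is the crux of the singular case.
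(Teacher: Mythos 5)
The paper does not actually prove this theorem (it is quoted from \cite{shelah_stanley}), so your proposal can only be judged on its own terms. The framework you set up — the tree orders $\hat{\prec}_\zeta$, the specializing functions $h_\zeta$, and the "engine" — is correct, and your argument for the case $\cf(\mu) > \lambda$ (which covers every regular $\mu$) is complete. But the case $\cf(\mu) < \lambda$, i.e. $\mu$ singular of small cofinality, is exactly half of the hypothesis $\cf(\mu) \neq \lambda$, and you leave it unfinished: you explicitly flag the "simultaneous alignment of the surviving strands" as an unresolved crux. As written, the theorem is therefore not proved. That is a genuine gap, not a cosmetic one, since the statement is vacuous content-wise without the singular case it was designed to cover alongside the regular one.

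The gap is, however, closable with one more pigeonhole inside your own machinery; no pressing-down on block-patterns or elementary submodels are needed, because you do not need the surviving strands to overlap on a common tail across levels — you only need one common $\zeta$ for sufficiently many levels below a single $\delta$. Concretely: fix regular $\mu_i \nearrow \mu$ with $\mu_0 > \lambda$. For each $\beta < \mu^+$ the map $\zeta \mapsto \min\{i : h_\zeta(\beta) < \mu_i\}$ sends $\lambda$ into $\cf(\mu) < \lambda$, so by regularity of $\lambda$ there is $i(\beta)$ with $Z_\beta := \{\zeta < \lambda \mid h_\zeta(\beta) < \mu_{i(\beta)}\}$ of size $\lambda$; since $\cf(\mu) < \mu^+$, fix $i^*$ and $X \subseteq \mu^+$ of size $\mu^+$ with $i(\beta) = i^*$ on $X$. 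Let $\delta$ be the $\mu_{i^*}^+$-th element of $X$. For each $\alpha \in X \cap \delta$, the set $\{\zeta : \alpha \hat{\prec}_\zeta \delta\}$ is a tail of $\lambda$ and $Z_\alpha$ is unbounded in $\lambda$, so we may pick $\zeta_\alpha \in Z_\alpha$ with $\alpha \hat{\prec}_{\zeta_\alpha} \delta$. As $|X \cap \delta| = \mu_{i^*}^+ > \lambda$ is regular, some $\zeta^*$ equals $\zeta_\alpha$ for $\mu_{i^*}^+$-many $\alpha$; these $\alpha$ are all $\hat{\prec}_{\zeta^*}$-predecessors of $\delta$, hence form a $\hat{\prec}_{\zeta^*}$-chain by your first observation, on which $h_{\zeta^*}$ is injective with range contained in $\mu_{i^*}$ — contradicting the fact that the chain has size $\mu_{i^*}^+$. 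With this paragraph inserted, your proof is complete.
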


Todorcevic and Torres P\'{e}rez, in \cite{todorcevic_torres}, prove a stronger result 
that is further generalized by L\"{u}cke in \cite{luecke} to show that, in many cases, the weaker requirement that 
$T$ not have an ascending path of narrow width is enough to stop $T$ from being special.

\begin{theorem} [L\"{u}cke, \cite{luecke}]
  Suppose $\lambda < \kappa$ are infinite cardinals such that $\kappa$ is regular and is not 
  the successor of a cardinal $\mu$ such that $\cf(\mu) \leq \lambda$. Suppose $T$ is a tree 
  of height $\kappa$ and $S \subseteq S^\kappa_{>\lambda}$ is a stationary set that is 
  non-stationary with respect to $T$. Then $T$ does not have an ascending path of width $\lambda$.
\end{theorem}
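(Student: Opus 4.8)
The plan is to argue by contradiction: assume $T$ carries an ascending path $\langle b_\gamma : \lambda \to T_\gamma \mid \gamma < \kappa\rangle$ of width $\lambda$, and fix a regressive $r : T \restriction S \to T$ together with witnesses $\langle \mu_t, c_t \mid t \in T\rangle$ showing that $S$ is non-stationary with respect to $T$. The guiding intuition is the case $\lambda = 1$, where the path is a single cofinal branch $b$: for each $\alpha \in S$ the node $b \cap T_\alpha$ satisfies $r(b \cap T_\alpha) <_T b \cap T_\alpha$, so along $b$ the map $r$ strictly lowers levels, and Fodor's lemma yields a fixed level $\rho$ and a stationary set of $\alpha \in S$ with $\height_T(r(b \cap T_\alpha)) = \rho$. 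Since on a single branch the predecessor at level $\rho$ is \emph{unique}, all these images coincide with one node $t = b \cap T_\rho$, and the $\kappa$-many nodes $b \cap T_\alpha$ then form a chain of size $\kappa$ inside $r^{-1}(t)$, contradicting that $c_t$ is injective on chains with $\mu_t < \kappa$. The entire difficulty is to reproduce this ``a fixed level forces a fixed node, hence a long chain in one fibre'' phenomenon when the path has width $\lambda$ and so weaves together $\lambda$ columns rather than following a single branch.

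First I would make two reductions. Replacing $S$ by $\acc(S)$, which is stationary since it is the intersection of $S$ with the club of accumulation points, I may assume every $\beta \in S$ satisfies $\sup(S \cap \beta) = \beta$, so $S \cap \beta$ is unbounded in $\beta$. Then, for each $\beta \in S$ and each $\gamma \in S \cap \beta$, the width-$\lambda$ hypothesis supplies columns witnessing a comparability between level $\gamma$ and level $\beta$; since $\cf(\beta) > \lambda$ (as $S \subseteq S^\kappa_{>\lambda}$), a pigeonhole over the $\lambda$ columns produces a single column $\xi^*_\beta < \lambda$ and an unbounded $E_\beta \subseteq S \cap \beta$ so that, for $\gamma \in E_\beta$, the unique predecessor of $u_\beta := b_\beta(\xi^*_\beta)$ at level $\gamma$ is itself a path node. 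Thus each $u_\beta$ carries a branch $\pred_T(u_\beta)$ that is threaded cofinally by ascending-path nodes lying at levels in $S$. The cofinality bound $\cf(\beta) > \lambda$ is also exactly what lets me, at any $\beta \in S$, take suprema over all $\lambda$ columns and remain below $\beta$; this is the sole role of the restriction $S \subseteq S^\kappa_{>\lambda}$, and I would use it repeatedly.

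Next I would press down globally over $S$. Applying $r$ to $u_\beta$ gives $r(u_\beta) <_T u_\beta$ at a level $\rho_\beta < \beta$, and Fodor's lemma yields a stationary $S_1 \subseteq S$ on which $\rho_\beta$ is a fixed value $\rho$. To finish in the style of the branch heuristic, I must promote this fixed level to a fixed fibre-node $t$ and thread a genuine chain of size exceeding $\mu_t$ through $r^{-1}(t)$. A clean way to organise this is to pass to an auxiliary tree $T^*$ of height $\kappa$ whose level-$\alpha$ nodes are suitable $\lambda$-indexed families drawn from $T_\alpha$, ordered by the comparability relation coming from the path, so that the ascending path becomes an honest \emph{cofinal branch} of $T^*$; the reductions above are precisely what make the order well-defined at limits and the path cofinal. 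One then transports the data to $T^*$, defining $r^*$ by applying $r$ coordinatewise and realigning to the common level $\sup_{\eta < \lambda} \height_T(\cdot) < \alpha$, and defining $c^*$ from the $c_t$ coordinatewise, so that a $T^*$-chain projects to a $T$-chain in each column. Running the $\lambda = 1$ branch argument inside $T^*$ would then produce a chain of size $\kappa$ in a single fibre of $r^*$, which upon unpacking yields a chain of size $> \mu_t$ in some $r^{-1}(t)$, the sought contradiction.

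The hard part will be verifying that the transported data genuinely witnesses that $S$ is non-stationary with respect to $T^*$, i.e. that every fibre of $r^*$ has all of its $T^*$-chains bounded below $\kappa$. A $T^*$-node is a $\lambda$-indexed family, so the naive bound on chains in a fibre is a $\lambda$-fold product of the $T$-side bounds $\mu_t$; such a product need \emph{not} stay below $\kappa$ in general, so the crude lift cannot be the whole story. What must really happen is that the $\le \lambda$ columns \emph{synchronise} along the weave, so that only a single fibre-bound $\mu_t$ is operative and the width-$\lambda$ path behaves, level by level, like the single branch of the heuristic. This synchronisation is exactly where the hypothesis that $\kappa$ is not the successor of a cardinal $\mu$ with $\cf(\mu) \le \lambda$ is essential: when $\kappa = \mu^+$, the condition $\cf(\mu) > \lambda$ forces the $\lambda$-indexed column data (each piece of size $\le \mu$) to be bounded below $\mu$, which is what permits the columns to be aligned on a stationary set; when $\kappa$ is inaccessible the alignment is immediate. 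I expect this to be the main obstacle, and I would regard the remaining bookkeeping --- defining $T^*$ precisely, checking that $r^*$ is regressive and that the path is cofinal --- as routine once the cofinality point is handled. The sharpness of the hypothesis is confirmed by the construction carried out elsewhere in this paper, where $\square_\mu$ with $\cf(\mu) = \lambda$ produces a special $\mu^+$-tree carrying a $\lambda$-ascent path, so no such synchronisation, and hence no such argument, can survive once the cofinality condition fails.
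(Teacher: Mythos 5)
This theorem is quoted from L\"{u}cke's paper and is not proved in the text you were given, so there is no in-paper argument to compare against; judged on its own terms, your proposal is not a proof. The first half (passing to $\acc(S)$, the pigeonhole over $\lambda^2$ pairs at each $\beta \in S$ using $\cf(\beta) > \lambda$, and the Fodor step fixing the level $\rho$ of $r(u_\beta)$) is correct but is the routine part. Everything that makes the theorem nontrivial is concentrated in the step you defer: producing a \emph{single} node $t$ together with a \emph{chain} in $r^{-1}(t)$ of size exceeding $\mu_t$. Your proposed vehicle for this, the auxiliary tree $T^*$ of $\lambda$-indexed families ordered by ``the comparability relation coming from the path,'' is not well defined: the ascending-path condition only guarantees that \emph{some} pair of coordinates is comparable between any two levels, with the witnessing coordinates varying with the pair, so the set of families ``below'' a given one is not linearly ordered and $T^*$ is not a tree; your local reductions fix coordinates only below each $\beta$ separately and do not cohere into a global order. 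You yourself observe that the transported witnesses $(r^*, c^*)$ would only bound chains in a fibre by a $\lambda$-fold product of the $\mu_t$, which can reach $\kappa$; the ``synchronisation'' you invoke to repair this is precisely the content of the theorem, and no argument for it is given.

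Moreover, the mechanism you propose for where the hypothesis on $\kappa$ enters does not work: $\lambda$ many pieces of data each of size $\leq\mu$ have union of size $\leq\mu$ regardless of $\cf(\mu)$, so nothing is gained there. The real quantitative obstruction is that when $\kappa = \nu^+$ the witnesses may have $\mu_t = \nu$ for every $t$ (this is exactly the situation for a classically special tree), so the contradiction requires a chain of size $\kappa$ in a single fibre of $r$ --- that is, essentially a cofinal branch of $T$ threaded through $r^{-1}(t)$. None of the structures in your sketch produce such a branch: the chains you can extract below a fixed high node $b_\delta(\xi)$ have size at most $\cf(\delta) \leq \nu$, which does not exceed $\mu_t$, and the $\kappa$-sized fibres $\{b_\beta(\xi_\beta) \mid \beta \in Z\}$ you can extract by pigeonholing over a fixed $\pred_T(v^*)$ need not be chains (indeed the $c_t$'s force large subsets of them to be antichains). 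Until you exhibit a concrete construction that aligns the $\lambda$ columns into a genuine cofinal chain meeting a single fibre of $r$ in $\kappa$ points --- and isolate exactly where $\cf(\nu) > \lambda$ is used to make that alignment possible --- the argument has a gap at its central step.
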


In particular, if $T$ is a tree of height $\kappa$, $\kappa$ is not the successor of a cardinal 
of cofinality $\leq \lambda$, and $T$ has an ascending path of width $\lambda$, then $T$ is not special. 

This leads naturally to the following question.

\begin{question}[L\"{u}cke, \cite{luecke}]
  Is it consistent that there is a singular cardinal $\mu$ and a cardinal $\lambda$ such that 
  $\cf(\mu) \leq \lambda < \mu$ and there is a special tree $T$ of height $\mu^+$ that has an 
  ascending path of width $\lambda$?
\end{question}

We will answer this question affirmatively, in fact showing that a stronger statement follows from 
$\square_\mu$ but not from $\square_{\mu, 2}$. More precisely, we will prove the following.

\begin{theorem} \label{asc_path_thm}
  Suppose $\mu$ is a singular cardinal.
  \begin{enumerate}
    \item If $\square_\mu$ holds, then there is a special $\mu^+$-tree with a $\cf(\mu)$-ascent path.
    \item It is consistent that $\square_{\mu, 2}$ holds and, for every regular $\lambda < \mu$, 
      if $T$ is a tree of height $\mu^+$ with a $\lambda$-ascent path, then $T$ has a cofinal branch.
  \end{enumerate}
\end{theorem}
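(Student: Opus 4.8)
The plan is to handle the two parts by quite different methods: an explicit minimal-walks construction over a $\square_\mu$-sequence for (1), and a forcing over a large-cardinal ground model for (2).

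For (1), set $\kappa = \cf(\mu)$ and fix an increasing sequence $\langle \mu_i \mid i < \kappa \rangle$ of cardinals cofinal in $\mu$. Fix a $\square_\mu$-sequence $\langle C_\alpha \mid \alpha < \mu^+ \rangle$ and let $\langle e_\gamma \mid \gamma < \mu^+ \rangle$ be the sequence of functions $e_\gamma : \gamma \to \mu$ obtained from minimal walks along it, concretely $e_\gamma(\xi) = \rho_1(\xi, \gamma)$, the maximal weight encountered on the walk from $\gamma$ to $\xi$. Two standard features will be recorded: since $\otp(C_\alpha) \le \mu$ for all $\alpha$, each $e_\gamma$ takes values $< \mu$; and the sequence is coherent modulo finite, i.e.\ for $\alpha < \beta$ the set $\{ \xi < \alpha \mid e_\alpha(\xi) \ne e_\beta(\xi) \}$ is finite. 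The tree $T$ will consist of initial segments (under end-extension) of the functions defined below, and its specialness will come, as usual, from the bound $\otp(C_\alpha) \le \mu$, since the $\rho_1$-tree is a special $\mu^+$-Aronszajn tree under $\square_\mu$.

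The ascent path is where the singularity of $\mu$ is exploited. For $\gamma < \mu^+$ and $i < \kappa$ I define $b_\gamma(i) : \gamma \to \mu$ by keeping only the high values of $e_\gamma$: set $b_\gamma(i)(\xi) = e_\gamma(\xi)$ if $e_\gamma(\xi) \ge \mu_i$ and $b_\gamma(i)(\xi) = 0$ otherwise, and let $T$ be the tree of all initial segments of all such $b_\gamma(i)$, ordered by end-extension. Each $b_\gamma(i)$ lies at level $\gamma$, so $T$ has height $\mu^+$. The point is that for $\alpha < \beta$ the finitely many disagreements between $e_\alpha$ and $e_\beta \restriction \alpha$ involve only finitely many ordinal values below $\mu$; letting $n_{\alpha\beta} < \mu$ bound them and choosing $i$ with $\mu_i > n_{\alpha\beta}$, every such value is erased to $0$ in both $b_\alpha(i)$ and $b_\beta(i) \restriction \alpha$, while off the finite disagreement set the two functions already agree. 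Hence $b_\beta(i) \restriction \alpha = b_\alpha(i)$, i.e.\ $b_\alpha(i) <_T b_\beta(i)$, for all sufficiently large $i < \kappa$, which is exactly the $\cf(\mu)$-ascent path condition. I expect the main obstacle in (1) to be largely bookkeeping: checking that enlarging the $\rho_1$-tree by the relativized nodes $b_\gamma(i)$ keeps every level of size at most $\mu$ and preserves specialness (the values still lie in $\mu$, so the standard coloring adapts), and that the ``keep-high-values'' truncation is taken on the mod-finite coherent representation of the square data, so that erasure above $\mu_i$ produces \emph{exact}, not merely approximate, agreement.

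For (2), I would force a weak square over a ground model supplying compactness below $\mu$. Take $\mu$ singular of cofinality $\omega$ that is a limit of cardinals which are $\lambda^+$-strongly compact for each regular $\lambda < \mu$ (so for every such $\lambda$ there are cofinally many $\theta < \mu$ witnessing $\lambda^+$-strong compactness). The engine is the standard principle that a $\lambda^+$-strongly compact $\theta > \lambda$ converts any $\lambda$-ascent path through a tree of height at least $\theta$ into a cofinal branch: a fine ultrapower embedding with critical point above $\lambda$ fixes the $\lambda$ coordinates of the path, its image at the top supplies a branch, and running this cofinally in $\mu$ and pasting yields a cofinal branch through a whole $\mu^+$-tree. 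Over this ground model I would force $\square_{\mu,2}$ with the canonical $<\mu^+$-strategically closed poset adding a $\square_{\mu,2}$-sequence by initial approximations; under GCH it is $\mu^+$-c.c.\ and adds no new subsets of $\mu$, so every compact $\theta < \mu$ and its measures on subsets of $\theta$ survive. The delicate point, and the main obstacle of (2), is the tension between the incompactness asserted by $\square_{\mu,2}$ and the compactness needed for branches: strong compactness of $\theta$ relative to degrees $\ge \mu^+$ is exactly what the square forcing can disturb, so the crux is to lift each relevant fine embedding through the forcing, using its strategic closure together with the closure of the target model to build a generic for the image poset, thereby keeping the branch property in the extension, while also confirming that the added sequence is a genuine $\square_{\mu,2}$-sequence and is not threaded. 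This produces, in one model, both $\square_{\mu,2}$ and the statement that every $\mu^+$-tree carrying a $\lambda$-ascent path for regular $\lambda < \mu$ has a cofinal branch; juxtaposed with (1), it shows that the conclusion of (1) genuinely requires the full strength of $\square_\mu$ and fails for $\square_{\mu,2}$.
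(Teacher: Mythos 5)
Your two parts call for separate verdicts.

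For (1) you take a genuinely different route from the paper. The paper first converts $\square_\mu$ into an indexed square sequence $\square^{\mathrm{ind}}_{\mu,\cf(\mu)}$ and then performs minimal walks indexed by $i<\cf(\mu)$; exact coherence of the indexed clubs at accumulation points yields the ascent path outright, and specialness comes from the Kleene--Brouwer ordering of the trace sequences. You instead run ordinary $\rho_1$ over the plain square sequence and upgrade its mod-finite coherence to exact agreement on a tail of indices by zeroing out all values below $\mu_i$; the ascent-path computation is correct, and the level-size bound follows from mod-finite coherence as usual. The one step you defer --- ``the standard coloring adapts'' --- is the step that actually needs an argument, because the truncation floods the tree with the value $0$: the fibre of nodes whose last value is $0$ can a priori contain chains of size $\mu$ (initial segments of $b_{\gamma_n}(i_n)$ with $i_n$ increasing), so the usual ``each fibre has short chains, hence splits into few antichains'' reasoning does not apply uniformly across fibres. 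It can be repaired: index the tree so that level-$\alpha$ nodes are the restrictions $b_\gamma(i)\restriction(\alpha+1)$ (as the paper does with its trace sequences, so every node has a last coordinate), and invoke Todorcevic's estimate $|\{\xi<\gamma : \rho_1(\xi,\gamma)\le\nu\}|\le|\nu|+\aleph_0$. Then a node $t=b_\gamma(i)\restriction(\alpha+1)$ has at most $\mu_i<\mu$ predecessors sharing its last value when that value is $0$, and at most $|\nu|+\aleph_0<\mu$ when it is $\nu\ne 0$, so $t\mapsto\bigl(t(\alpha),\otp\{\xi\le\alpha : t(\xi)=t(\alpha)\}\bigr)$ is injective on chains into $\mu\times\mu$. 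With that estimate made explicit, your proof of (1) is complete and is a legitimate alternative to the paper's.

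For (2) there are two genuine gaps. First, the large-cardinal hypothesis is too weak and the ``pasting'' step is a fallacy: $\lambda^+$-strong compactness of $\theta$ only lets the ultrapower argument see initial segments of the tree, and having branches through all proper initial segments of a tree of height $\mu^+$ never pastes into a cofinal branch --- that is exactly what it means for the tree to be Aronszajn. To extract a cofinal branch from a $\lambda$-ascent path you need a fine ultrafilter on $P_\theta(\mu^+)$, i.e.\ $\mu^+$-strong compactness, so that $j``\mu^+$ is bounded below $j(\mu^+)$ and you can press down on the single level $\sup(j``\mu^+)$ of $j(T)$. Second, and more seriously, preserving this branch property through the $\square_{\mu,2}$-forcing is the entire content of the theorem, and your sketch does not go through as stated: the target of a strong-compactness embedding is not closed under $\mu$-sequences, so ``using the closure of the target model to build a generic for the image poset'' is not available; and the master-condition/threading analysis --- which is precisely where the width-$2$ hypothesis enters, since a thread through $\sup(j``\mu^+)$ can be accommodated only because a second club is permitted at each level --- is never carried out. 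The paper avoids all of this by a different reduction: a branchless tree of height $\mu^+$ with a $\lambda$-ascent path yields an unbounded subadditive $c:[\mu^+]^2\to\lambda$, Viale's covering property $\mathrm{CP}(\mu^+,\lambda)$ rules such functions out, and the compatibility of $\mathrm{CP}^*(\mu^+)$ with $\square_{\mu,2}$ is quoted from \cite{covering_2}. As written, your part (2) is a plan for a known-to-be-delicate construction rather than a proof; you should either supply the lifting argument in full or make the same reduction to subadditive functions and cite the covering-property results.
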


Clause (2) of Theorem \ref{asc_path_thm} is due to Shani and is proven in \cite{shani}. The proof there goes through an argument 
about fresh subsets of ordinals in ultrapowers. We provide a different proof here, using work of the author 
from \cite{covering_2}. In the model we will construct for clause (2) of Theorem \ref{asc_path_thm} (as well as in that 
constructed by Shani in \cite{shani}), we will have $2^\mu = \mu^+$. In \cite{brodsky_rinot}, 
Brodsky and Rinot show that, if $\mu$ is singular, then $\square_\mu + 2^\mu = \mu^+$ implies 
$\boxtimes(\mu^+) + \diamondsuit(\mu^+)$. Therefore, by clause (3) of Theorem \ref{brodsky_rinot_thm}, 
$\square_\mu + 2^\mu = \mu^+$ implies the existence of a $\mu^+$-Souslin tree that has a $\lambda$-ascent 
path for every infinite cardinal $\lambda < \mu$. This shows that 
Brodsky and Rinot's result is optimal in the sense that the existence of such a Souslin tree does not follow from 
$\square_{\mu, 2} + 2^\mu = \mu^+$.

Versions of clause (2) of Theorem \ref{asc_path_thm} also hold for successors of regular cardinals 
and inaccessible cardinals. This provides an example of a case in which $\square_{\mu, 2}$ is compatible with 
a compactness principle that is denied by $\square_\mu$. As another example of such a result, Sakai shows in 
\cite{sakai} that, if $\mu$ is a regular, uncountable cardinal, then $\square_{\mu, 2}$ is compatible with the 
Chang's Conjecture variation $(\mu^+, \mu) \twoheadrightarrow (\omega_1, \omega)$, whereas results of 
Todorcevic (see \cite{todorcevic}) imply that, for any cardinal $\nu < \mu$, $(\mu^+, \mu) \twoheadrightarrow 
(\nu^+, \nu)$ implies the failure of $\square_\mu$.

\section{Forcing preliminaries} \label{forcing_sect}

In this section, we introduce some forcing posets that will be useful for us. 
We start by looking at an indexed strengthening of $\square_{\mu, \cf(\mu)}$, studied in 
\cite{cfm} and \cite{cummings-schimmerling}.

\begin{definition} [Cummings, Foreman, and Magidor \cite{cfm}]
  Suppose $\mu$ is a singular cardinal. A sequence $\langle C_{\alpha, i} \mid 
  \alpha < \mu^+, i(\alpha) \leq i < \cf(\mu) \rangle$ is a $\square^{\mathrm{ind}}_{\mu, 
  \cf(\mu)}$-sequence if the following conditions hold:
  \begin{enumerate}
    \item for every $\alpha < \mu^+$, $i(\alpha) < \cf(\mu)$;
    \item for every $\alpha < \mu^+$ and $i(\alpha) \leq i < \cf(\mu)$, 
      $C_{\alpha, i}$ is club in $\alpha$;
    \item there is an increasing sequence of regular cardinals, $\langle \mu_i 
      \mid i < \cf(\mu) \rangle$, cofinal in $\mu$, such that, for all 
      $\alpha < \mu^+$ and $i(\alpha) \leq i < \cf(\mu)$, $\otp(C_{\alpha, i}) < \mu_i$;
    \item for all $\alpha < \mu^+$ and $i(\alpha) \leq i < j < \cf(\mu)$, $C_{\alpha, i} 
      \subseteq C_{\alpha, j}$;
    \item for all $\alpha < \beta < \mu^+$ and $i(\beta) \leq i < \cf(\mu)$, if 
      $\alpha \in \acc(C_{\beta, i})$, then $i(\alpha) \leq i$ and $C_{\beta, i} 
      \cap \alpha = C_{\alpha, i}$;
    \item for all limit $\alpha < \beta < \mu^+$, there is $i < \cf(\mu)$ such that 
      $i(\alpha), i(\beta) \leq i$ and $\alpha \in \acc(C_{\beta, i})$.
  \end{enumerate}
  $\square^{\mathrm{ind}}_{\mu, \cf(\mu)}$ holds if there is a 
  $\square^{\mathrm{ind}}_{\mu, \cf(\mu)}$-sequence.
\end{definition}

$\square^{\mathrm{ind}}_{\mu, \cf(\mu)}$ can be introduced by a natural forcing poset.

\begin{definition}
  Suppose $\mu$ is a singular cardinal and $\vec{\mu} = \langle \mu_i \mid i < \cf(\mu) \rangle$ 
  is an increasing sequence of regular cardinals, cofinal in $\mu$. Then $\bb{S}^{\mathrm{ind}}_{\vec{\mu}}$ 
  is the forcing poset whose conditions are all $s = \langle C^s_{\alpha, i} \mid \alpha \leq \gamma^s, 
  i(\alpha)^s \leq i < \cf(\mu) \rangle$ such that:
  \begin{enumerate}
    \item $\gamma^s < \mu^+$;
    \item for all $\alpha \leq \gamma^s$, $i(\alpha)^s < \cf(\mu)$;
    \item for all $\alpha \leq \gamma^s$ and $i(\alpha)^s \leq i < \cf(\mu)$, 
      $C^s_{\alpha, i}$ is club in $\alpha$ and $\otp(C^s_{\alpha, i}) < \mu_i$;
    \item for all $\alpha \leq \gamma^s$ and $i(\alpha)^s \leq i < j < \cf(\mu)$, 
      $C^s_{\alpha, i} \subseteq C^s_{\alpha, j}$;
    \item for all $\alpha < \beta \leq \gamma^s$ and $i(\beta)^s \leq i < \cf(\mu)$, 
      if $\alpha \in \acc(C^s_{\beta, i})$, then $i(\alpha)^s \leq i$ and 
      $C^s_{\beta, i} \cap \alpha = C^s_{\alpha, i}$;
    \item for all limit $\alpha < \beta \leq \gamma^s$, there is $i < \cf(\mu)$ 
      such that $i(\alpha)^s, i(\beta)^s \leq i$ and $\alpha \in \acc(C^s_{\beta, i})$.
  \end{enumerate}
  $\bb{S}^{\mathrm{ind}}_{\vec{\mu}}$ is ordered by end-extension.
\end{definition}

The following lemma is proven in Section 9 of \cite{cfm}.

\begin{lemma}
  Suppose $\mu$ and $\vec{\mu}$ are as in the previous definition.
  \begin{enumerate}
    \item $\bb{S}^{\mathrm{ind}}_{\vec{\mu}}$ is $\cf(\mu)$-directed closed.
    \item $\bb{S}^{\mathrm{ind}}_{\vec{\mu}}$ is $<\mu$-strategically closed.
    \item If $G$ is $\bb{S}^{\mathrm{ind}}_{\vec{\mu}}$-generic over $V$, then, in $V[G]$, 
      $\bigcup G$ is a $\square^{\mathrm{ind}}_{\mu, \cf(\mu)}$-sequence.
  \end{enumerate}
\end{lemma}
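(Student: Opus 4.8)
The plan is to prove the three clauses in turn, writing $\theta = \cf(\mu)$ throughout and keeping in mind the cofinal sequence $\vec{\mu} = \langle \mu_i \mid i < \theta \rangle$. For clause (1), $\theta$-directed closure, the first observation is that since $\mathbb{S}^{\mathrm{ind}}_{\vec{\mu}}$ is ordered by end-extension, any two compatible conditions are in fact comparable, so a directed family $D$ with $|D| < \theta$ is a chain. I would set $s^- = \bigcup D$ and $\gamma^* = \sup\{\gamma^s \mid s \in D\}$. If the supremum is attained then $s^-$ is the maximal (hence least) element of $D$ and we are done; otherwise $\gamma^*$ is a limit ordinal with $\cf(\gamma^*) \le |D| < \theta$, and $s^-$ is a legitimate condition on $[0, \gamma^*)$ to which a top level at $\gamma^*$ must be appended. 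Because there are fewer than $\theta$ conditions and $\theta$ is regular, the value $i^* := \sup\{i(\gamma^s)^s \mid s \in D\}$ is below $\theta$; enlarging $i^*$ if necessary I may also assume $\mu_{i^*} > \cf(\gamma^*)$. Fixing a cofinal sequence of accumulation points of $s^-$ of order type $\cf(\gamma^*)$ and threading the level-$i$ clubs along it for each $i \ge i^*$ defines $C_{\gamma^*, i}$; setting $i(\gamma^*) = i^*$ then makes clauses (3)--(5) immediate from coherence, the order-type bound follows since $\mu_i \ge \mu_{i^*}$ is regular above $\cf(\gamma^*)$, and clause (6) holds because every limit $\alpha < \gamma^*$ already appeared in $s^-$.

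For clause (2), $<\mu$-strategic closure, it suffices to supply, for each fixed $\delta < \mu$, a winning strategy in the closure game of length $\delta$. Since $\vec{\mu}$ is cofinal in $\mu$, I would fix once and for all an index $i^* < \theta$ with $\mu_{i^*} > \delta$. The strategy for the closure player maintains the invariants that each condition $t$ she plays has $\gamma^t$ a limit ordinal with $i(\gamma^t)^t = i^*$, that her earlier top points are accumulation points of $C_{\gamma^t, i^*}$ (so her level-$i^*$ clubs form a coherent tower), and that her top points are enumerated continuously. The requirement that the clubs grow with the index (clause (4)) is exactly what lets her absorb any limit ordinal created by her opponent into the higher-index clubs only, so that pinning the top index at $i^*$ never conflicts with clause (6). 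At a limit stage $\eta \le \delta$ of the game, with $\gamma^* = \sup_\zeta \gamma^{s_\zeta}$ and $\rho = \cf(\eta) \le \delta < \mu_{i^*}$, the coherent level-$i^*$ clubs along the play union to a club $C_{\gamma^*, i^*}$ of order type $< \mu_{i^*}$ (a union of $\rho$ coherent pieces each of order type $< \mu_{i^*}$, with $\mu_{i^*}$ regular), and threading simultaneously at every level $i \ge i^*$ yields a lower bound. The hard part, and the reason a strategy rather than outright closure is needed, is precisely this threading at limits of cofinality $\ge \theta$: a naive union would demand a top index of cofinality $\theta$, which is out of range, and it is the bound $i^*$ fixed in advance from the game length $\delta$ that keeps all relevant indices below $\theta$ and makes the construction go through.

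For clause (3), a routine genericity argument finishes the job. For each $\alpha < \mu^+$ the set $\{s \mid \gamma^s \ge \alpha\}$ is dense --- given any condition, append points above its top and thread a fresh club at a sufficiently large index, exactly as in clause (1) --- so $\bigcup G$ has domain $\mu^+$. Each of the six requirements in the definition of a $\square^{\mathrm{ind}}_{\mu, \cf(\mu)}$-sequence is witnessed inside a single condition; in particular, for limit ordinals $\alpha < \beta < \mu^+$, any $s \in G$ with $\gamma^s \ge \beta$ already satisfies clause (6) for the pair $(\alpha, \beta)$. Hence $\bigcup G$ inherits all of them, with $\vec{\mu}$ serving as the witnessing sequence in clause (3).
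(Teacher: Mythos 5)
The paper does not actually prove this lemma --- it defers entirely to Section 9 of the Cummings--Foreman--Magidor paper --- so there is no in-paper argument to compare against; judged on its own terms, your sketch follows the standard CFM route, but two steps do not work as written. The more serious one is in clause (1): the index $i^* = \sup\{i(\gamma^s)^s \mid s \in D\}$ is not large enough for the threading to be defined. For $s, s'$ in the chain with $\gamma^s < \gamma^{s'}$, clause (6) of the definition of a condition only provides \emph{some} $j$ with $i(\gamma^s)^s, i(\gamma^{s'})^{s'} \le j$ and $\gamma^s \in \acc(C^{s'}_{\gamma^{s'}, j})$, and this witness $j$ may be strictly larger than both top indices; below it, $\gamma^s$ need not even belong to $C^{s'}_{\gamma^{s'}, i}$, so the level-$i^*$ clubs of the members of $D$ need not cohere and their "union along a cofinal sequence" is not a club. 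The repair uses exactly the regularity of $\theta$ that you already invoke: since $|D| < \theta$, take $i^*$ to also dominate the $({<}\theta)$-many clause-(6) witnesses for the pairs of tops along the chain; monotonicity in the index (clause (4) of the condition definition, which makes $\acc(C_{\beta,i})$ increase with $i$) then propagates coherence to every level $i \ge i^*$. This is precisely the point at which $\square^{\mathrm{ind}}$ differs from ordinary $\square$, so it should not be elided.

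The second gap is in clause (3): density of $\{s \mid \gamma^s \ge \alpha\}$ is not "exactly as in clause (1)." Clause (1) shows how to put a single new top level above a short \emph{chain of conditions}; here you must raise the top of one condition up to an arbitrary $\alpha < \mu^+$, possibly of cardinality $\mu$, and the new limit levels in between must themselves satisfy clause (6). The standard fix is an induction on $\alpha$ using clause (2): writing $\alpha = \sup_{\zeta < \rho} \alpha_\zeta$ with $\rho = \cf(\alpha) < \mu$, run the game of length $\rho + 1$ in which the challenger enters $D_{\alpha_\zeta}$ (available by the inductive hypothesis) and the good player follows her winning strategy. Your clause (2) itself is described at the right level: fixing $i^*$ with $\mu_{i^*} > \delta$ in advance, keeping the strategy's own tops a coherent tower at level $i^*$, and discharging clause (6) for the opponent's limit ordinals only at higher indices is the correct mechanism, though the successor step --- building a single top level that simultaneously continues the level-$i^*$ tower and, at sufficiently high indices, absorbs every limit ordinal the opponent introduced, with clause (4) reconciling the two --- is the part that actually needs to be written out.
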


There is also a natural forcing notion to add a $\square(\kappa)$-sequence.

\begin{definition}
  Suppose $\kappa$ is a regular, uncountable cardinal. $\bb{S}(\kappa)$ is the 
  forcing poset whose conditions are all $s = \langle D^s_\alpha \mid \alpha \leq \gamma^s \rangle$ 
  such that:
  \begin{enumerate}
    \item $\gamma^s < \kappa$;
    \item for all $\alpha \leq \gamma^s$, $D^s_\alpha$ is club in $\alpha$;
    \item for all $\alpha < \beta \leq \gamma^s$, if $\alpha \in \acc(D_\beta)$, 
      then $D_\beta \cap \alpha = D_\alpha$.
  \end{enumerate}
  $\bb{S}(\lambda)$ is ordered by end-extension.
\end{definition}

The following lemma is standard. A proof can be found in \cite{covering}.

\begin{lemma}
  Suppose $\kappa$ is a regular, uncountable cardinal.
  \begin{enumerate}
    \item $\bb{S}(\kappa)$ is countably closed.
    \item $\bb{S}(\kappa)$ is $\kappa$-strategically closed.
    \item If $G$ is $\bb{S}(\kappa)$-generic over $V$, then, in $V[G]$, 
      $\bigcup G$ is a $\square(\kappa)$-sequence.
  \end{enumerate}
\end{lemma}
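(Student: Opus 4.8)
The plan is to treat the three clauses in order, with the real work concentrated in clause (3).

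For clause (1), given a descending sequence $\langle s_n \mid n<\omega\rangle$ of conditions, I would first dispose of the case where $\langle \gamma^{s_n}\rangle$ is eventually constant, in which a lower bound already appears in the sequence. Otherwise, passing to a subsequence, I may assume $\langle\gamma^{s_n}\rangle$ is strictly increasing with supremum $\gamma := \sup_n \gamma^{s_n}$, a point of cofinality $\omega$. The union $\bigcup_n s_n$ already specifies a coherent sequence $\langle D_\alpha\mid\alpha<\gamma\rangle$, so it remains only to choose $D_\gamma$. I would set $D_\gamma := \{\gamma^{s_n}\mid n<\omega\}$; this is club in $\gamma$, and since it has order type $\omega$ we have $\acc(D_\gamma)=\emptyset$, so the coherence requirement (3) in the definition of $\bb{S}(\kappa)$ is vacuous at $\gamma$. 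Thus $\langle D_\alpha\mid\alpha\le\gamma\rangle$ is a condition below every $s_n$. This freedom to choose $D_\gamma$ arbitrarily at a point of cofinality $\omega$ will be exploited again in clause (3).

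For clause (2), I would exhibit a winning strategy for the nonempty player (call her II) in the game of length $\kappa$ in which the players build a descending sequence of conditions and II is responsible for the limit stages. The strategy is for II to play only conditions whose top $\gamma^s$ is a successor ordinal, and, at every limit stage $\lambda$ of the game, to take the minimal lower bound: letting $\gamma := \sup_{\beta<\lambda}\gamma^{p_\beta}$, she sets $\gamma^{p_\lambda} := \gamma$ and defines $D_\gamma$ to be the closure in $\gamma$ of the set $C := \{\gamma^{p_\beta}\mid\beta<\lambda\}$ of tops played so far. The hard part is verifying that this $D_\gamma$ coheres, i.e. that $D_\gamma\cap\alpha = D_\alpha$ for every $\alpha\in\acc(D_\gamma)$. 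Here the point is that such an $\alpha$ is a limit point of $C$, hence is itself a top $\gamma^{p_{\beta^*}}$ occurring at some limit stage $\beta^*<\lambda$ of the game (this is exactly where playing successor tops at non-limit stages, and minimal, hence continuous, tops at limit stages, is used); an induction on the game then shows that $D_\alpha$ was defined as the closure of $C\cap\alpha$, which agrees with $D_\gamma\cap\alpha$. Since II thereby survives every limit stage below $\kappa$, $\bb{S}(\kappa)$ is $\kappa$-strategically closed.

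For clause (3), let $G$ be generic and $\vec{D} = \bigcup G = \langle D_\alpha\mid\alpha<\kappa\rangle$. That $\dom(\vec{D})=\kappa$, that each $D_\alpha$ is club in $\alpha$, and that $\vec{D}$ satisfies the coherence clause of a $\square(\kappa)$-sequence are all immediate: the relevant sets of conditions are dense (one extends any condition to any desired top by the constructions used above, e.g. setting $D_{\delta+1}=\{\delta\}$ to pass successors), while clubness and coherence hold within each single condition, hence in the union. The crux is the no-thread requirement, which I would argue by contradiction: suppose $p\Vdash \dot{D}\text{ is a thread}$. Using that $\bb{S}(\kappa)$ is $<\kappa$-distributive (so bounded approximations to $\dot{D}$ lie in $V$), I would build a descending sequence $\langle r_n\mid n<\omega\rangle\le p$ and an increasing sequence $\langle\gamma_n\rangle$ with $\gamma^{r_n}<\gamma_{n+1}$ such that $r_n$ decides $\dot{D}\cap\gamma_n$ and forces some element of $\dot{D}$ into the interval $(\gamma_n,\gamma_{n+1})$. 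Let $\gamma^* := \sup_n\gamma_n = \sup_n\gamma^{r_n}$. Then $\dot{D}\cap\gamma^*$ is decided as a fixed set $d^*\in V$ by the $r_n$ alone, each of which has top strictly below $\gamma^*$, and every lower bound of $\langle r_n\rangle$ forces $\gamma^*\in\acc(\dot{D})$ and hence $\dot{D}\cap\gamma^* = D_{\gamma^*}$. Now I invoke clause (1): since $\cf(\gamma^*)=\omega$, there are two lower bounds $r_\omega, r'_\omega$ of $\langle r_n\rangle$ agreeing below $\gamma^*$ but with $D^{r_\omega}_{\gamma^*}\ne D^{r'_\omega}_{\gamma^*}$ (any two distinct clubs of order type $\omega$ cohere vacuously). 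Both force $\dot{D}\cap\gamma^* = d^*$ and $\gamma^*\in\acc(\dot{D})$, so both force $d^* = D_{\gamma^*}$, yielding $D^{r_\omega}_{\gamma^*} = d^* = D^{r'_\omega}_{\gamma^*}$, a contradiction. I expect this final step — forcing $\gamma^*$ into $\acc(\dot{D})$ and then playing the two incompatible choices of $D_{\gamma^*}$ off against each other — to be the main obstacle of the whole lemma.
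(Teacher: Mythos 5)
Your proof is correct, and it is essentially the standard argument; the paper itself does not prove this lemma but defers to the reference \cite{covering}, where the same scheme is used (closing off at countable-cofinality tops with an order-type-$\omega$ club for closure, the closure-of-tops strategy for $\kappa$-strategic closure, and the two-incompatible-lower-bounds trick against a name for a thread). All the delicate points — that limit points of the set of played tops are exactly the tops chosen at limit stages of the game, and that the freedom in choosing $D_{\gamma^*}$ at a point of cofinality $\omega$ contradicts any decided value of $\dot{D}\cap\gamma^*$ — are handled as they should be.
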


We now introduce posets designed to add threads to these square sequences.

\begin{definition}
  Suppose $\mu$ is a singular cardinal and $\vec{C} = \langle C_{\alpha, i} \mid 
  \alpha < \mu^+, i(\alpha) \leq i < \cf(\mu) \rangle$ is a 
  $\square^{\mathrm{ind}}_{\mu, \cf(\mu)}$-sequence. Let $i < \cf(\mu)$. $\bb{T}_{0, i}(\vec{C})$ 
  is the forcing poset whose conditions are all $C_{\alpha, i}$ such that $\alpha < \mu^+$ 
  is a limit ordinal and $i(\alpha) \leq i$. $\bb{T}_{0, i}(\vec{C})$ is ordered by end-extension, i.e. 
  $C_{\beta, i} \leq C_{\alpha, i}$ iff $\alpha \in \acc(C_{\beta, i})$.
\end{definition}

\begin{definition}
  Suppose $\kappa$ is a regular, uncountable cardinal and $\vec{D} = \langle D_\alpha \mid 
  \alpha < \kappa \rangle$ is a $\square(\kappa)$-sequence. $\bb{T}_1(\vec{D})$ is the 
  forcing poset whose conditions are all $D_\alpha$ such that $\alpha < \kappa$ is a limit 
  ordinal. $\bb{T}_1(\vec{D})$ is ordered by end-extension.
\end{definition}

\begin{lemma} \label{dense_closed}
  Let $\mu$ be a singular cardinal, let $\kappa = \mu^+$, and let $\vec{\mu} = 
  \langle \mu_i \mid i < \cf(\mu) \rangle$ be an increasing sequence of regular 
  cardinals, cofinal in $\mu$. Let $\bb{S}_0 = \bb{S}^{\mathrm{ind}}_{\vec{\mu}}$ 
  and $\bb{S}_1 = \bb{S}(\kappa)$. Let $\dot{\vec{C}}$ be a name for the 
  $\square^{\mathrm{ind}}_{\mu, \cf(\mu)}$-sequence added by $\bb{S}_0$, and let 
  $\dot{\vec{D}}$ be a name for the $\square(\kappa)$-sequence added by $\bb{S}_1$. 
  Let $i < \cf(\mu)$, let $\dot{\bb{T}}_{0, i}$ be a name for $\bb{T}_{0, i}(\vec{C})$, 
  and let $\dot{\bb{T}}_1$ be a name for $\bb{T}_1(\vec{D})$. In $V$, 
  $(\bb{S}_0 \times \bb{S}_1) * (\dot{\bb{T}}_{0,i} \times \dot{\bb{T}}_1)$ has a dense  
  $\mu_i$-directed closed subset.
\end{lemma}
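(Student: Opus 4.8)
The plan is to find an explicit dense subset $\bb{D}$ of the iteration on which the order is transparent, and to verify $\mu_i$-directed closure directly on $\bb{D}$. I would let $\bb{D}$ consist of those conditions $((s_0, s_1), (\dot{t}_0, \dot{t}_1))$ for which there is a single limit ordinal $\gamma$ with $\gamma^{s_0} = \gamma^{s_1} = \gamma$ and $i(\gamma)^{s_0} \leq i$, and whose thread coordinates are \emph{pinned to the tops} of the two square conditions, i.e. $\dot{t}_0 = \check{C}^{s_0}_{\gamma, i}$ and $\dot{t}_1 = \check{D}^{s_1}_\gamma$. Since $C^{s_0}_{\gamma, i}$ and $D^{s_1}_\gamma$ are ground-model clubs that $(s_0, s_1)$ forces to equal $\dot{C}_{\gamma, i}$ and $\dot{D}_\gamma$, these are genuine $\dot{\bb{T}}_{0,i}$- and $\dot{\bb{T}}_1$-conditions, so $\bb{D}$ is well defined, and the point of pinning is that membership in $\bb{D}$ is decided outright by the first coordinate.

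For density I would start with an arbitrary condition and first extend $(s_0, s_1)$ to decide $\dot{t}_0$ and $\dot{t}_1$ to be specific entries $C^{s_0}_{\alpha_0, i}$ and $D^{s_1}_{\alpha_1}$ of the two (partial) square sequences, with $\alpha_0, \alpha_1$ limit. Then, using the strategic closure of $\bb{S}_0$ and $\bb{S}_1$, I would further extend $(s_0, s_1)$ to a pair with a common limit top $\gamma > \alpha_0, \alpha_1$, arranging $i(\gamma)^{s_0} \leq i$, $\alpha_0 \in \acc(C^{s_0}_{\gamma, i})$, and $\alpha_1 \in \acc(D^{s_1}_\gamma)$; this is the standard way of continuing a thread up to a fresh limit level and is precisely where strategic closure is used. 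By coherence, $C^{s_0}_{\gamma, i} \cap \alpha_0 = C^{s_0}_{\alpha_0, i}$ and $D^{s_1}_\gamma \cap \alpha_1 = D^{s_1}_{\alpha_1}$, so in $\bb{T}_{0,i}$ and $\bb{T}_1$ we get $C^{s_0}_{\gamma, i} \leq \dot{t}_0$ and $D^{s_1}_\gamma \leq \dot{t}_1$; repinning the thread coordinates to these top clubs then yields a condition in $\bb{D}$ below the original.

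For $\mu_i$-directed closure, given a directed family $\{p^\xi \mid \xi < \theta\} \subseteq \bb{D}$ with $\theta < \mu_i$ and common tops $\gamma_\xi$, directedness forces the thread coordinates to be pairwise comparable, so the $\gamma_\xi$ are linearly ordered and the top clubs form $\subseteq$-increasing chains under end-extension. Setting $\gamma = \sup_\xi \gamma_\xi$, below $\gamma$ the two square conditions are the unions of the $s_0^\xi$ and the $s_1^\xi$, and at the top level I would put $i(\gamma) = i$, $C_{\gamma, i} = \bigcup_\xi C^{s_0^\xi}_{\gamma_\xi, i}$, $D_\gamma = \bigcup_\xi D^{s_1^\xi}_{\gamma_\xi}$, and, for higher columns, $C_{\gamma, i''} = \bigcup \{ C_{\beta, i''} \mid \beta \in \acc(C_{\gamma, i}),\ i(\beta) \leq i'' \}$. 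The thread coordinates $C_{\gamma, i}$ and $D_\gamma$ end-extend every $C^{s_0^\xi}_{\gamma_\xi, i}$ and $D^{s_1^\xi}_{\gamma_\xi}$, so the union is a lower bound in $\bb{D}$ once we know it is a genuine condition. The width $\mu_i$ enters only in clause (3): $\otp(C_{\gamma, i}) = \sup_\xi \otp(C^{s_0^\xi}_{\gamma_\xi, i})$ is a supremum of fewer than $\mu_i$ ordinals each below $\mu_i$, hence below $\mu_i$ by regularity, and the analogous bound for $C_{\gamma, i''}$ below $\mu_{i''}$ follows once one checks, via coherence in column $i$, that the clubs $C_{\beta, i''}$ end-extend one another. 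The $\bb{S}_1$-side carries no order-type restriction, so it is unconditionally closed.

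The hard part will be verifying clause (6) at the top level: for every limit $\alpha < \gamma$ I must produce a column connecting $\alpha$ to $\gamma$. If $\alpha \in \acc(C_{\gamma, i})$, coherence inside each $s_0^\xi$ gives $i(\alpha) \leq i$ and column $i$ works. Otherwise I would let $\beta$ be the least element of $\acc(C_{\gamma, i})$ above $\alpha$; then $\beta < \gamma$, so $\alpha$ and $\beta$ both lie below some $\gamma_\xi$, and clause (6) in $s_0^\xi$ supplies an $i'' \geq \max(i(\alpha), i(\beta))$ with $\alpha \in \acc(C_{\beta, i''})$. Because $\beta \in \acc(C_{\gamma, i})$ the construction gives $C_{\beta, i''} \subseteq C_{\gamma, i''}$ with $C_{\gamma, i''} \cap \beta = C_{\beta, i''}$, whence $\alpha \in \acc(C_{\gamma, i''})$, as required. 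This is exactly where the thread is essential: it is the thread that supplies a cofinal, coherent club at the new top level, so that the connectivity of the lower levels can be relayed up through $\gamma$, whereas the bare forcing $\bb{S}_0$ fails to be $<\mu$-closed precisely because clause (6) cannot be maintained at limits. The remaining clauses (1), (2), (4), (5) are routine coherence bookkeeping, and this completes the proof that $\bb{D}$ is a dense $\mu_i$-directed closed subset.
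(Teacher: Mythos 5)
Your proposal is correct and takes essentially the same approach as the paper: you define exactly the dense set $\bb{U}$ used there (conditions with a common top $\gamma$, $i(\gamma)^{s_0}\leq i$, and thread coordinates pinned to $C^{s_0}_{\gamma,i}$ and $D^{s_1}_\gamma$), and your verification of density and $\mu_i$-directed closure fills in the details that the paper delegates to Lemma 9.6 of \cite{cfm} and Proposition 3.11 of \cite{covering}, including the key point about clause (6) at the new top level.
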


\begin{proof}
  Let $\bb{U}$ be the set of $((s_0, s_1),(\dot{t}_0, \dot{t}_1)) \in (\bb{S}_0 \times \bb{S}_1) * 
  (\dot{\bb{T}}_{0,i} \times \dot{\bb{T}}_1)$ such that:
  \begin{itemize}
    \item $\gamma^{s_0} = \gamma^{s_1} =: \gamma$;
    \item $i(\gamma)^{s_0} \leq i$;
    \item $s_0 \Vdash ``\dot{t}_0 = C^{s_0}_{\gamma, i}"$;
    \item $s_1 \Vdash ``\dot{t}_1 = D^{s_1}_\gamma"$.
  \end{itemize}
  The verification that $\bb{U}$ is dense and $\mu_i$-directed closed is a 
  straightforward combination of the proofs of Lemma 9.6 from \cite{cfm} 
  and Proposition 3.11 from \cite{covering}.
\end{proof}

We will also need some basic facts about the approachability ideal $I[\kappa]$ 
for regular, uncountable $\kappa$. The reader is referred to \cite{eisworth} 
for a wealth of information about $I[\kappa]$. Relevant to us is the fact 
that $I[\kappa]$ is a normal ideal on $\kappa$ extending the non-stationary ideal 
and the following fact, due to Shelah.

\begin{fact} \label{ap_fact}
  Suppose $\lambda < \kappa$ are regular cardinals, and suppose 
  $S \subseteq S^\kappa_{<\lambda}$ is stationary and $S \in I[\kappa]$. Suppose 
  moreover that $\bb{P}$ is a $\lambda$-closed forcing notion. Then $S$ remains 
  stationary in $V^{\bb{P}}$.
\end{fact}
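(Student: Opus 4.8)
The plan is to show that $S$ meets every club of $V^{\mathbb{P}}$ by a density argument carried out in $V$. Fix a condition $p$ and a $\mathbb{P}$-name $\dot{E}$ such that $p \Vdash$ ``$\dot{E}$ is a club in $\kappa$''; it suffices to produce $q \leq p$ and $\alpha \in S$ with $q \Vdash \alpha \in \dot{E}$. I would first invoke the standard characterization of $I[\kappa]$ via approachability (see \cite{eisworth}): fix a sequence $\vec{a} = \langle a_\beta \mid \beta < \kappa \rangle$ of bounded subsets of $\kappa$ and a club $C \subseteq \kappa$ such that every $\alpha \in S \cap C$ is approachable with respect to $\vec{a}$, i.e.\ there is a cofinal $A \subseteq \alpha$ with $\otp(A) = \cf(\alpha)$ all of whose proper initial segments appear as $a_\beta$ for some $\beta < \alpha$. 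Fix a large regular $\theta$, a well-ordering $\vartriangleleft$ of $H_\theta$, and (using that the points $M \cap \kappa$ of a continuous elementary chain form a club, intersected with the stationary set $S \cap C$) an elementary submodel $M \prec (H_\theta, \in, \vartriangleleft)$ with $\vec{a}, C, S, \mathbb{P}, p, \dot{E} \in M$ and $\alpha := M \cap \kappa \in S \cap C$. Since $S \subseteq S^\kappa_{<\lambda}$, we have $\rho := \cf(\alpha) < \lambda$, and approachability supplies a cofinal $A = \langle A(j) \mid j < \rho \rangle \subseteq \alpha$ each of whose proper initial segments, being some $a_\beta$ with $\beta < \alpha = M \cap \kappa$, lies in $M$.

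The core of the argument is then to build, by recursion on $j < \rho$, a $\leq$-decreasing sequence of conditions $\langle q_j \mid j < \rho \rangle$ below $p$ together with ordinals $\epsilon_j < \alpha$ so that $q_j \Vdash \epsilon_j \in \dot{E}$ and $\epsilon_j > A(j)$, keeping each $q_j$ and the entire construction-so-far inside $M$. At successor steps, elementarity of $M$ and the fact that $\dot{E}$ is forced to be unbounded let me choose the $\vartriangleleft$-least $q_{j+1} \leq q_j$ and $\epsilon_{j+1} \in M \cap \kappa = \alpha$ with $\epsilon_{j+1} > \max(\epsilon_j, A(j+1))$ and $q_{j+1} \Vdash \epsilon_{j+1} \in \dot{E}$. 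At a limit stage $j < \rho$, the sequence $\langle q_i \mid i < j \rangle$ is $\leq$-decreasing of length $j < \lambda$, so the $\lambda$-closure of $\mathbb{P}$ guarantees a lower bound, from which I extract $q_j$ as above. Since $\alpha = \sup_j A(j)$ and $A(j) < \epsilon_j < \alpha$, the $\epsilon_j$ are cofinal in $\alpha$.

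The step I expect to be the main obstacle is verifying, at each limit $j < \rho$, that the partial construction $\langle (q_i, \epsilon_i) \mid i < j \rangle$ actually belongs to $M$, which is what allows the lower bound furnished by $\lambda$-closure to be found \emph{inside} $M$ (and hence forces the $\epsilon_i$ to stay below $\alpha$). This is precisely where approachability is used: making every choice $\vartriangleleft$-least renders the partial construction the unique object satisfying the recursion, definable in $H_\theta$ from $\vartriangleleft, p, \dot{E}, \mathbb{P}$ and the initial segment $\langle A(i) \mid i < j \rangle$, and this last parameter lies in $M$ because it equals some $a_\beta$ with $\beta < \alpha$; elementarity then gives $\langle (q_i, \epsilon_i) \mid i < j \rangle \in M$. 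Finally, since $\rho < \lambda$, the full sequence $\langle q_j \mid j < \rho \rangle$ has a lower bound $q \leq p$ in $V$; as $q \leq q_j \Vdash \epsilon_j \in \dot{E}$ for every $j$ and $\dot{E}$ is forced to be closed with $\sup_j \epsilon_j = \alpha$, we get $q \Vdash \alpha \in \dot{E}$. Because $\alpha \in S$, this shows $q \Vdash \dot{E} \cap S \neq \emptyset$, and as $p$ and $\dot{E}$ were arbitrary, $S$ remains stationary in $V^{\mathbb{P}}$.
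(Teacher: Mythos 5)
The paper does not prove this statement at all: it is quoted as a black-box fact due to Shelah, with the reader referred to Eisworth's handbook chapter, so there is no in-paper argument to compare against. Your proof is the standard one for this preservation theorem and is correct: the characterization of $I[\kappa]$ via an approachability sequence $\vec{a}$ and a club $C$, the choice of $M \prec H_\theta$ with $\alpha = M \cap \kappa \in S \cap C$, and the recursion of length $\cf(\alpha) < \lambda$ in which $\vartriangleleft$-least choices make each proper initial segment of the construction definable from parameters in $M$ (the key one being $\langle A(i) \mid i < j\rangle = a_\beta$ for some $\beta < \alpha$), so that the lower bounds supplied by $\lambda$-closure can be found inside $M$ and the decided ordinals stay below $\alpha$. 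The final lower bound of the full $\rho$-sequence, which need not lie in $M$, then forces $\alpha \in \dot{E} \cap S$, as you say. You have correctly identified the one genuinely delicate point, namely keeping the limit stages inside $M$, and handled it in the standard way.
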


If $\mu$ is an uncountable cardinal, then $\mathrm{AP}_\mu$ is the assertion 
that $\mu^+ \in I[\mu^+]$.

\section{Souslin trees with stationary reflection} \label{con_sect}

In this section, we prove Theorem \ref{con_thm}. We prove (1) in detail 
and indicate how to modify the argument for (2) and (3).

Suppose that, in a model $V_0$ of ZFC, $\langle \lambda_n \mid n < \omega \rangle$ is an increasing 
sequence of cardinals such that $\lambda_0 = \aleph_0$ and, for all $0 < n < \omega$, 
$\lambda_n$ is supercompact. Assume $\mathrm{GCH}$ holds. Let $\mu = \sup(\{\kappa_n \mid n < \omega\})$, and let 
$\kappa = \mu^+$. Define a forcing iteration $\langle \bb{P}_m, \dot{\bb{Q}}_n
\mid m \leq \omega, n < \omega \rangle$, taken with full supports, by letting, for all 
$n < \omega$, $\dot{\bb{Q}}_n$ be a $\bb{P}_n$-name for $\mathrm{Coll}(\lambda_n, < \lambda_{n+1})$. 
Let $\bb{P} = \bb{P}_\omega$, let $G$ be $\bb{P}$-generic over $V_0$, and let $V = V_0[G]$. For $n < \omega$, 
let $\dot{\bb{P}}^n$ be such that $\bb{P} \cong \bb{P}_n * \dot{\bb{P}}^n$ and let $G_n$ and $G^n$ be the 
generic filters induced by $G$ on $\bb{P}_n$ and $\bb{P}^n$, respectively.
In $V$, we have the following situation:
\begin{itemize}
  \item $\lambda_n = \aleph_n$ for all $n < \omega$;
  \item $\mu = \aleph_\omega$;
  \item $\kappa = \aleph_{\omega + 1}$;
  \item $\mathrm{GCH}$ holds;
  \item $\mathrm{AP}_{\aleph_\omega}$ holds.
\end{itemize}

The first four items follow from standard arguments. For the proof of $\mathrm{AP}_{\aleph_\omega}$, 
see \cite{magidor}.

\begin{lemma} \label{indestructibility_lemma}
  Suppose $n < \omega$ and, in $V$, $\bb{U}$ is an $\aleph_{n+1}$-directed closed 
  forcing poset. Then, in $V^{\bb{U}}$, $\mathrm{Refl}(S^{\kappa}_{<\aleph_n})$ holds.
  In fact, for every stationary $S \subseteq S^\kappa_{<\aleph_n}$, $S$ reflects 
  at an ordinal $\beta \in S^\kappa_{\aleph_n}$.
\end{lemma}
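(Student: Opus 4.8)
The plan is to prove reflection in $V^{\bb{U}} = V[H]$ (with $H$ being $\bb{U}$-generic over $V$) by lifting a ground-model supercompactness embedding through $\bb{P} * \bb{U}$. The case $n = 0$ is vacuous, since $S^\kappa_{<\aleph_0} = \emptyset$, so assume $1 \le n < \omega$ and recall that $\lambda_{n+1}$ is supercompact in $V_0$. The choice of critical point is dictated by the closure of $\bb{U}$: since $\bb{U}$ is $\aleph_{n+1} = \lambda_{n+1}$-directed closed and the tail $\dot{\bb{P}}^{n+1}$ is $\lambda_{n+1}$-directed closed, an embedding with critical point $\lambda_{n+1}$ is the natural candidate to lift; and since $\aleph_n < \lambda_{n+1}$, the ordinal $\aleph_n$ is fixed, so a reflection point of cofinality $\aleph_n$ will transfer cleanly by elementarity.

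First I would fix in $V_0$ a supercompactness embedding $j : V_0 \to M$ with $\mathrm{crit}(j) = \lambda_{n+1}$, with $M$ closed under $\theta$-sequences and $j(\lambda_{n+1}) > \theta$ for some $\theta \ge \max(2^\kappa, |\bb{U}|)$. Working in $V[H]$, let $S \subseteq S^\kappa_{<\aleph_n}$ be stationary. Rather than building the lift inside $V[H]$ (where the required $M$-generic for the image collapse cannot simply be diagonalized into existence), I would pass to a further extension: factoring $\bb{P} = \bb{P}_{n+1} * \dot{\bb{P}}^{n+1}$, with $\bb{P}_{n+1}$ the part turning $\lambda_{n+1}$ into $\aleph_{n+1}$, I would force over $V[H]$ with the quotient $\bb{R}$ that adjoins an $M$-generic for $j(\bb{P} * \bb{U})$ extending $j[G * H]$. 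Here $\bb{R}$ decomposes into an $\aleph_n$-closed collapse-completion, absorbing $j(\mathrm{Coll}(\lambda_n, <\lambda_{n+1})) = \mathrm{Coll}(\lambda_n, <j(\lambda_{n+1}))$, and an $\aleph_{n+1}$-closed remainder absorbing $\dot{\bb{P}}^{n+1} * \dot{\bb{U}}$ via a master condition below $j[G^{n+1} * H]$ — which exists because $j[G^{n+1} * H]$ is a directed subset of size $\le \theta < j(\lambda_{n+1})$ captured by the closure of the target model. In the resulting $V[H][K]$ we obtain a lift $\hat{j} : V[H] \to N$. I expect the construction of $\bb{R}$, and the verification that $j[G * H]$ generates and is bounded by the needed generic, to be the main technical obstacle.

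Now I would read off reflection. By Fact \ref{ap_fact}, together with $\mathrm{AP}_{\aleph_\omega}$ — which continues to hold in $V[H]$ (being preserved by the highly closed $\bb{U}$) and makes $\kappa \in I[\kappa]$, hence $S \in I[\kappa]$ — and the $\aleph_n$-closure of $\bb{R}$, the set $S$ remains stationary in $V[H][K]$. Setting $\gamma = \sup \hat{j}[\kappa]$, a pressing-down argument shows $\hat{j}(S) \cap \gamma$ is stationary in $\gamma$: $\hat{j}[\kappa]$ is cofinal in $\gamma$ and closed under limits of cofinality $< \aleph_{n+1}$, and for any club $E$ of $\gamma$ in $N$ the $\hat{j}$-preimages of the limit points of $E$ form a club of $\kappa$ that meets the still-stationary $S$. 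Since $\cf^N(\gamma) = \kappa < \hat{j}(\aleph_{n+1})$, the model $N$ satisfies that $\hat{j}(S)$ reflects at a point of cofinality below $\hat{j}(\aleph_{n+1})$; applying elementarity of $\hat{j} : V[H] \to N$ then shows that, back in $V[H]$, $S$ reflects at a point of cofinality below $\aleph_{n+1}$, that is, of cofinality at most $\aleph_n$.

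Finally I would sharpen ``at most $\aleph_n$'' to ``exactly $\aleph_n$'', and this is the most delicate step. The point $\gamma$ cannot itself be the witness, as $\cf^N(\gamma) = \kappa$, and the points of cofinality $\aleph_n$ below $\gamma$ are precisely the images $\hat{j}(\bar\beta)$ with $\cf(\bar\beta) = \aleph_n$, at which reflection of $\hat{j}(S)$ is, by elementarity, equivalent to reflection of $S$ at $\bar\beta$ — so those points yield nothing new. One must instead locate a genuine cofinality-$\aleph_n$ reflection point for $\hat{j}(S)$ inside $N$, and it is exactly here that approachability is indispensable: using the approachability sequence witnessing $\mathrm{AP}_{\aleph_\omega}$ (and again Fact \ref{ap_fact} to preserve the relevant local stationarity) one produces such a point, whose pullback under $\hat{j}$ — legitimate since $\hat{j}$ fixes $\aleph_n$ — is the desired $\beta \in S^\kappa_{\aleph_n}$.
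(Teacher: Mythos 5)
Your overall strategy — lift the $\lambda_{n+1}$-supercompactness embedding through $\bb{P} * \dot{\bb{U}}$ by absorbing the tail and $\bb{U}$ into $j(\mathrm{Coll}(\lambda_n, <\lambda_{n+1}))$ via a master condition, preserve the stationarity of $S$ through the ($\aleph_n$-closed) quotient using $S \in I[\kappa]$ and Fact \ref{ap_fact}, and then reflect at $\gamma = \sup \hat{j}[\kappa]$ — is exactly the paper's argument. But your final step contains a genuine error that then creates a spurious gap. You assert $\cf^N(\gamma) = \kappa$ and conclude that $\gamma$ ``cannot itself be the witness.'' This is wrong: in $N$ the ordinal $\kappa$ lies strictly between $\lambda_n$ and $j(\lambda_{n+1})$, so $j(\bb{Q}_n) = \mathrm{Coll}(\lambda_n, < j(\lambda_{n+1}))$ collapses $\kappa$ to cardinality $\aleph_n$; since that collapse is $\lambda_n$-closed and $\kappa$ was regular, $\cf^N(\kappa) = \cf^N(\gamma) = \aleph_n$ exactly. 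Thus $\gamma$ \emph{is} the desired reflection point of cofinality $\aleph_n$ for $\hat{j}(S)$, and elementarity (noting $\hat{j}(\aleph_n) = \aleph_n$) immediately gives a $\beta \in S^\kappa_{\aleph_n}$ at which $S$ reflects. The paper runs this same computation contrapositively: assuming $S$ reflects at no point of $S^\kappa_{\aleph_n}$, it finds a club $D \subseteq \eta = \sup j``\kappa$ of order type $\lambda_n$ disjoint from $j(S)$, pulls it back along $j$ (using continuity of $j$ at points of cofinality $< \lambda_n$) to a club of $\kappa$ disjoint from $S$, and contradicts the approachability-based stationarity preservation.

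Because you miss that $\cf^N(\gamma) = \aleph_n$, your proposed ``sharpening'' step — invoking the approachability sequence a second time to ``produce'' a cofinality-$\aleph_n$ reflection point inside $N$ — is both unnecessary and unsupported: you give no construction, and it is not clear how $\mathrm{AP}_{\aleph_\omega}$ alone would manufacture such a point without the observation about $\gamma$. (Your parenthetical claim that the cofinality-$\aleph_n$ ordinals below $\gamma$ are precisely the images $\hat{j}(\bar\beta)$ with $\cf(\bar\beta) = \aleph_n$ is also false — many such ordinals are not in the range of $\hat{j}$, e.g.\ ordinals whose $V[H]$-cofinality was collapsed down to $\aleph_n$ — though this does not affect the repair.) The only place approachability is genuinely needed is where you already use it: to keep $S$ stationary after forcing with the $\lambda_n$-closed quotient.
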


\begin{proof}
  Let $j:V_0 \rightarrow M$ witness that $\lambda_{n+1}$ is $|\bb{P} * \dot{\bb{U}}|$-supercompact. 
  Since $|\bb{P}_n| < \lambda_{n+1}$, we can lift $j$ to $j:V_0[G_n] \rightarrow M[G_n]$. 
  $j(\bb{Q}_n) = \mathrm{Coll}(\lambda_n, < j(\lambda_{n+1}))$. By standard arguments, a lemma from \cite{magidor} implies that 
  $j(\bb{Q}_n) \cong \bb{P}^n * \dot{\bb{U}} * \dot{\bb{R}}$, where $\dot{\bb{R}}$ is a name for 
  a $\lambda_n$-closed forcing poset. Thus, letting $H$ be $\bb{U}$-generic over $V_0[G]$ and 
  $I$ be $\bb{R}$-generic over $V_0[G*H]$, we can lift $j$ further to $j:V_0[G_{n+1}] 
  \rightarrow M[G*H*I]$. Since $j(\bb{P}^{n+1}*\dot{\bb{U}})$ is $j(\kappa_{n+1})$-directed closed 
  in $M[G*H*I]$, we can find a lower bound $(q^*, u^*) \in j(\bb{P}^{n+1})$ to $\{j((q,u)) \mid (q,u) 
    \in G^{n+1}*H \}$. Then, letting $G^+ * H^+$ be $j(\bb{P}^{n+1}*\dot{\bb{U}})$-generic over 
  $V_0[G*H*I]$ with $(q^*, u^*) \in G^+ * H^+$, we can extend $j$ once more to 
  $j:V_0[G*H] \rightarrow M[G*H*I*G^+*H^+]$.

  Suppose for sake of contradiction that, in $V_0[G*H]$, $S$ is a stationary subset of 
  $S^\kappa_{<\aleph_n}$ that does not reflect at any ordinal in $S^\kappa_{\aleph_n}$. 
  Then, in $M[G*H*I*G^+*H^+]$, $j(S)$ is 
  a stationary subset of $j(\kappa)$ that does not reflect at any ordinal in $S^{j(\kappa)}_{\aleph_n}$. 
  In particular, if $\eta = \sup(j``\kappa)$, then, in $M[G*H*I*G^+*H^+]$, $\cf(\eta) = \aleph_n$, 
  so $j(S) \cap \eta$ is non-stationary in $\eta$. Let $D$ be club in $\eta$ such 
  that $\otp(D) = \lambda_n$ and $D \cap j(S) = \emptyset$. Let $E = 
  \{\alpha < \kappa \mid j(\alpha) \in D\}$. Then $E \cap S = \emptyset$ and, since 
  $j$ is continuous at points of cofinality $<\lambda_n$, $E$ is club in $\kappa$. 
  Thus, $S$ is non-stationary in $V_0[G*H*I*G^+*H^+]$. In $V_0[G*H]$, $S$ is a stationary 
  subset of $S^\kappa_{<\aleph_n}$. In $V_0[G]$, $S^\kappa_{<\aleph_n} \in I[\kappa]$. 
  Since $H$ does not change any cofinalities below $\aleph_n$, we still have 
  $S^\kappa_{\aleph_n} \in I[\kappa]$ in $V_0[G*H]$. In particular, $S \in I[\kappa]$. 
  Moreover, $I*G^+*H^+$ is generic for $\lambda_n$-closed forcing, so, by Fact \ref{ap_fact}, $S$ remains 
  stationary in $V_0[G*H*I*G^+*H^+]$, which is a contradiction.
\end{proof}

Work now in $V$. Let $\vec{\mu} = \langle \aleph_{i+1} \mid i < \omega \rangle$, let 
$\bb{S}_0 = \bb{S}^{\mathrm{ind}}_{\vec{\mu}}$, and let $\bb{S}_1 = \bb{S}(\kappa)$. 
Let $\bb{S} = \bb{S}_0 \times \bb{S}_1$.

In $V^\bb{S}$, let $\vec{C} = \langle C_{\alpha, i} \mid \alpha < \kappa, i(\alpha) 
\leq i < \omega \rangle$ be the generic $\square^{\mathrm{ind}}_{\mu, \omega}$-sequence 
introduced by $\bb{S}_0$, and let $\vec{D} = \langle D_\alpha \mid \alpha < \kappa \rangle$ 
be the generic $\square(\kappa)$-sequence introduced by $\bb{S}_1$. For $i < \omega$, 
let $\bb{T}_{0, i} = \bb{T}_{0, i}(\vec{C})$, let $\bb{T}_1 = \bb{T}_1(\vec{D})$, and let 
$\bb{T}(i) = \bb{T}_{0, i} \times \bb{T}_1$. By Lemma \ref{dense_closed}, 
$\bb{S} * \dot{\bb{T}}(i)$ has a dense $\aleph_{i+1}$-directed closed subset.

The proof of the following lemma is as in Lemma 9.8 of \cite{cfm}.

\begin{lemma}
  Let $i < j < \omega$. In $V^\bb{S}$, define a map $\pi_{ij}:\bb{T}(i) \rightarrow 
  \bb{T}(j)$ by letting, for all $(C_{\alpha, i}, D_\beta) \in \bb{T}(i)$, 
  $\pi_{ij}((C_{\alpha,i}, D_\beta)) = (C_{\alpha, j}, D_\beta)$. Then $\pi_{ij}$ 
  is a projection.
\end{lemma}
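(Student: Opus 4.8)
The plan is to factor $\pi_{ij}$ through its coordinates. Since $\pi_{ij}$ fixes the $\bb{T}_1$-coordinate, we have $\pi_{ij} = \pi^0_{ij} \times \mathrm{id}_{\bb{T}_1}$, where $\pi^0_{ij}:\bb{T}_{0,i} \rightarrow \bb{T}_{0,j}$ sends $C_{\alpha,i}$ to $C_{\alpha,j}$. As the product of a projection with an identity map is again a projection, it suffices to show that $\pi^0_{ij}$ is a projection. The map is well defined: if $C_{\alpha,i} \in \bb{T}_{0,i}$ then $\alpha$ is a limit ordinal with $i(\alpha) \leq i < j$, so by clause (2) of the definition of a $\square^{\mathrm{ind}}_{\mu,\cf(\mu)}$-sequence, $C_{\alpha,j}$ is a club in $\alpha$, and since $i(\alpha) \leq j$ we have $C_{\alpha,j} \in \bb{T}_{0,j}$.

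Order-preservation is immediate from coherence. Suppose $C_{\beta,i} \leq C_{\alpha,i}$, i.e.\ $\alpha \in \acc(C_{\beta,i})$. By clause (4) we have $C_{\beta,i} \subseteq C_{\beta,j}$, so $\alpha$ remains an accumulation point of the larger club $C_{\beta,j}$; thus $\alpha \in \acc(C_{\beta,j})$ and $C_{\beta,j} \leq C_{\alpha,j}$, as required.

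The real content lies in the pullback property and the density of the range, both of which I would reduce to the following combinatorial fact about the generic sequence $\vec{C}$: \emph{for all limit $\alpha < \beta < \kappa$ with $i(\alpha) \leq i$ and $\alpha \in \acc(C_{\beta,j})$, there is a limit ordinal $\gamma > \beta$ with $i(\gamma) \leq i$, $\alpha \in \acc(C_{\gamma,i})$, and $\beta \in \acc(C_{\gamma,j})$.} (This is a statement purely about $\vec{C}$, computed identically in $V^{\bb{S}_0}$ and $V^{\bb{S}}$, so it suffices to verify it in $V^{\bb{S}_0}$.) Granting it, the pullback property is immediate: given $C_{\alpha,i} \in \bb{T}_{0,i}$ and $C_{\beta,j} \leq \pi^0_{ij}(C_{\alpha,i}) = C_{\alpha,j}$, which means $\alpha \in \acc(C_{\beta,j})$, the ordinal $\gamma$ gives $C_{\gamma,i} \leq C_{\alpha,i}$ (since $\alpha \in \acc(C_{\gamma,i})$) with $\pi^0_{ij}(C_{\gamma,i}) = C_{\gamma,j} \leq C_{\beta,j}$ (since $\beta \in \acc(C_{\gamma,j})$). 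Density of the range is the same statement with the hypothesis on $\alpha$ weakened: given $C_{\alpha,j} \in \bb{T}_{0,j}$, one produces $\gamma > \alpha$ with $i(\gamma) \leq i$ and $\alpha \in \acc(C_{\gamma,j})$, which puts $C_{\gamma,j} \leq C_{\alpha,j}$ in the range of $\pi^0_{ij}$; this is proved by the same method.

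I would establish the combinatorial fact by a density argument in $\bb{S}_0$, exactly as in the construction underlying Lemma 9.6 and the projection of Lemma 9.8 of \cite{cfm}. Given a condition $s$ with $\gamma^s \geq \beta$, $\alpha \in \acc(C^s_{\beta,j})$, and $i(\alpha)^s \leq i$, one extends $s$ to a condition $t$ with a new top $\gamma^t = \gamma^s + \omega$ and index $i(\gamma^t)^t \leq i$, defining the clubs $\langle C^t_{\gamma^t,k} \mid i(\gamma^t)^t \leq k < \omega \rangle$ so that $C^t_{\gamma^t,i}$ accumulates at $\alpha$ with $C^t_{\gamma^t,i} \cap \alpha = C^s_{\alpha,i}$, $C^t_{\gamma^t,j}$ accumulates at $\beta$ with $C^t_{\gamma^t,j} \cap \beta = C^s_{\beta,j}$, and the clubs for large $k$ accumulate at $\gamma^s$ with $C^t_{\gamma^t,k} \cap \gamma^s = C^s_{\gamma^s,k}$. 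The hard part — and the only genuinely delicate point — is verifying clause (6) for the new top: every limit $\delta < \gamma^t$ must be an accumulation point of some $C^t_{\gamma^t,k}$. For $\delta \in \{\alpha, \beta, \gamma^s\}$ this is arranged directly, and for the remaining $\delta < \gamma^s$ it is inherited from clause (6) for $s$ via the coherence of these routings together with clause (4) (accumulation at level $k$ propagates upward). The subtlety is interleaving the three routings — through $\alpha$ at level $i$, through $\beta$ at level $j$, and through $\gamma^s$ at the higher levels — consistently with the $\subseteq$-increasing requirement (4) and the order-type bounds (3); since $\gamma^t = \gamma^s + \omega$, each order type grows by only $\omega$ and stays below the relevant $\mu_k$. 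This extension is routine but notation-heavy and is carried out in full in \cite{cfm}.
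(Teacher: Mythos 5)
Your proposal is correct and follows essentially the same route as the paper, which simply defers to Lemma 9.8 of \cite{cfm}: the reduction to the $\bb{T}_{0,i}$-coordinate, the coherence/monotonicity checks, and the density argument in $\bb{S}_0$ that routes the new top ordinal through $\alpha$ at level $i$, through $\beta$ at level $j$, and through $\gamma^s$ at higher levels is exactly the standard argument from that reference. The delicate points you flag (clause (6) at the new top and compatibility with the $\subseteq$-increasing requirement) are indeed the only ones requiring care, and your handling of them is sound.
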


\begin{definition}
  In $V^{\bb{S}}$ or any forcing extension thereof, we say that a subset $S \subseteq \kappa$ 
  is \emph{fragile} if, for all $i < \omega$, $\Vdash_{\bb{T}(i)}``S$ is non-stationary in 
  $\kappa."$
\end{definition}

\begin{remark}
  If $i < j < \omega$, $S \subseteq \kappa$, $t \in \bb{T}(i)$, and 
  $t \Vdash_{\bb{T}(i)} ``S$ is stationary$."$, then, as $\pi_{ij}$ is a projection, 
  $\pi_{ij}(t) \Vdash_{\bb{T}(j)} ``S$ is stationary$."$ Thus, if $S$ is not fragile, 
  then, for all sufficiently large $i < \omega$, there is $t \in \bb{T}(i)$ such that 
  $t \Vdash_{\bb{T}(i)}``S$ is stationary$."$
\end{remark}

In $V^{\bb{S}}$, recursively define posets $\langle \bb{R}_\eta \mid \eta \leq \kappa^+ \rangle$
and names $\langle \dot{S}_\eta \mid \eta < \kappa^+ \rangle$ such that:
\begin{enumerate}
  \item for all $\eta < \kappa^+$, $\dot{S}_\eta$ is an $\bb{R}_\eta$-name for a 
    fragile subset of $\kappa$;
  \item for all $\eta < \kappa^+$, conditions of $\bb{R}_\eta$ are all functions $r$ such that:
    \begin{enumerate}
      \item $\dom(r) \subseteq \eta$;
      \item $|\dom(r)| \leq \mu$;
      \item for all $\xi \in \dom(r)$, $r(\xi)$ is a closed, bounded subset of $\kappa$ 
        and $r \restriction \xi \Vdash_{\bb{R}_\xi} ``r(\xi) \cap \dot{S}_\xi = \emptyset"$;
    \end{enumerate}
  \item for all $\eta < \kappa^+$, if $r_0, r_1 \in \bb{R}_\eta$, then $r_1 \leq r_0$ if:
    \begin{enumerate}
      \item $\dom(r_0) \subseteq \dom(r_1)$;
      \item for all $\xi \in \dom(r_0)$, $r_1(\xi)$ end-extends $r_0(\xi)$.
    \end{enumerate}
\end{enumerate}

Let $\bb{R} = \bb{R}_{\kappa^+}$.
We will show that each $\bb{R}_\eta$ is $\kappa$-distributive and, therefore, for all 
$\eta < \kappa^+$, $\bb{R}_{\eta + 1} \cong \bb{R}_\eta * \mathrm{CU}(\kappa \setminus \dot{S}_\eta)$, 
where $\mathrm{CU}(T)$ is the standard forcing poset for shooting a club through $T$. 
Also, standard arguments show that $\bb{R}$ has the $\kappa^+$-c.c. Therefore, by 
employing a sufficient bookkeeping apparatus in our choice of $\langle \dot{S}_\eta \mid \eta < \kappa^+ \rangle$, 
we may arrange so that, in $V^{\bb{S} * \dot{\bb{R}}}$, 
for all $T \subseteq \kappa$, if $T$ is fragile, then $T$ is non-stationary.

Moving back to $V$, for each $\eta < \kappa^+$ and $i < \omega$, let $\dot{E}_{\eta, i}$ be 
an $\bb{S} * \dot{\bb{R}}_\eta * \dot{\bb{T}}(i)$-name for a club in $\kappa$ disjoint 
from $\dot{S}_\eta$. If $i < j < \omega$, then, since $\pi_{ij}$ is a projection from $\bb{T}(i)$ to 
$\bb{T}(j)$ in $V^{\bb{S}}$, we may also consider $\dot{E}_{\eta, j}$ as an 
$\bb{S} * \dot{\bb{R}}_\eta * \dot{\bb{T}}(i)$-name and assume that 
$\Vdash_{\bb{S} * \dot{\bb{R}}_\eta * \dot{\bb{T}}(i)}``\dot{E}_{\eta, i} \subseteq \dot{E}_{\eta, j}."$

\begin{lemma} \label{dense_closed_2}
  For all $i < \omega$ and $\eta \leq \kappa^+$, $\bb{S} * \dot{\bb{R}}_\eta * \dot{\bb{T}}(i)$ has a 
  dense $\aleph_{i+1}$-directed closed subset.
\end{lemma}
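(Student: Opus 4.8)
The plan is to prove Lemma \ref{dense_closed_2} by induction on $\eta$, at each stage exhibiting an explicit dense $\aleph_{i+1}$-directed closed subset $\bb{U}_\eta \subseteq \bb{S} * \dot{\bb{R}}_\eta * \dot{\bb{T}}(i)$ that refines the poset $\bb{U}$ produced by Lemma \ref{dense_closed}. The base case $\eta = 0$ is exactly Lemma \ref{dense_closed}, since $\bb{R}_0$ is trivial. For general $\eta$, I would let $\bb{U}_\eta$ consist of those conditions $p = (s, \dot r, \dot t)$ such that, writing $s = (s_0, s_1)$, the pair $(s, \dot t)$ lies in $\bb{U}$ — so the $\bb{S} * \dot{\bb{T}}(i)$-part is \emph{synchronized}, with a common top $\gamma^p := \gamma^{s_0} = \gamma^{s_1}$ and $\dot t$ the canonical thread read off from $s$ — together with the requirement that for every $\xi \in \dom(\dot r)$ one has $\max(\dot r(\xi)) = \gamma^p$ and $p \restriction (\bb{S} * \dot{\bb{R}}_\xi * \dot{\bb{T}}(i))$ forces $\gamma^p \in \dot E_{\xi, i}$ (hence $\gamma^p \notin \dot S_\xi$). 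Here I use that $\bb{R}_\eta$ is $\kappa$-distributive, so that conditions of $\bb{T}(i)$ and the clubs $\dot E_{\xi,i}$ may be treated as bounded objects already present in $V^{\bb{S}}$; this makes the reordering of $\dot{\bb{R}}_\eta$ and $\dot{\bb{T}}(i)$ implicit in the reference to $\bb{U}$ harmless.

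Directed closure is where the clubs $\dot E_{\xi,i}$ do their work. Given a directed family $\{p_k \mid k < \chi\}$ with $\chi < \aleph_{i+1}$, I would first apply the $\aleph_{i+1}$-directed closure of $\bb{U}$ (Lemma \ref{dense_closed}) to the $\bb{S} * \dot{\bb{T}}(i)$-parts to obtain a lower bound $(\bar s, \bar t) \in \bb{U}$ with common top $\bar\gamma = \sup_k \gamma^{p_k}$; note $\bar\gamma < \kappa$ since $\chi < \aleph_{i+1} < \kappa$ and $\kappa$ is regular. For each $\xi$ in the (size $\leq \mu$) union $\bigcup_k \dom(\dot r_k)$, set $\bar r(\xi) = \bigcup_k \dot r_k(\xi) \cup \{\bar\gamma\}$. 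The key point is that $\bar\gamma \notin \dot S_\xi$: the tops $\gamma^{p_k}$ all lie in the club $\dot E_{\xi,i}$, so their supremum $\bar\gamma$ does as well, and $\dot E_{\xi,i}$ is disjoint from $\dot S_\xi$. Closedness of $\bar r(\xi)$ below $\bar\gamma$ follows because the $\dot r_k(\xi)$ end-extend one another, and $(\bar s, \bar t)$ decides $\dot E_{\xi,i} \cap (\bar\gamma + 1)$ since below the $p_k$ every proper initial segment was already decided and $\bar\gamma$ is a limit of decided points. Thus $\bar p = (\bar s, \bar r, \bar t)$ is a condition of $\bb{U}_\eta$ below every $p_k$.

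For density I would start from an arbitrary condition, extend its $\bb{S} * \dot{\bb{T}}(i)$-part into $\bb{U}$, and then run a fusion of length $< \kappa$ — using the strategic closure of the synchronized part together with the inductive hypothesis applied at each coordinate $\xi < \eta$ to decide the names $\dot E_{\xi,i}$ — in order to drive the common top $\gamma$ up to a single ordinal lying simultaneously in all of the $\leq \mu$ clubs $\dot E_{\xi,i}$ with $\xi \in \dom(\dot r)$; this is possible because an intersection of $\leq \mu < \kappa$ clubs in $\kappa$ is again club. Redefining each $\dot r(\xi)$ to be its original value followed by $\dot E_{\xi,i} \cap (\max(\dot r(\xi)), \gamma]$ yields an end-extension with top $\gamma \in \dot E_{\xi,i}$, hence a condition of $\bb{U}_\eta$ below the original. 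I expect this density fusion to be the main obstacle: one must diagonalize against the $\leq \mu$ club-names $\dot E_{\xi,i}$ simultaneously while preserving both the synchronization of the $\bb{S} * \dot{\bb{T}}(i)$-part and the $\leq \mu$ bound on $\dom(\dot r)$, and the decidability of each $\dot E_{\xi,i}$ must be imported from the inductive hypothesis for $\bb{S} * \dot{\bb{R}}_\xi * \dot{\bb{T}}(i)$. Apart from this, the verifications are a routine elaboration of the arguments behind Lemma \ref{dense_closed} and the proof that $\bb{R}_\eta$ is $\kappa$-distributive.
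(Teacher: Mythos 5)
There is a genuine gap in your density argument, and it is exactly at the point you flag as "the main obstacle." Your plan is to run a fusion of length $<\kappa$ inside a single synchronized poset, keeping the $\bb{S} * \dot{\bb{T}}(i)$-part in $\bb{U}$ throughout, so as to drive the common top $\gamma$ into all $\leq \mu$ clubs $\dot{E}_{\xi,i}$ at once. But the $\bb{T}_{0,i}$-coordinate of a synchronized condition is (forced to be) $C^{s_0}_{\gamma, i}$, whose order type is $< \mu_i = \aleph_{i+1}$ by the definition of $\bb{S}^{\mathrm{ind}}_{\vec{\mu}}$. Consequently a decreasing chain of synchronized conditions with strictly increasing tops of length $\aleph_{i+1}$ has no lower bound: the union of the threads would have order type $\geq \aleph_{i+1}$ and so cannot be a condition of $\bb{T}_{0,i}$. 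The synchronized part is therefore not strategically closed beyond $\aleph_{i+1}$, and no fusion of length $\geq \aleph_{i+1}$ (let alone of length up to $\mu$, which your requirement that a single $\gamma = \max(r(\xi))$ lie in all $\leq\mu$ clubs would seem to demand) can be carried out in it. This is not a technical wrinkle but the central difficulty of the lemma.

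The paper's proof circumvents this with two ingredients your proposal lacks. First, the dense set $\bb{U}_{\eta,i}$ only requires, for each $\xi \in \dom(r)$ separately, that $\max(r(\xi))$ be forced into $\dot{E}_{\xi,i}$; it does not require these maxima to coincide with each other or with $\gamma^s$, so the successor step of the induction handles one coordinate at a time with a single extension and no fusion. Second, and crucially, at limit stages $\eta$ with $\aleph_0 \leq \cf(\eta) < \mu$ the paper passes to an index $j \geq i$ with $\cf(\eta) < \aleph_j$, builds its decreasing chain of length $\cf(\eta)$ in $\bb{U}_{\xi_k, j}$ (where the closure suffices), and then pulls the resulting lower bound back to $\bb{T}(i)$ using the projection $\pi_{ij}$ together with the arranged coherence $\Vdash \dot{E}_{\xi,i} \subseteq \dot{E}_{\xi,j}$. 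Without this passage to larger $j$ and the coherent system of club names, the induction cannot get past limit ordinals of cofinality $\geq \aleph_{i+1}$, so your argument as written does not establish density. (Your verification of $\aleph_{i+1}$-directed closure, by contrast, is essentially the paper's and is fine.)
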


\begin{proof}
  The ideas of this proof are largely derived from the ideas in Section 10 of \cite{cfm}. For 
  sake of completeness and because we have simplified some aspects of the arguments in \cite{cfm}, 
  we present the proof in some detail.

  For $i < \omega$, let $\bb{U}_{0,i}$ be the dense $\aleph_{i+1}$-directed closed subset of $\bb{S} * \dot{\bb{T}}(i)$ 
  given in the proof of Lemma \ref{dense_closed}. For $\eta < \kappa^+$ and $i < \omega$, let 
  $\bb{U}_{\eta, i}$ be the set of $(s, \dot{r}, \dot{t}) \in \bb{S} * \dot{\bb{R}}_\eta 
  * \dot{\bb{T}}(i)$ such that:
  \begin{itemize}
    \item $(s, \dot{t}) \in \bb{U}_{0,i}$;
    \item $s$ decides the value of $\dot{r}$, i.e. there is a function $r \in V$ such that 
      $s \Vdash ``\dot{r} = \check{r}"$;
    \item for all $\xi \in \dom(r)$, $(s, \dot{r} \restriction \xi, \dot{t}) 
      \Vdash_{\bb{S} * \dot{\bb{R}}_\xi * \dot{\bb{T}}(i)}``\max(r(\xi)) \in \dot{E}_{\xi, i}."$
  \end{itemize}
  The verification that $\bb{U}_{\eta, i}$ is $\aleph_{i+1}$-directed closed is straightforward. It  
  is thus sufficient to show that it is dense. We do this by induction on $\eta$, simultaneously 
  for all $i$.

  If $\eta = 0$, this follows from Lemma \ref{dense_closed}. Suppose $0 < \eta \leq \lambda^+$, $i < \omega$, and 
  we have proven that, for all $\xi < \eta$ and all $j < \omega$, $\bb{U}_{\xi, j}$ is dense in 
  $\bb{S} * \dot{\bb{R}}_\xi * \dot{\bb{T}}(j)$. Fix $(s_0, \dot{r}_0, \dot{t}_0) \in \bb{S} * 
  \dot{\bb{R}}_\eta * \dot{\bb{T}}(i)$. We may assume that $(s_0, \dot{t}_0) \in \bb{U}_{0, i}$ and, 
  since $\bb{S}$ is $\kappa$-distributive, that $s_0$ decides the value of $\dot{r}_0$ to be some 
  $r_0 \in V$. We will find $(s, \dot{r}, \dot{t}) \leq (s_0, \dot{r}_0, \dot{t}_0)$ with 
  $(s, \dot{r}, \dot{t}) \in \bb{U}_{\eta, i}$.

  \textbf{Case 1: $\eta = \xi + 1$.} By the inductive hypothesis, we can find 
  $(s_1, \dot{r}_1, \dot{t}_1) \leq (s_0, \dot{r}_0 \restriction \xi, \dot{t}_0)$ such 
  that $(s_1, \dot{r}_1, \dot{t}_1) \in \bb{U}_{\xi, i}$ and there is $\alpha > \max(r_0(\xi))$ 
  such that $(s_1, \dot{r}_1, \dot{t}_1) \Vdash``\alpha \in \dot{E}_{\eta, i}."$ Now form 
  $(s, \dot{r}, \dot{t})$ by letting $(s, \dot{t}) = (s_1, \dot{t}_1)$ and letting $\dot{r}$ 
  be such that $s \Vdash ``\dot{r} \restriction \xi = \dot{r}_1$ and $\dot{r}(\xi) = r_0(\xi) \cup \{\alpha\}."$
  $(s, \dot{r}, \dot{t})$ is easily seen to be in $\bb{U}_{\eta, i}$.

  \textbf{Case 2: $\cf(\eta) \geq \kappa$.} In this case, $\dom(r)$ is bounded below $\eta$, 
  so there is some $\xi < \eta$ such that $(s_0, \dot{r}_0, \dot{t}_0) \in \bb{S} * \dot{\bb{R}}_\xi 
  * \dot{\bb{T}}(i)$, and we may simply invoke the inductive hypothesis for $\xi$.

  \textbf{Case 3: $\aleph_0 \leq \cf(\eta) < \mu$.} Let $j < \omega$ be such that $i \leq j$ 
  and $\cf(\eta) < \aleph_j$. Let $\gamma_0 = \gamma^{s_0}$, and note that 
  $s_0 \Vdash ``\dot{t}_0 = C^{s_0}_{\gamma_0, i}."$ Let $\dot{t}^*_0$ be such that 
  $s_0 \Vdash ``\dot{t}^*_0 = C^{s_0}_{\gamma_0, j}$ and note that $(s_0, \dot{r}_0, \dot{t}^*_0) 
  \in \bb{S}* \dot{\bb{R}}_\eta * \dot{\bb{T}}(j)$. Let $\langle \xi_k \mid k < \cf(\eta) \rangle$ 
  be an increasing, continuous sequence of ordinals, cofinal in $\eta$. Recursively construct a 
  sequence of conditions $\langle (s^k, \dot{r}^k, \dot{t}^k) \mid k < \cf(\eta) \rangle$ such that:
  \begin{itemize}
    \item for all $k < \cf(\eta)$, $(s^k, \dot{r}^k, \dot{t}^k) \in \bb{U}_{\xi_k, j}$;
    \item for all $k < \cf(\eta)$, $(s^k, \dot{r}^k, \dot{t}^k) \leq (s_0, \dot{r}_0 \restriction 
      \xi_k, \dot{t}^*_0)$;
    \item for all $k_0 < k_1 < \cf(\eta)$, $(s^{k_1}, \dot{r}^{k_1}, \dot{t}^{k_1}) \leq 
      (s^{k_0}, \dot{r}^{k_0}, \dot{t}^{k_0})$;
  \end{itemize}
  The construction is straightforward by the inductive hypothesis and the closure of the relevant posets. 
  Now $\langle (s^k, \dot{r}^k, \dot{t}^k) \mid k < \cf(\eta) \rangle$ is a decreasing sequence 
  in $\bb{U}_{\eta, j}$, so, by the closure of $\bb{U}_{\eta, j}$, we may find a lower bound, 
  $(s^*, \dot{r}, \dot{t}^*) \in \bb{U}_{\eta, j}$. Now, using the fact that $\pi_{ij}$ is a projection 
  in $V^{\bb{S}}$, we may find $(s, \dot{t}) \in \bb{U}_{0, i}$ such that $(s, \dot{t}) \leq (s_0, \dot{t}_0)$ 
  and $(s, \pi_{ij}(\dot{t})) \leq (s^*, \dot{t}^*)$. Now, using the fact that, for all $\xi < \eta$,
  $\Vdash_{\bb{S} * \dot{\bb{R}}_\xi * \dot{\bb{T}}(i)}``\dot{E}_{\xi, i} \subseteq \dot{E}_{\xi, j},"$ we 
  have that $(s, \dot{r}, \dot{t}) \in \bb{U}_{\eta, i}$ and $(s, \dot{r}, \dot{t}) \leq (s_0, \dot{r}_0, \dot{t}_0)$.
\end{proof}

Let $H$ be $\bb{S}$-generic over $V$, and let $I$ be $\bb{R}$-generic over $V[H]$. We now argue that 
$V[H*I]$ satisfies the requirements of (1) in Theorem \ref{con_thm}. It is easily seen that, as GCH holds in $V$, 
it holds in $V[H*I]$ as well. We must therefore verify 
$\mathrm{Refl}(\aleph_{\omega + 1})$, $\boxtimes^{-}(\mathrm{NS}^+_{\aleph_{\omega+1}})$, 
and $\boxtimes^-_{\mu}(\{S^{\kappa}_{\aleph_n} \mid n < \omega\}, \omega)$.

\begin{lemma} \label{reflection_lem}
  $\mathrm{Refl}(\kappa)$ holds in $V[H*I]$. In fact, for every $i \leq j < \omega$ 
  and every stationary $S \subseteq S^\kappa_{<\aleph_i}$, $S$ reflects at an ordinal in $S^\kappa_{\aleph_j}$.
\end{lemma}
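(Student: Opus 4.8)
The plan is to reduce reflection in $V[H*I] = V^{\bb{S}*\dot{\bb{R}}}$ to the indestructibility already established in Lemma \ref{indestructibility_lemma}, by absorbing the two square sequences into a directed-closed forcing via a thread. Fix $i \leq j < \omega$ and a stationary $S \subseteq S^\kappa_{<\aleph_i}$ in $V[H*I]$; I want a reflection point for $S$ in $S^\kappa_{\aleph_j}$. Let $G_{T(j)}$ be $\bb{T}(j)$-generic over $V[H*I]$. By Lemma \ref{dense_closed_2} applied with $\eta = \kappa^+$, the poset $\bb{S}*\dot{\bb{R}}*\dot{\bb{T}}(j)$ has a dense $\aleph_{j+1}$-directed closed subset $\bb{U}_{\kappa^+,j}$, so $V[H*I][G_{T(j)}]$ is a generic extension of $V$ by an $\aleph_{j+1}$-directed closed forcing. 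Hence Lemma \ref{indestructibility_lemma} (with $n = j$) applies there: in $V[H*I][G_{T(j)}]$, every stationary subset of $S^\kappa_{<\aleph_j}$ reflects at an ordinal of $S^\kappa_{\aleph_j}$.

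Two things must then be arranged: (a) that $S$ remains stationary in $V[H*I][G_{T(j)}]$, and (b) that a reflection point found there witnesses reflection back in $V[H*I]$. For (b) I first note that $\bb{T}(j)$ is $\aleph_{j+1}$-distributive over $V[H*I]$: indeed $\bb{S}*\dot{\bb{R}}$ is $\kappa$-distributive, and $\bb{S}*\dot{\bb{R}}*\dot{\bb{T}}(j)$ is $\aleph_{j+1}$-distributive (having a dense $\aleph_{j+1}$-directed closed subset), so the quotient $\bb{T}(j)$ is $\aleph_{j+1}$-distributive in $V^{\bb{S}*\dot{\bb{R}}}$. Consequently $\bb{T}(j)$ adds no new sequence of length $\leq \aleph_j$ and so preserves the relation $\cf(\beta) = \aleph_j$; thus a reflection point $\beta \in S^\kappa_{\aleph_j}$ of $V[H*I][G_{T(j)}]$ already satisfies $\cf^{V[H*I]}(\beta) = \aleph_j$. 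Moreover, as $V[H*I] \subseteq V[H*I][G_{T(j)}]$, every club of $\beta$ in $V[H*I]$ is still a club in the extension, so stationarity of $S \cap \beta$ in $\beta$ descends from the extension to $V[H*I]$. Combined with (a), this produces the desired reflection point in $V[H*I]$.

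The heart of the matter is (a): that $\bb{T}(j)$ preserves the stationarity of $S$ for \emph{every} $j \geq i$. This is exactly where the fragility machinery and the iteration $\bb{R}$ are used. Since $S$ is stationary in $V[H*I]$ and the bookkeeping for $\bb{R}$ has rendered every fragile set non-stationary, $S$ is not fragile. Writing $\bb{T}(j) = \bb{T}_{0,j} \times \bb{T}_1$, I argue that neither coordinate can destroy $S$. The $\square(\kappa)$-thread $\bb{T}_1$ is common to all $\bb{T}(j)$, since the projections $\pi_{ij}$ are the identity on the $\bb{T}_1$-coordinate; so if forcing with $\bb{T}_1$ alone added a club disjoint from $S$, that club would survive into every $V[H*I][G_{T(j)}]$ and exhibit $S$ as fragile, contrary to assumption. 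Hence (using weak homogeneity of $\bb{T}_1$) $S$ stays stationary after $\bb{T}_1$. For the other coordinate I expect to use that the level-$j$ indexed-square thread $\bb{T}_{0,j}$, whose conditions have order type below $\mu_j = \aleph_{j+1}$, carries enough strategic closure — of a degree exceeding $\aleph_i$ precisely because $j \geq i$ — to preserve stationary subsets of $S^\kappa_{<\aleph_i}$; mutual genericity of the product then yields that $S$ is stationary in $V[H*I][G_{T(j)}]$, and weak homogeneity of $\bb{T}(j)$ upgrades ``some condition forces $S$ stationary'' to ``$S$ is stationary in the extension.'' I expect this preservation step to be the main obstacle, precisely because $\bb{T}(j)$ is only strategically (not genuinely) closed and can in general kill stationary sets: the whole point of the construction is that the sets it kills are exactly the fragile ones, which have already been removed when passing to $V[H*I]$.

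Finally, $\mathrm{Refl}(\kappa)$ follows from the displayed refinement. Given any stationary $T \subseteq \kappa$, note that $\kappa = \aleph_{\omega+1}$, so every ordinal below $\kappa$ has cofinality $\aleph_n$ for some $n < \omega$; thus $T = \bigcup_{n < \omega}(T \cap S^\kappa_{\aleph_n})$ and some piece $T \cap S^\kappa_{\aleph_n}$ is stationary. Applying the refinement to this piece with $i = j = n+1$ produces a reflection point, so $T$ reflects.
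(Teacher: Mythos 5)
Your overall architecture---absorb $\bb{S}*\dot{\bb{R}}$ together with a threading poset into an $\aleph_{j+1}$-directed closed forcing over $V$ via Lemma \ref{dense_closed_2}, apply Lemma \ref{indestructibility_lemma} there, and pull the reflection point back using distributivity and downward absoluteness of the stationarity of $S \cap \beta$---is exactly the paper's, and your part (b) and the final reduction of $\mathrm{Refl}(\kappa)$ to the refined statement are fine. The gap is in your step (a), which you yourself flag as the main obstacle and do not close. Two things go wrong. First, non-fragility of $S$ only gives you, for \emph{some} index $\ell$, a condition $t \in \bb{T}(\ell)$ forcing $S$ to be stationary; by the remark on the projections $\pi_{\ell'\ell}$ this propagates to all \emph{larger} indices, but there is no reason such a condition should exist in $\bb{T}(j)$ for the particular $j$ you fixed at the outset---the witnessing index may be much larger than $j$. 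Second, even granting such a $t \in \bb{T}(j)$, you need $S$ stationary in $V[H*I][G_{T(j)}]$ for the generic you already chose, and you invoke ``weak homogeneity'' of $\bb{T}_1$ and of $\bb{T}(j)$ to upgrade ``some condition forces $S$ stationary'' to ``$S$ is stationary in the extension.'' These threading posets are ordered by end-extension of the clubs appearing on the square sequences and have no evident homogeneity; different conditions genuinely can force different stationarity facts, which is precisely why the fragility apparatus quantifies over conditions the way it does. Likewise, the hoped-for strategic closure of the single factor $\bb{T}_{0,j}$ over $V[H*I]$ is not available: the threads are only well-behaved when fused with $\bb{S}*\dot{\bb{R}}$ into the dense directed closed subset $\bb{U}_{\kappa^+,j}$, and over $V[H*I]$ itself $\bb{T}(j)$ is merely distributive, which in general does not preserve stationary sets.

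The repair uses the freedom you are denying yourself: you need neither to force with $\bb{T}(j)$ itself nor with an arbitrary generic. Since $S$ is not fragile, choose $\ell > \max(i,j)$ and $t \in \bb{T}(\ell)$ with $t \Vdash_{\bb{T}(\ell)}$ ``$S$ is stationary'' (possible for all sufficiently large $\ell$ by the projection remark), and let $J$ be $\bb{T}(\ell)$-generic over $V[H*I]$ with $t \in J$. Then $S$ is stationary in $V[H*I*J]$, which by Lemma \ref{dense_closed_2} is an extension of $V$ by an $\aleph_{\ell+1}$-directed closed, hence $\aleph_{j+1}$-directed closed, forcing; Lemma \ref{indestructibility_lemma} with $n=j$ then yields a reflection point in $S^\kappa_{\aleph_j}$, and your pull-back argument finishes the proof. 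No homogeneity or preservation theorem for the threading posets is needed.
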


\begin{proof}
  Since, for every stationary $S \subseteq \kappa$, there is $i < \omega$ such that $S \cap S^\kappa_{<\aleph_i}$ 
  is stationary, it suffices to prove the second statement. Thus, fix $i \leq j < \omega$ and a 
  stationary $S \subseteq S^\kappa_{<\aleph_i}$. By the construction of $\bb{R}$, since $S$ is 
  stationary in $V[H*I]$, $S$ is not fragile. Therefore, we may find $\ell > \max(i, j)$ and 
  $t \in \bb{T}(\ell)$ such that $t \Vdash ``S$ is stationary$."$ Let $J$ be $\bb{T}(\ell)$-generic 
  over $V[H*I]$ with $t \in J$. $S$ is thus stationary in $V[H*I*J]$. By Lemma \ref{dense_closed_2}, 
  $V[H*I*J]$ can be viewed as a forcing extension of $V$ by an $\aleph_{\ell + 1}$-directed closed 
  forcing notion. Therefore, by Lemma \ref{indestructibility_lemma}, since $\ell > j$, $S$ reflects in $V[H*I*J]$ 
  at an ordinal in $S^\kappa_{\aleph_j}$. Since $V[H*I]$ and $V[H*I*J]$ have the same ordinals of 
  cofinality $\aleph_j$, this holds in $V[H*I]$ as well.
\end{proof}

Recall that $\vec{D} = \langle D_\alpha \mid \alpha < \kappa \rangle$ is the generic 
$\square(\kappa)$-sequence introduced by $\bb{S}$. An easy genericity argument, which we
omit, yields the following proposition.

\begin{proposition} \label{genericity_prop}
  Suppose $\gamma < \delta < \kappa$, with $\gamma$ a limit ordinal. Then there is 
  $\beta < \kappa$ such that $\gamma \in \acc(D_\beta)$ and $\delta \in \nacc(D_\beta)$.
\end{proposition}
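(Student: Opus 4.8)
The plan is to derive the statement from the genericity of $H$ on the $\bb{S}_1 = \bb{S}(\kappa)$ coordinate. Let $H_1$ be the $\bb{S}(\kappa)$-generic filter induced by $H$, so that $\vec{D} = \bigcup H_1$. Fix $\gamma < \delta < \kappa$ with $\gamma$ a limit ordinal, and let $\mathcal{D}_{\gamma,\delta}$ be the set of $t \in \bb{S}(\kappa)$ such that $\gamma \in \acc(D^t_\beta)$ and $\delta \in \nacc(D^t_\beta)$ for some $\beta \leq \gamma^t$. Since $\vec{D}$ end-extends every condition in $H_1$, any $t \in \mathcal{D}_{\gamma,\delta} \cap H_1$ supplies a $\beta$ witnessing the conclusion; thus it suffices to prove that $\mathcal{D}_{\gamma,\delta}$ is dense in $\bb{S}(\kappa)$.

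To prove density, I would fix an arbitrary $s \in \bb{S}(\kappa)$ and first extend it, using the standard fact (part of the verification that $\bigcup H_1$ is a $\square(\kappa)$-sequence of length $\kappa$) that conditions of height above $\delta$ are dense, so as to assume $\gamma_0 := \gamma^s > \delta$; in particular $D^s_\gamma$ is then a club in $\gamma$. Setting $\beta := \gamma_0 + \omega$, I would define the end-extension $t \leq s$ with $\gamma^t = \beta$ that keeps $D^t_\alpha = D^s_\alpha$ for $\alpha \leq \gamma_0$, treats the successor ordinals in $(\gamma_0, \beta)$ trivially (so that the only new limit ordinal in $(\gamma_0, \beta]$ is $\beta$ itself), and puts $D^t_\beta = D^s_\gamma \cup \{\gamma, \delta\} \cup \{\gamma_0 + n \mid 1 \leq n < \omega\}$.

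I would then check that $t$ is a condition and lies in $\mathcal{D}_{\gamma,\delta}$. The set $D^t_\beta$ is cofinal in $\beta$ through its tail and is closed, since its accumulation points below $\beta$ are exactly $\acc(D^s_\gamma) \cup \{\gamma\}$, all of which it contains. By construction $\gamma \in \acc(D^t_\beta)$, while $\sup(D^t_\beta \cap \delta) = \gamma < \delta$ forces $\delta \in \nacc(D^t_\beta)$, so $t \in \mathcal{D}_{\gamma,\delta}$. For coherence (clause (3)) it is enough to handle the new club $D^t_\beta$: every accumulation point $\lambda$ of $D^t_\beta$ below $\beta$ lies in $\acc(D^s_\gamma) \cup \{\gamma\}$, and for such $\lambda$ one has $D^t_\beta \cap \lambda = D^s_\gamma \cap \lambda$, which equals $D^s_\lambda = D^t_\lambda$ when $\lambda < \gamma$ (by coherence of $s$, since then $\lambda \in \acc(D^s_\gamma)$) and equals $D^s_\gamma = D^t_\gamma$ when $\lambda = \gamma$.

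The only real point of the argument is this last coherence check, and the device that makes it painless is the choice $\beta = \gamma_0 + \omega$: jumping exactly $\omega$ steps past the old top introduces no new limit ordinal below $\beta$, so no auxiliary clubs need be defined, and placing all of the mass of $D^t_\beta$ below $\gamma_0$ inside $D^s_\gamma \cup \{\gamma, \delta\}$ guarantees that the accumulation points of $D^t_\beta$ at or below $\gamma_0$ are precisely those already governed by the coherent sequence $s$. Granting the density of conditions of large height, this shows that $\mathcal{D}_{\gamma,\delta}$ is dense, and genericity yields the desired $\beta$.
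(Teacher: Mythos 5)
Your proof is correct and is precisely the ``easy genericity argument'' that the paper omits: one shows density of the set of conditions exhibiting the required configuration by end-extending a condition of height $\gamma_0>\delta$ to height $\gamma_0+\omega$ with top club $D^s_\gamma\cup\{\gamma,\delta\}\cup\{\gamma_0+n\mid 1\le n<\omega\}$, and your coherence check (the accumulation points of the new club are exactly $\acc(D^s_\gamma)\cup\{\gamma\}$, all governed by $s$) is the only point that needs verification. Nothing to add.
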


\begin{lemma}
  In $V[H*I]$, $\vec{D}$ is a $\boxtimes^-(\mathrm{NS}^+_\kappa)$-sequence.
\end{lemma}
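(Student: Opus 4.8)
The plan is to observe that conditions (1)--(3) in the definition of $\boxtimes^-(\mathrm{NS}^+_\kappa)$ are inherited directly from the fact that $\vec{D}$ is a $\square(\kappa)$-sequence, so that the entire content of the lemma is the guessing clause (4): for every stationary $S \subseteq \kappa$ and every cofinal $A \subseteq \kappa$ in $V[H*I]$, there is $\beta \in S$ with $\sup(\nacc(D_\beta) \cap A) = \beta$. Since here each $\mathcal{C}_\beta = \{D_\beta\}$ is a singleton, it suffices to produce one such $\beta$. Conditions (1)--(3) hold because each $D_\alpha$ remains club in $\alpha$, the coherence requirement $D_\beta \cap \alpha = D_\alpha$ for $\alpha \in \acc(D_\beta)$ is a universally quantified statement about fixed sets and hence absolute to $V[H*I]$, and the bound $\otp(D_\alpha) \leq \kappa$ is trivial. (That $\vec{D}$ has no thread will follow from clause (4) itself, via the argument showing $\boxtimes^- \Rightarrow \square$.)

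For clause (4), fix a stationary $S$ and a cofinal $A$ in $V[H*I]$. As in the proof of Lemma \ref{reflection_lem}, since $S$ is stationary in $V[H*I]$ the construction of $\bb{R}$ guarantees that $S$ is not fragile, so for all sufficiently large $\ell < \omega$ there is $t \in \bb{T}(\ell)$ forcing that $S$ is stationary. Fixing such $\ell$ and $t$, let $J$ be $\bb{T}(\ell)$-generic over $V[H*I]$ with $t \in J$; then $S$ remains stationary in $V[H*I*J]$. The $\bb{T}_1$-factor of $J$ adds a thread $d$ through $\vec{D}$, i.e. a club $d \subseteq \kappa$ with $d \cap \beta = D_\beta$, and hence $\nacc(D_\beta) = \nacc(d) \cap \beta$, for every $\beta \in \acc(d)$.

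The key step is to show that $\nacc(d) \cap A$ is cofinal in $\kappa$ in $V[H*I*J]$. For each $\gamma_0 < \kappa$, I claim the set $E_{\gamma_0}$ of conditions $D_\gamma \in \bb{T}_1$ for which some $\delta \in A$ with $\gamma_0 < \delta$ satisfies $\delta \in \nacc(D_\gamma)$ is dense: given an arbitrary condition $D_\gamma$, choose $\delta \in A$ with $\delta > \max(\gamma, \gamma_0)$ and apply Proposition \ref{genericity_prop} to $\gamma < \delta$ to obtain $\beta$ with $\gamma \in \acc(D_\beta)$ and $\delta \in \nacc(D_\beta)$; then $D_\beta \leq D_\gamma$ and $D_\beta \in E_{\gamma_0}$. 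Since $d$ is $\bb{T}_1$-generic over $V[H*I]$ and each $E_{\gamma_0}$ lies in $V[H*I]$, the thread meets every $E_{\gamma_0}$; because $\delta \in \nacc(D_\beta)$ and $D_\beta$ is compatible with $d$ forces $\delta \in \nacc(d)$, this produces points of $\nacc(d) \cap A$ above every $\gamma_0$, so $\nacc(d) \cap A$ is cofinal in $\kappa$.

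Consequently $\acc(\nacc(d) \cap A)$ and $\acc(d)$ are both club in $\kappa$ in $V[H*I*J]$, so, as $S$ is stationary there, we may choose $\beta \in S \cap \acc(d) \cap \acc(\nacc(d) \cap A)$. For such $\beta$ we have $\nacc(D_\beta) = \nacc(d) \cap \beta$ and $\sup(\nacc(d) \cap A \cap \beta) = \beta$, whence $\sup(\nacc(D_\beta) \cap A) = \beta$. Finally, the statement ``$\beta \in S$ and $\sup(\nacc(D_\beta) \cap A) = \beta$'' concerns only the objects $\beta, S, D_\beta, A$ of $V[H*I]$ and is therefore absolute between $V[H*I]$ and $V[H*I*J]$, so the required witness $\beta$ already exists in $V[H*I]$, verifying clause (4). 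I expect the main obstacle to be the density argument establishing the cofinality of $\nacc(d) \cap A$, and the careful use of the thread $d$ to pass, via absoluteness, from the witness found in $V[H*I*J]$ back to $V[H*I]$.
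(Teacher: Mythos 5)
Your proof is correct and rests on the same three pillars as the paper's argument: non-fragility of stationary sets in $V[H*I]$, passage to the thread added by $\bb{T}(\ell)$, and Proposition \ref{genericity_prop}. The only difference is organizational: the paper argues by contradiction, using Fodor's Lemma to stabilize $\sup(\nacc(D_\alpha)\cap A)$ on a stationary set and then making a single application of the genericity proposition below the chosen condition, whereas you argue directly by showing $\nacc(d)\cap A$ is cofinal via a density argument and intersecting the resulting clubs with $S$ before pulling the witness back by absoluteness; both routes are sound.
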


\begin{proof}
  Suppose not. This means that there is a cofinal $A \subseteq \kappa$ 
  and a stationary $S \subseteq \kappa$ such that, for all $\alpha \in S$, 
  $\sup(\nacc(D_\alpha) \cap A) < \alpha$. By Fodor's Lemma, we can find a fixed 
  $\alpha_0 < \kappa$ and a stationary $S_0 \subseteq S$ such that, for all 
  $\alpha \in S_0$, $\sup(\nacc(D_\alpha) \cap A) = \alpha_0$. $S_0$ is not fragile, 
  so we can find $i < \omega$ and $t \in \bb{T}(i)$ such that $t \Vdash``S_0$ is 
  stationary in $\kappa."$ Let $\gamma < \kappa$ be such that, letting $t = (t_0, t_1)$, 
  we have $t_1 = D_\gamma$. Let $\delta = \min(A \setminus (\gamma + 1))$. By 
  Proposition \ref{genericity_prop}, there is $\beta < \kappa$ such that 
  $\gamma \in \acc(D_\beta)$ and $\delta \in \nacc(D_\beta)$. Let $t_1^* = D_\beta$, 
  and let $t^* = (t_0, t_1^*) \leq t$. Let $J$ be $\bb{T}(i)$-generic with 
  $t^* \in J$. Let $T = \bigcap_{(\bar{t}_0, \bar{t}_1) \in J} \bar{t}_1$. $T$ is a 
  thread through $\vec{D}$ and $\beta \in \acc(T)$. Therefore, for every 
  $\alpha \in \acc(T) \setminus \beta$, $\delta \in \nacc(D_\alpha) \cap A$, so, 
  in particular, $\alpha \not\in S_0$. Thus, $\acc(T) \setminus \beta$ witnesses 
  that $S_0$ is non-stationary in $\kappa$, contradicting the fact that 
  $t \Vdash``S_0$ is stationary in $\kappa."$ and $t \in J$.
\end{proof}

\begin{lemma} \label{higher_width_lemma}
  In $V[H*I]$, $\vec{\mathcal{C}}$ is a $\boxtimes^-_{\mu}(\{S^{\kappa}_{\aleph_n} \mid n < \omega\}, \omega)$-sequence.
\end{lemma}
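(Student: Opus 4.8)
The plan is to take $\mathcal{C}_\alpha = \{C_{\alpha,i} \mid i(\alpha) \leq i < \omega\}$ for every limit $\alpha < \kappa$ (and some fixed trivial singleton for non-limit $\alpha$, which are irrelevant to clause (4)), and to show that $\vec{\mathcal{C}} = \langle \mathcal{C}_\alpha \mid \alpha < \kappa \rangle$ is as desired. Clauses (1)--(3) of $\boxtimes^-_\mu(\{S^\kappa_{\aleph_n} \mid n < \omega\}, \omega)$ are immediate from the defining properties of the $\square^{\mathrm{ind}}_{\mu,\omega}$-sequence $\vec{C}$: each $\mathcal{C}_\alpha$ is a nonempty countable (as $\cf(\mu) = \omega$) set of clubs in $\alpha$; clause (2) holds since $\otp(C_{\alpha,i}) < \mu_i = \aleph_{i+1} < \mu$; and clause (3) is precisely the coherence property (5) of $\square^{\mathrm{ind}}$, which also supplies $i(\alpha) \leq i$ whenever $C_{\alpha,i} \cap \alpha$ is required to lie in $\mathcal{C}_\alpha$. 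So everything reduces to clause (4).

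Fix $n < \omega$ and a cofinal $A \subseteq \kappa$. The first observation is that, since $\otp(C_{\beta,i}) \geq \cf(\beta)$ and $\otp(C_{\beta,i}) < \aleph_{i+1}$, every $\beta \in S^\kappa_{\aleph_n}$ satisfies $i(\beta) \geq n$; thus $\mathcal{C}_\beta$ involves only columns $i \geq n$, and it suffices to produce one $\beta \in S^\kappa_{\aleph_n}$ with $i(\beta) = n$ for which $\sup(\nacc(C_{\beta,i}) \cap A) = \beta$ for all $i \geq n$. Rather than argue by contradiction as in the preceding lemma, I would construct such a $\beta$ directly inside an extension that threads the $n$-th column. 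Force with $\bb{T}(n)$ over $V[H*I]$; by Lemma \ref{dense_closed_2} this is densely $\aleph_{n+1}$-directed closed, hence adds no new $\aleph_n$-sequences and in particular preserves $\cf(\beta) = \aleph_n$. Let $J$ be generic and work in $V[H*I*J]$. The $\bb{T}_{0,n}$-factor adds a thread $T_n$, and, via the projections $\pi_{ni}$, a coherent nested system $T_i = \bigcup\{C_{\alpha,i} \mid C_{\alpha,n} \in J\}$ for $i \geq n$; a standard computation from coherence (5) shows $\acc(T_n) \subseteq \{\beta \mid i(\beta) \leq n\}$ and that $T_i \cap \beta = C_{\beta,i}$ for all $\beta \in \acc(T_n)$ and all $i \geq n$.

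The key point is to build the threads so that each $T_i$ meets $A$ in its non-accumulation points cofinally. This is forced by genericity once I establish the $\square^{\mathrm{ind}}$-analogue of Proposition \ref{genericity_prop}: for every $i \geq n$, every limit $\gamma < \kappa$ with $i(\gamma) \leq n$, and every $\delta < \kappa$, there is $\beta$ with $i(\beta) \leq n$, $\gamma \in \acc(C_{\beta,n})$, and some element of $A \cap (\delta, \beta)$ lying in $\nacc(C_{\beta,i})$. Granting this, for each $i \geq n$ the set of conditions forcing $\nacc(T_i) \cap A$ to be unbounded is dense, so each $\nacc(T_i) \cap A$ is cofinal in $\kappa$ in $V[H*I*J]$. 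Hence every $E_i := \{\beta < \kappa \mid \sup(\nacc(T_i) \cap A \cap \beta) = \beta\}$ is club, and, as there are only countably many $i$, so is $E := \acc(T_n) \cap \bigcap_{i \geq n} E_i$.

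Finally, choose any $\beta \in E \cap S^\kappa_{\aleph_n}$, which exists since $E$ is club and $S^\kappa_{\aleph_n}$ is stationary in $V[H*I*J]$. From $\beta \in \acc(T_n)$ we get $i(\beta) \leq n$, and from $\cf(\beta) = \aleph_n$ we get $i(\beta) \geq n$, so $i(\beta) = n$; and for every $i \geq n$, $C_{\beta,i} = T_i \cap \beta$, whence $\nacc(C_{\beta,i}) \cap A = \nacc(T_i) \cap A \cap \beta$ is cofinal in $\beta$ because $\beta \in E_i$. Thus $\beta$ witnesses clause (4) for $A$ and $S^\kappa_{\aleph_n}$ in $V[H*I*J]$. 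Since these properties of $\beta$ refer only to $A$ and $\vec{C}$, both of which lie in $V[H*I]$, goodness of $\beta$ is absolute, while $\cf(\beta) = \aleph_n$ is preserved by the $\aleph_{n+1}$-directed closure of $\bb{T}(n)$; hence $\beta$ already witnesses clause (4) in $V[H*I]$. The main technical obstacle is exactly the genericity lemma of the third paragraph: one must extend a condition of $\bb{S}_0$ so as to plant a prescribed point of $A$ as a non-accumulation point of the $i$-th club at a fresh top point, simultaneously respecting the order-type bounds $\otp(C_{\beta,i}) < \aleph_{i+1}$, the nesting $C_{\beta,i} \subseteq C_{\beta,j}$, and the indexed coherence (5)--(6); this is the $\square^{\mathrm{ind}}$ refinement of the easy genericity behind Proposition \ref{genericity_prop}.
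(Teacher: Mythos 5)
Your architecture is genuinely different from the paper's: you thread the $n$-th column via $\bb{T}(n)$ and extract a witness of cofinality $\aleph_n$ from a club intersection, whereas the paper only ever plants witnesses of cofinality $\omega$ (by an $\omega$-length fusion argument over $V$ producing stationarily many $\beta \in S^\kappa_\omega$ with $i(\beta)=0$ that guess $A$ in every column) and then transfers to $S^\kappa_{\aleph_n}$ for $n>0$ by reflecting the stationary set of cofinality-$\omega$ witnesses at a point $\gamma \in S^\kappa_{\aleph_n}$ and reading off the guessing from coherence. Your surrounding bookkeeping (the computation $i(\beta)\geq n$ on $S^\kappa_{\aleph_n}$, the coherence of the projected threads $T_i$, the absoluteness of the witness between $V[H*I*J]$ and $V[H*I]$) is all correct. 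The problem is that everything has been pushed into the deferred ``genericity lemma,'' and that lemma is precisely where the entire content of the paper's proof lives; it is not a routine refinement of Proposition \ref{genericity_prop}.

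Concretely, there are two obstructions to proving your lemma the way you describe. First, $A$ lives only in $V[H*I]$, not in $V$, so one cannot ``plant a prescribed point of $A$'' by a density argument in $\bb{S}_0$ alone: one must first extend a condition of the full iteration $\bb{S} * \dot{\bb{R}}$ to decide some $\alpha \in \dot{A}$, and only then extend the $\bb{S}_0$-coordinate. Second, and more seriously, the decided point $\alpha$ will in general lie strictly below the top $\gamma'$ of the deciding condition, and then it typically cannot be planted at all: condition (6) of $\square^{\mathrm{ind}}$ forces $\gamma' \in \acc(C_{\beta,j})$ for all sufficiently large $j$, whence $C_{\beta,j}\cap\gamma' = C_{\gamma',j}$ is already frozen, while the nesting $C_{\beta,i}\subseteq C_{\beta,j}$ would require $\alpha \in C_{\gamma',j}$ for all large $j$ --- which the condition need not satisfy. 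So a single-step extension cannot in general realize ``$\alpha \in \nacc(C_{\beta,i})$'' for a point $\alpha$ already decided into $\dot A$. This is exactly why the paper runs an $\omega$-length fusion inside the dense closed subset $\bb{U}_{\kappa^+,0}$ of $\bb{S}*\dot{\bb{R}}*\dot{\bb{T}}(0)$, interleaving the decisions ``$\alpha_n \in \dot{A}$'' with the construction of new top levels so that each $\alpha_n$ is planted relative to the \emph{previous} stage's top and only into an initial segment of the columns; the points of $A$ accumulate at the final top $\beta$, which is where the guessing actually happens. Your proposal would need to incorporate this fusion into the proof of the genericity lemma (producing the witness $\beta$ as a limit of an $\omega$-chain of conditions rather than a one-step extension), at which point you have essentially reproduced the paper's core construction; the threading-plus-club-intersection superstructure is then a viable, and arguably cleaner, replacement for the paper's use of Lemma \ref{reflection_lem}, but as written the proof is missing its main ingredient.
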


\begin{proof}
  We first deal with $S^\kappa_{\aleph_0}$. Work in $V$. Let $\dot{A}$ be an $\bb{S} * \dot{\bb{R}}$-name for an unbounded 
  subset of $\kappa$, and let $(s, \dot{r}) \in \bb{S} \times \dot{\bb{R}}$, 
  with $s = (s(0), s(1))$. We will find $(s^*, \dot{r}^*) \leq (s, \dot{r})$ 
  and $\beta \in S^\kappa_\omega$ such that $(s^*, \dot{r}^*) \Vdash_{\bb{S} * \dot{\bb{R}}}``$for all $\dot{i}(\beta) \leq i < \omega$, 
  $\sup(\nacc(\dot{C}_{\beta, i}) \cap \dot{A}) = \beta."$

  Let $\bb{U} = \bb{U}_{\kappa^+, 0}$, the countably closed dense subset 
  of $\bb{S} * \dot{\bb{R}} * \dot{\bb{T}}(0)$ isolated in the proof of Lemma 
  \ref{dense_closed_2}. We will recursively define a decreasing sequence $\langle 
  (s_n, \dot{r}_n, \dot{t}_n) \mid n < \omega \rangle$ from $\bb{U}$ and 
  an increasing sequence of ordinals $\langle \alpha_n \mid n < \omega \rangle$.

  To start, fix $(s_0, \dot{r}_0, \dot{t}_0) \in \bb{U}$ such that 
  $(s_0, \dot{r}_0) \leq (s, \dot{r})$. Next, suppose $n < \omega$ and 
  $(s_n, \dot{r}_n, \dot{t}_n)$ has been defined and $r_n$ is the function 
  in $V$ such that $s_n \Vdash_{\bb{S}}``\dot{r}_n = \check{r}_n."$ Assume we have arranged 
  that, for all $m < n$, $\alpha_m$ has also been defined and 
  $\alpha_m < \gamma^{s_n(0)}$. Find $(s^*_n, \dot{r}^*_n) \leq (s_n, \dot{r}_n)$ 
  and $\alpha_n > \gamma^{s_n(0)}$ such that $(s^*_n, \dot{r}^*_n) \Vdash_{\bb{S} * \dot{\bb{R}}} 
  ``\alpha_n \in \dot{A}."$ Let $\gamma^* = \gamma^{s^*_n(0)}$, and let $\gamma = \gamma^*+\omega$.
  We now define $\hat{s}_n(0) \leq s^*_n(0)$ with $\gamma^{\hat{s}_n(0)} = \gamma$. To do 
  this, we only need to specify $i(\gamma)^{\hat{s}_n(0)}$ and $C^{\hat{s}_n(0)}_{\gamma, i}$ 
  for all $i(\gamma)^{\hat{s}_n(0)} \leq i < \omega$. We let $i(\gamma)^{\hat{s}_n(0)} = 0$. 
  Let $m < \omega$ be least such that $m \geq n$ and $\gamma^{s_n(0)} \in \acc(C^{s^*_n(0)}_{\gamma^*, m})$. 
  If $i \leq m$, then let $C^{\hat{s}_n(0)}_{\gamma, i} = C^{s_n(0)}_{\gamma^{s_n(0)}, i} 
  \cup \{\gamma^{s_n(0)}, \alpha_n\} \cup \{\gamma^* + \ell \mid \ell < \omega \}$. 
  If $m < i < \omega$, let $C^{\hat{s}_n(0)}_{\gamma, i} = C^{s^*_n(0)}_{\gamma^*, i} \cup 
  \{\gamma^* + \ell \mid \ell < \omega \}$. The point is that, for all $i \leq n$, 
  $\alpha_n \in \nacc(C^{\hat{s}_n(0)}_{\gamma, i})$.
  
  Let $\hat{s}_n(1) = s^*_n(1)$, and let $\dot{\hat{t}}_n$ be such that $\hat{s}_n \Vdash_{\bb{S}}`` 
  \dot{\hat{t}}_n(0) = C^{\hat{s}_n(0)}_{\gamma, 0}$ and $\dot{\hat{t}}_n(1) = \dot{t}_n(1)."$ 
  Then $(\hat{s}_n, \dot{r}_n, \dot{\hat{t}}_n) \leq (s_n, \dot{r}_n, \dot{t}_n)$. 
  Find $(s_{n+1}, \dot{r}_{n+1}, \dot{t}_{n+1}) \leq (\hat{s}_n, \dot{r}_n, \dot{\hat{t}}_n)$ 
  such that $(s_{n+1}, \dot{r}_{n+1}, \dot{t}_{n+1}) \in \bb{U}$ and continue the construction.

  At the end of the construction, let $(s^*, \dot{r}^*)$ be a lower bound for 
  $\langle s_n, \dot{r}_n \mid n < \omega \rangle$. In particular, we can assume that, 
  letting $\beta = \sup(\{\gamma^{s_n(0)} \mid n < \omega\})$, we have  
  $i(\beta)^{s^*(0)} = 0$ and, for all $i < \omega$, $C^{s^*(0)}_{\beta, i} = 
  \bigcup_{n < \omega} C^{s_n(0)}_{\gamma^{s_n(0)}, i}$. Also, 
  $(s^*, \dot{r}^*) \Vdash_{\bb{S} * \dot{\bb{R}}}``\{\alpha_n \mid n < \omega\} \subseteq \dot{A}"$ 
  and, for all $i < \omega$, $\{\alpha_n \mid i \leq n < \omega\} \subseteq \nacc(C^{s^*(0)}_{\beta, i})$. 
  Therefore, $(s^*, \dot{r}^*) \Vdash_{\bb{S} * \dot{\bb{R}}}``$ for all $i < \omega$, 
  $\sup(\nacc(\dot{C}_{\beta, i}) \cap \dot{A}) = \beta,"$ as desired.

  Now suppose $0 < n < \omega$. In $V[H*I]$, let $A$ be an unbounded subset of $\kappa$. 
  Let $S_A = \{\beta \in S^\kappa_\omega \mid$ for all $i(\beta) \leq i < \omega, \sup(\nacc(C_{\beta,i}) \cap A) = \beta\}$. 
  By the case $n=0$, $S_A$ is stationary, so, by Lemma \ref{reflection_lem}, there is $\gamma \in S^\kappa_{\aleph_n}$ such that 
  $S_A$ reflects at $\gamma$. For every $i(\gamma) \leq i < \omega$, there are 
  unboundedly many $\beta \in \acc(C_{\gamma, i}) \cap S_A$, so $\sup(\nacc(C_{\gamma, i} \cap A) = \gamma$. 
  Therefore, $\vec{\mathcal{C}}$ is a $\boxtimes^-_{\aleph_\omega}(\{S^{\aleph_{\omega+1}}_{\aleph_n} \mid n < \omega\}, \omega)$-sequence.
\end{proof}

This completes the proof of (1) of Theorem \ref{con_thm}. We next quickly sketch 
the proofs of (2) and (3). Suppose that, in $V$, $\mathrm{GCH}$ holds, $\kappa$ is a regular, uncountable cardinal, 
$T$ is a stationary subset of $\kappa$, and $\mathrm{Refl}(T)$ holds in any forcing 
extension by a $\kappa$-directed closed forcing poset of size $\leq \kappa$. By arguments 
from \cite{hayut_lh}, this can be forced with $\kappa = \aleph_2$ and $T = S^{\aleph_2}_{\aleph_0}$ 
from a weakly compact cardinal and with $\kappa$ being the least inacessible cardinal and 
$T = \kappa$ from an inaccessible limit of supercompact cardinals.

Let $\bb{S} = \bb{S}(\kappa)$. In $V^{\bb{S}}$, let $\vec{D}$ be the generically-added 
$\square(\kappa)$-sequence, and let $\bb{T} = \bb{T}_1(\vec{D})$. 
In $V^{\bb{S}}$, define a poset $\bb{R}$ exactly as in the proof of (1) of Theorem \ref{con_thm}. 
In particular, in $V^{\bb{S} * \dot{\bb{R}}}$, if $S \subseteq \kappa$ and 
$\Vdash_{\bb{T}}``S$ is non-stationary$,"$ then $S$ is already non-stationary in 
$V^{\bb{S} * \dot{\bb{R}}}$. We will also have the following Lemma, proven in the same 
way as Lemma \ref{dense_closed_2}.

\begin{lemma}
  In $V$, $\bb{S} * \dot{\bb{R}} * \dot{\bb{T}}$ has a dense $\kappa$-directed closed subset.
\end{lemma}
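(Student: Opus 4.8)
The plan is to establish that $\bb{S} * \dot{\bb{R}} * \dot{\bb{T}}$ has a dense $\kappa$-directed closed subset, following the template of Lemma \ref{dense_closed_2} but in the simplified setting of (2) and (3), where there is no indexed square sequence and hence no family of projections $\pi_{ij}$ to manage. Concretely, I would first isolate a dense $\kappa$-directed closed subset $\bb{U}_0 \subseteq \bb{S} * \dot{\bb{T}}$ exactly as in the proof of Lemma \ref{dense_closed}: conditions where $\gamma^s = \gamma^{s}$ determines the $\bb{T}_1$-coordinate $\dot{t}$ to be the generic $\square(\kappa)$-club $D^s_{\gamma^s}$. Because $\bb{S} = \bb{S}(\kappa)$ is $\kappa$-strategically closed and the tail of the square sequence is read off from the condition, this $\bb{U}_0$ is $\kappa$-directed closed.

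Next I would define, for each $\eta \leq \kappa^+$, the set $\bb{U}_\eta$ of conditions $(s, \dot{r}, \dot{t}) \in \bb{S} * \dot{\bb{R}}_\eta * \dot{\bb{T}}$ such that $(s, \dot{t}) \in \bb{U}_0$, $s$ decides $\dot{r}$ to be some $r \in V$, and for every $\xi \in \dom(r)$ we have $(s, \dot{r} \restriction \xi, \dot{t}) \Vdash ``\max(r(\xi)) \in \dot{E}_\xi"$, where $\dot{E}_\xi$ is a name for a club disjoint from $\dot{S}_\xi$ added by $\bb{T}$. The $\kappa$-directed closure of $\bb{U}_\eta$ is then routine: given a directed family of size $<\kappa$, one takes coordinatewise unions of the $r(\xi)$ (using that $|\dom(r)| \leq \mu < \kappa$ and the closure of $\bb{S}$ and $\bb{T}$), and the $\max$-in-$\dot{E}_\xi$ requirement is preserved because each $\dot{E}_\xi$ is forced to be club.

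The substance, as in Lemma \ref{dense_closed_2}, is density, which I would prove by induction on $\eta$. The successor case $\eta = \xi + 1$ and the high-cofinality case $\cf(\eta) \geq \kappa$ are handled verbatim as in that lemma. The main simplification is in the case $\aleph_0 \leq \cf(\eta) < \kappa$: in the proof of Lemma \ref{dense_closed_2} one had to pass from width $i$ to a larger width $j$ via the projection $\pi_{ij}$ in order to obtain enough closure to take a lower bound of the $\cf(\eta)$-length construction, and then descend back to $i$. Here there is no width parameter, so I can build a decreasing sequence $\langle (s^k, \dot{r}^k, \dot{t}^k) \mid k < \cf(\eta) \rangle$ directly along a cofinal sequence in $\eta$, each step applying the inductive hypothesis at $\xi_k < \eta$, and then take a lower bound using the $\kappa$-directed (in particular, $\mathrm{cf}(\eta)^+$-) closure of the already-constructed part $\bb{U}$, with no need to change coordinates.

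The step I expect to be the main obstacle is verifying that the lower bound obtained at a limit $\eta$ genuinely lies in $\bb{U}_\eta$ rather than merely in $\bb{S} * \dot{\bb{R}}_\eta * \dot{\bb{T}}$: one must check that the union condition $\dot{r} = \bigcup_k \dot{r}^k$ is forced to be a legitimate $\bb{R}_\eta$-condition, i.e. that $s \Vdash ``r(\xi) \cap \dot{S}_\xi = \emptyset"$ for each $\xi$, and that the $\max(r(\xi)) \in \dot{E}_\xi$ requirement is inherited at the limit. This requires that each $\dot{S}_\xi$ be \emph{fragile}, so that $\Vdash_{\bb{T}} ``\dot{S}_\xi$ is non-stationary$"$ and hence the names $\dot{E}_\xi$ are available and coherent; this is exactly the feature guaranteed by the construction of $\bb{R}$, and invoking it is what makes the whole argument go through in the same manner as Lemma \ref{dense_closed_2}.
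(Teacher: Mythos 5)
Your proposal is correct and follows essentially the same route as the paper, which proves this lemma simply by repeating the argument of Lemma \ref{dense_closed_2} with the same dense set $\bb{U}$ (conditions where $s$ decides $\dot{r}$ and $\dot{t}$, $\dot{t}$ is the top club of $s$, and $\max(r(\xi))$ is forced into $\dot{E}_\xi$). You correctly identify the one genuine simplification — the limit-cofinality case of the density induction no longer needs the detour through the projections $\pi_{ij}$ — and the key point that fragility of the $\dot{S}_\xi$ supplies the club names $\dot{E}_\xi$ needed to close off at limits.
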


Let $G$ be $\bb{S}$-generic over $V$, and let $H$ be $\bb{R}$-generic over $V[H]$. The proof 
that, in $V[G*H]$, $\mathrm{Refl}(T)$ holds is similar to the proof in (1) and can be found in 
\cite{hayut_lh}. The proof that $\vec{D}$ is a $\boxtimes^-(\mathrm{NS}^+_\kappa)$-sequence in $V[G*H]$ 
is exactly as in the proof of (1). Thus, by either 
letting $\kappa = \aleph_2$ and $T = S^{\aleph_2}_{\aleph_0}$ or letting $\kappa$ be the 
least inaccessible cardinal and $T = \kappa$, the following lemma will complete the proofs of (2) and (3) 
of Theorem \ref{con_thm}.

\begin{lemma} \label{diamond_lem}
  $\diamondsuit(\kappa)$ holds in $V[G*H]$.
\end{lemma}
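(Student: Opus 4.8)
The plan is to obtain $\diamondsuit(\kappa)$ in two stages, using the factorization $\bb{S} * \dot{\bb{R}}$ and the fact that $\mathrm{GCH}$ holds in $V$. First I would record the cardinal arithmetic of $V[G*H]$. Since $\bb{S} = \bb{S}(\kappa)$ is $\kappa$-strategically closed and $\bb{R}$ is $\kappa$-distributive, the whole iteration adds no new bounded subsets of $\kappa$, so $2^{<\kappa} = \kappa$ is preserved; since in addition $\bb{S} * \dot{\bb{R}}$ is $\kappa^+$-c.c.\ of size $\kappa^+$, a nice-name count gives $2^\kappa = \kappa^+$ in $V[G*H]$. Thus $\mathrm{GCH}$ at and below $\kappa$ survives into $V[G*H]$.

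Next I would show that $\diamondsuit(\kappa)$ already holds in $V[G]$. Under $\mathrm{GCH}$ one has $|\bb{S}| = \kappa$, and $\bb{S}$ is $\kappa$-strategically closed and atomless. A standard argument—the same one showing that $\mathrm{Add}(\kappa,1)$ forces $\diamondsuit_\kappa$ whenever $2^{<\kappa} = \kappa$, with the strategy for player II used to generate lower bounds through the limit stages of cofinality $<\kappa$ that arise in the density argument—shows that any such forcing forces $\diamondsuit(\kappa)$. The point of proceeding this way is that it is uniform in $\kappa$: it uses nothing about whether $\kappa$ is a successor or inaccessible, so it simultaneously covers the case $\kappa = \aleph_2$ and the case where $\kappa$ is the least inaccessible.

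The main obstacle is the third step: showing that $\bb{R}$ preserves $\diamondsuit(\kappa)$. This is genuinely the technical heart, because although $\bb{R}$ is $\kappa$-distributive it is not strategically closed, and it does add new subsets of $\kappa$ (it shoots clubs through complements of fragile sets), so one cannot simply transport the $V[G]$-sequence across unchanged. The two ingredients I would exploit are that $\kappa$-distributivity forces every $X \subseteq \kappa$ in $V[G*H]$ to be \emph{fresh}, i.e.\ to have all of its proper initial segments in $V[G]$, and that the $\kappa^+$-c.c.\ lets one, for any $\bb{R}$-name $\dot{X}$ for a subset of $\kappa$ and any name for a club, find in $V[G]$ a club $D \subseteq \kappa$ forced into that club and preserve stationarity of subsets of $\kappa$. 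Working below a fixed condition and along a ground-model continuous chain of elementary submodels of a large $H_\theta^{V[G]}$ containing $\dot{X}$ and a fixed $\diamondsuit(\kappa)$-sequence, distributivity lets one read off the value of $\dot{X} \cap \alpha$ at the relevant points $\alpha \in D$, and the ground-model diamond then catches these values on a set that the $\kappa^+$-c.c.\ keeps stationary in $V[G*H]$; this yields a witnessing sequence there. The step I expect to require the most care is precisely verifying that the set of good guessing points survives as a stationary set after forcing with $\bb{R}$.

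Finally, as a check and a shortcut in the successor case, I would note that once $2^{\aleph_1} = \aleph_2$ has been established in $V[G*H]$ (first paragraph), the case $\kappa = \aleph_2$ follows immediately from Shelah's theorem that $2^{\aleph_1} = \aleph_2$ implies $\diamondsuit_{\aleph_2}$, so the delicate preservation argument is only strictly needed for the inaccessible case, where no such GCH-based shortcut is available.
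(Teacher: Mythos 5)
Your cardinal-arithmetic remarks, the observation that $\bb{S}$ alone forces $\diamondsuit(\kappa)$ over $V$, and the Shelah shortcut for $\kappa=\aleph_2$ are all fine. The gap is in the third step, preserving $\diamondsuit(\kappa)$ through $\bb{R}$, and it is not a matter of "care": two of the tools you invoke are simply unavailable here. The $\kappa^+$-c.c.\ controls clubs and stationary subsets of $\kappa^+$, not of $\kappa$; a $\kappa^+$-c.c.\ poset need not put a ground-model club inside a new club in $\kappa$ and need not preserve stationary subsets of $\kappa$. Indeed $\bb{R}$ is an iteration of club-shootings $\mathrm{CU}(\kappa\setminus\dot S_\eta)$ whose whole purpose is to destroy the stationarity of the fragile sets, so the clubs it adds are disjoint from sets stationary in $V[G]$ and contain no $V[G]$-club. (In any case the guessing set for a new $X$ is itself a new set, so chain-condition preservation of ground-model stationary sets would not apply to it.) Worse, the fusion/elementary-submodel argument you sketch requires building decreasing sequences in $\bb{R}$ over $V[G]$ that decide ever-longer initial segments of $\dot X$ and then taking lower bounds; this needs some closure or strategic closure of $\bb{R}$ over $V[G]$, which $\bb{R}$ does not have. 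Its $\kappa$-distributivity is only witnessed indirectly, via the dense $\kappa$-directed closed subset $\bb{U}$ of the three-step iteration $\bb{S}*\dot{\bb{R}}*\dot{\bb{T}}$ back in $V$ (Lemma \ref{dense_closed_2}); and once you descend to $\bb{U}$, every lower bound commits the $\bb{S}$-coordinate as well, so a diamond sequence fixed in advance in $V[G]$ leaves you no freedom to make the guess at $\beta$ come out right.

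That last point is precisely what the paper's proof is built around: it does not fix a $V[G]$-diamond sequence and preserve it, but instead \emph{defines} the sequence from the $\bb{S}$-generic square sequence ($\alpha\in A_\beta$ iff $\beta+\alpha+1\in D_{\beta\cdot 2}$) and runs a single density argument in $V$ inside $\bb{U}$: an $\omega$-length fusion decides $\dot A\cap\beta$ to be some $B\subseteq\beta$ and forces $\beta\in\dot C$, and then the $\bb{S}$-coordinate is extended up to $\beta\cdot 2$ so as to write $B$ into $D_{\beta\cdot 2}$. The residual freedom to extend the $\bb{S}$-part \emph{after} the fusion is the idea missing from your proposal, and it is needed exactly in the inaccessible case that your Shelah shortcut does not cover.
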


\begin{proof}
  In $V[G*H]$, for every $\beta < \kappa$, we define $A_\beta \subseteq \beta$ as follows. 
  For all $\alpha < \beta < \kappa$, let $\alpha \in A_\beta$ iff $\beta + \alpha + 1 \in D_{\beta \cdot 2}$. 
  We claim that, in $V[G*H]$, $\vec{A} = \langle A_\beta \mid \beta < \kappa \rangle$ is a 
  $\diamondsuit(\kappa)$-sequence.

  Work in $V$ and, for all $\beta < \kappa$, let $\dot{A}_\beta$ be a canonical name 
  for $A_\beta$. Let $(s, \dot{r}) \in \bb{S} * \dot{\bb{R}}$, let $\dot{A}$ be an 
  $\bb{S} * \dot{\bb{R}}$-name for an unbounded subset of $\kappa$, and let $\dot{C}$ 
  be an $\bb{S} * \dot{\bb{R}}$-name for a club in $\kappa$. We will find $(s^*, \dot{r}^*) 
  \leq (s, \dot{r})$ and $\beta < \kappa$ such that $(s^*, \dot{r}^*) \Vdash_{\bb{S} * \dot{\bb{R}}} 
  ``\beta \in \dot{C}$ and $\dot{A} \cap \beta = \dot{A}_\beta."$

  Let $\bb{U}$ be the $\kappa$-directed closed dense subset of $\bb{S} * \dot{\bb{R}} * \dot{\bb{T}}$. 
  Define a decreasing sequence $\langle (s_n, \dot{r}_n, \dot{t}_n) \mid n < \omega \rangle$ 
  of conditions from $\bb{U}$ and an increasing sequence of ordinals $\langle \alpha_n \mid n < \omega \rangle$ 
  below $\kappa$ satisfying the following requirements.
  \begin{enumerate}
    \item $(s_0, \dot{r}_0) \leq (s, \dot{r})$.
    \item For all $n < \omega$, $(s_{n+1}, \dot{r}_{n+1})$ decides the value of $\dot{A} \cap \gamma^{s_n}$.
    \item For all $n < \omega$, $\gamma^{s_n} < \alpha_n < \gamma^{s_{n+1}}$ and 
      $(s_{n+1}, \dot{r}_{n+1}) \Vdash ``\alpha_n \in \dot{C}."$
  \end{enumerate}
  The construction is straightforward. Let $\beta = \sup(\{\alpha_n \mid n < \omega\}) = 
  \sup(\{\gamma^{s_n} \mid n < \omega\})$, and use the closure of $\bb{U}$ to find a lower 
  bound $(s_\omega, \dot{r}_\omega)$ for $\langle (s_n, \dot{r}_n) \mid n < \omega \rangle$ 
  with $\gamma^{s_\omega} = \beta$. Then $(s_\omega, \dot{r}_\omega)$ decides the value of 
  $\dot{A} \cap \beta$ to be some $B \subseteq \beta$ and forces $\beta$ to be in $\dot{C}$. 
  It is now easy to build a condition $s^* \leq s_\omega$ such that $\gamma^{s^*} = \beta \cdot 2$ 
  and $D^{s^*}_{\beta \cdot 2}$ is the ordinal closure of $\{\beta + \alpha + 1 \mid \alpha \in B\}$. 
  Let $\dot{r}^* = \dot{r}_\omega$. Then $(s^*, \dot{r}^*) \leq (s, r)$ and $(s^*, \dot{r}^*) \Vdash_{\bb{S} * \dot{\bb{R}}} ``\beta 
  \in \dot{C}$ and $\dot{A} \cap \beta = \dot{A}_\beta."$ 
\end{proof}

We can also get the consistency of certain instances of $\boxtimes^-(\kappa, \lambda)$ with 
some amount of simultaneous stationary reflection. In \cite{hayut_lh}, Hayut and the author 
prove the consistency, from large cardinals, of an indexed version of $\square(\kappa, \lambda)$ 
together with simultaneous stationary reflection. In particular, in all of these models 
in which $\kappa$ is either a successor of a singular cardinal or inaccessible, an 
examination of the proofs in \cite{hayut_lh}, together with
the arguments of Lemma \ref{reflection_lem}, implies that, for all cardinals $\mu \leq \nu < \kappa$ 
with $\nu$ regular, if $S \subseteq S^\kappa_{<\mu}$ is stationary, then $S$ reflects at an ordinal 
in $S^\kappa_\nu$. A straightforward modification of the proofs 
of Lemma \ref{higher_width_lemma} (where the length of the recursive construction in the proof is 
equal to $\lambda$, the width of the square sequence) and Lemma \ref{diamond_lem} then shows that, in the models from 
\cite{hayut_lh}, the indexed $\square(\kappa, \lambda)$-sequence is in fact a 
$\boxtimes^-(\{S^\kappa_{\nu} \mid \nu$ regular, $\lambda \leq \nu < \kappa\}, \lambda)$-sequence. In all cases, $\mathrm{GCH}$ 
can easily be arranged in the final model, so we can thus obtain, for example, the following results (the reader 
is referred to \cite{hayut_lh} for details).
\begin{corollary}
  Suppose there is a weakly compact cardinal. For $i < 2$, there is a forcing extension 
  in which $\mathrm{GCH}$, $\boxtimes^-(\{S^{\aleph_2}_{\aleph_k} \mid i \leq k < 2\}, \aleph_i)$, and $\mathrm{Refl}(<\aleph_i, S^{\aleph_2}_{\aleph_0})$ 
  hold.
\end{corollary}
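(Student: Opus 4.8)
The plan is to mirror the construction of this section at $\kappa = \aleph_2$, replacing the width-$1$ square forcing $\bb{S}(\kappa)$ by a forcing that adds an indexed $\square(\aleph_2, \aleph_i)$-sequence, and to carry out the analysis over the model of \cite{hayut_lh} whose purpose is precisely to make simultaneous reflection indestructible. Fix $i < 2$ and set $\lambda = \aleph_i$. Starting from a weakly compact cardinal, \cite{hayut_lh} produces a model $V$ in which $\mathrm{GCH}$ holds, $\kappa = \aleph_2$, and $\mathrm{Refl}(<\aleph_i, S^{\aleph_2}_{\aleph_0})$ is preserved by every $\kappa$-directed closed forcing of size at most $\kappa$; moreover, running the argument of Lemma \ref{reflection_lem} inside that preparation yields the cofinality refinement that, in any such extension, every stationary $S \subseteq S^{\aleph_2}_{\aleph_0}$ reflects at an ordinal of $S^{\aleph_2}_{\aleph_1}$. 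Over $V$ I would force with $\bb{S}$ to add the indexed $\square(\aleph_2, \aleph_i)$-sequence $\vec{\mathcal{C}} = \langle \mathcal{C}_\alpha \mid \alpha < \aleph_2 \rangle$, set up the thread posets $\bb{T}(j)$ together with the projections $\pi_{jj'}$, and then run the club-shooting iteration $\bb{R}$ that kills fragile sets, defined verbatim as in the body of this section; call the final model $V[G*H]$.

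The first thing to check is that the outer scaffolding is unchanged. The analogue of Lemma \ref{dense_closed_2} holds, namely $\bb{S} * \dot{\bb{R}}_\eta * \dot{\bb{T}}(j)$ has a dense $\kappa$-directed closed subset $\bb{U}$ (all relevant cofinalities and order-types lie below $\kappa = \aleph_2$), so $\bb{R}$ is $\kappa$-distributive with the $\kappa^+$-chain condition, $\mathrm{GCH}$ passes to $V[G*H]$, and threading any of the squares lands inside a $\kappa$-directed closed extension of $V$. Consequently the indestructible simultaneous reflection of $V$ survives, yielding $\mathrm{Refl}(<\aleph_i, S^{\aleph_2}_{\aleph_0})$ in $V[G*H]$; this is the simultaneous version of Lemma \ref{reflection_lem} carried out in \cite{hayut_lh}, which I would simply cite. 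Clauses (1)--(3) of $\boxtimes^-(\{S^{\aleph_2}_{\aleph_k} \mid i \leq k < 2\}, \aleph_i)$ are then immediate: the width bound $|\mathcal{C}_\alpha| \leq \aleph_i$ and the coherence clause are inherited directly from the indexed $\square(\aleph_2, \aleph_i)$-sequence, and the order-type clause is vacuous since $\theta = \kappa$. Since $\mathrm{GCH}$ holds, $\diamondsuit(\aleph_2)$ is automatic, so no analogue of Lemma \ref{diamond_lem} is needed.

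The real content is clause (4), which I would verify by following the proof of Lemma \ref{higher_width_lemma} with the recursion lengthened to $\lambda$. Fix a cofinal $A \subseteq \aleph_2$. The base case builds, by a genericity argument over $\bb{S} * \dot{\bb{R}}$ inside the dense $\kappa$-directed closed subset $\bb{U}$, a point $\beta \in S^{\aleph_2}_{\lambda}$ together with a condition forcing $\sup(\nacc(C) \cap A) = \beta$ for all $C \in \mathcal{C}_\beta$, thereby showing that $S_A = \{\beta \in S^{\aleph_2}_\lambda \mid \sup(\nacc(C) \cap A) = \beta \text{ for all } C \in \mathcal{C}_\beta\}$ is stationary. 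When $i = 1$ the target collection is just $\{S^{\aleph_2}_{\aleph_1}\}$, so this base case already yields clause (4). When $i = 0$ the remaining member $S^{\aleph_2}_{\aleph_1}$ is handled by reflection exactly as in Lemma \ref{higher_width_lemma}: reflect $S_A$ at some $\gamma \in S^{\aleph_2}_{\aleph_1}$, and, for each $C \in \mathcal{C}_\gamma$ and each $\beta \in S_A \cap \acc(C)$, use coherence ($C \cap \beta \in \mathcal{C}_\beta$, whence $\nacc(C) \cap \beta = \nacc(C \cap \beta)$) to push cofinally many $A$-points of $\nacc(C)$ up to $\gamma$, giving $\sup(\nacc(C) \cap A) = \gamma$ for every $C \in \mathcal{C}_\gamma$ simultaneously.

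I expect the main obstacle to be the base-case construction in the case $i = 1$: unlike the $\omega$-recursion of Lemma \ref{higher_width_lemma}, here one must build $\beta \in S^{\aleph_2}_{\aleph_1}$ so that all $\aleph_1$-many clubs in $\mathcal{C}_\beta$ simultaneously capture cofinally many elements of $A$ in their $\nacc$. This demands a recursion of length $\aleph_1$ that dovetails the $\aleph_1$ indices against successive elements of $A$, takes lower bounds at limit stages (available since $\bb{U}$ is $\kappa$-directed closed and $\aleph_1 < \kappa$), and, at each stage, extends the generic indexed square coherently so that the freshly chosen $A$-point enters the $\nacc$ of $C$ for an appropriate tail of the indices. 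Arranging the bookkeeping so that every index receives a cofinal-in-$\beta$ supply of $\nacc$-points from $A$, while keeping $\cf(\beta) = \aleph_1$, is the delicate step; once it is in place, the reflection and closure arguments are routine adaptations of the lemmas already established in this section.
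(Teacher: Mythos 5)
Your proposal is correct and follows essentially the same route as the paper: the paper likewise takes the indexed $\square(\aleph_2,\aleph_i)$ models of \cite{hayut_lh} (built with the indexed square forcing, thread posets, and the fragile-set-killing iteration), invokes the arguments of Lemma \ref{reflection_lem} for the reflection, and verifies clause (4) by the modification of Lemma \ref{higher_width_lemma} in which the recursion is lengthened to $\lambda$, which is exactly the step you single out as the delicate one.
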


\begin{corollary}
  Suppose there are infinitely many supercompact cardinals, and let $m < \omega$. There is a forcing 
  extension in which $\mathrm{GCH}$ and $\boxtimes^-(\{S^{\aleph_{\omega + 1}}_{\aleph_n} \mid m \leq n < \omega\}, \aleph_m)$ hold and 
  $\mathrm{Refl}(< \aleph_m, S^{\aleph_{\omega + 1}}_{< \aleph_n})$ holds for all $n < \omega$.
\end{corollary}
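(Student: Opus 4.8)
The plan is to run the construction of part~(1) of Theorem~\ref{con_thm} over the models of \cite{hayut_lh} in place of the Levy-collapse model $V$, replacing the indexed $\square^{\mathrm{ind}}_{\mu,\cf(\mu)}$-sequence by an indexed $\square(\kappa,\aleph_m)$-sequence of width $\aleph_m$. Concretely, I would begin in a model obtained from infinitely many supercompact cardinals as in \cite{hayut_lh}, arranging $\kappa=\aleph_{\omega+1}$, $\mu=\aleph_\omega$, and GCH, and so that there is a poset $\bb{S}$ adding an indexed $\square(\kappa,\aleph_m)$-sequence $\vec{\mathcal{C}}$ together with its associated family of thread posets, while the simultaneous reflection of fewer than $\aleph_m$ stationary subsets of $S^\kappa_{<\aleph_n}$ is \emph{indestructible} under sufficiently directed closed forcing, in the spirit of Lemma~\ref{indestructibility_lemma}. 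Over this base I would define the fragile-set-killing iteration $\bb{R}$ exactly as in the proof of part~(1), with fragility defined relative to the thread posets $\bb{T}(\ell)$, and verify the analogue of Lemma~\ref{dense_closed_2}, namely that each $\bb{S}*\dot{\bb{R}}*\dot{\bb{T}}(\ell)$ has a dense $\aleph_{\ell+1}$-directed closed subset. The final model is $V[H*I]$, where $H$ is $\bb{S}$-generic and $I$ is $\bb{R}$-generic, and the bookkeeping in $\bb{R}$ ensures that every fragile set is non-stationary in $V[H*I]$.

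For the reflection requirement I would argue exactly as in Lemma~\ref{reflection_lem}. Given a family $\mathcal{S}_0$ of fewer than $\aleph_m$ stationary subsets of $S^\kappa_{<\aleph_n}$ in $V[H*I]$, each member is non-fragile, so I may choose $\ell$ larger than the relevant cofinality indices and a condition in $\bb{T}(\ell)$ forcing every member of $\mathcal{S}_0$ to remain stationary (here $|\mathcal{S}_0|<\aleph_m$ together with the closure of $\bb{T}(\ell)$ makes the preservation simultaneous). Since $V[H*I*J]$ is, via the $\aleph_{\ell+1}$-directed closed dense subset, a forcing extension of the \cite{hayut_lh} base by directed closed forcing, the indestructible simultaneous reflection there yields some $\beta\in S^\kappa_{\aleph_j}$ at which all of $\mathcal{S}_0$ reflect. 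As $V[H*I]$ and $V[H*I*J]$ compute the same cofinalities and the same clubs below $\kappa$, this reflection is downward absolute to $V[H*I]$, giving $\mathrm{Refl}(<\aleph_m, S^\kappa_{<\aleph_n})$ for every $n<\omega$.

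That $\vec{\mathcal{C}}$ is a $\boxtimes^-(\{S^\kappa_{\aleph_n}\mid m\le n<\omega\},\aleph_m)$-sequence I would obtain by the promised modification of Lemma~\ref{higher_width_lemma}. For a fixed name $\dot{A}$ for a cofinal subset of $\kappa$, I would run the genericity recursion with length $\aleph_m$ rather than $\omega$, using the closure of the relevant dense subset to take lower bounds of the decreasing $\aleph_m$-sequence at limit stages, and arranging that at the generic point $\beta$, which then has cofinality $\aleph_m$, every club $C\in\mathcal{C}_\beta$ satisfies $\sup(\nacc(C)\cap A)=\beta$. This makes $S_A=\{\beta\in S^\kappa_{\aleph_m}\mid \sup(\nacc(C)\cap A)=\beta\text{ for all }C\in\mathcal{C}_\beta\}$ stationary and directly verifies clause~(4) of $\boxtimes^-$ for the member $S^\kappa_{\aleph_m}$. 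For each $n$ with $m<n<\omega$, the reflection established above applies to $S_A\subseteq S^\kappa_{<\aleph_{m+1}}$ to produce $\gamma\in S^\kappa_{\aleph_n}$ at which $S_A$ reflects; for each $C\in\mathcal{C}_\gamma$ there are then unboundedly many $\beta\in\acc(C)\cap S_A$, and coherence of $\vec{\mathcal{C}}$ gives $C\cap\beta\in\mathcal{C}_\beta$, so $\nacc(C)\cap A$ accumulates to each such $\beta$ and hence to $\gamma$. Since GCH, and with it the remaining demands of the corollary, is arranged directly in the final model, no analogue of Lemma~\ref{diamond_lem} is needed.

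The main obstacle I anticipate is the length-$\aleph_m$ base-case recursion. In Lemma~\ref{higher_width_lemma} countable closure and the insertion of a single new $\omega$-block suffice to control the $\omega$ clubs $C_{\beta,i}$ at the generic point; here I must simultaneously control up to $\aleph_m$ clubs in $\mathcal{C}_\beta$ while continuing to decide $\dot{A}$ and to respect the fragile-set requirements encoded in the dense subset. This requires both that the indexed $\square(\kappa,\aleph_m)$-structure of \cite{hayut_lh} permit the insertion, at a new point of cofinality $\aleph_m$, of coherent clubs whose non-accumulation points all capture the chosen members of $A$, and that continuity be maintained at the limit stages of the recursion so that $\beta$ ends up with cofinality exactly $\aleph_m$ with $\mathcal{C}_\beta$ consisting of the intended clubs. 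Verifying these structural features of the \cite{hayut_lh} square is where the real work lies; once they are in hand, the reflection and coherence steps are routine generalizations of the $\omega$-width argument.
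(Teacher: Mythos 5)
Your proposal follows essentially the same route as the paper: the paper obtains this corollary by taking the models of \cite{hayut_lh} (indexed $\square(\kappa,\lambda)$-sequence plus fragile-set-killing iteration plus indestructible simultaneous reflection), invoking the arguments of Lemma \ref{reflection_lem} for the reflection clause, and modifying Lemma \ref{higher_width_lemma} so that the recursive construction has length equal to the width $\lambda=\aleph_m$ of the square sequence, exactly as you describe. Your identification of the main technical point --- controlling all $\leq\aleph_m$ clubs at the new point of cofinality $\aleph_m$ while maintaining coherence and continuity through limit stages --- matches where the paper locates the work, and your observation that no analogue of Lemma \ref{diamond_lem} is needed here is consistent with Remark \ref{remark_1}.
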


\begin{corollary}
  Suppose $\kappa$ is an inaccessible limit of supercompact cardinals, with $\langle \lambda_i 
  \mid i < \kappa \rangle$ an increasing sequence of supercompact cardinals below $\kappa$. 
  For $\ell < \kappa$, there is a forcing extension in which $\kappa$ is the least inacessible cardinal, 
  $\lambda_i$ remains a regular cardinal for all $i < \kappa$, and $\mathrm{GCH}$, $\diamondsuit(\kappa)$, 
  $\boxtimes^-(\{S^\kappa_\delta \mid \delta$ regular, $\lambda_\ell \leq \delta < \kappa \}, \lambda_\ell)$, and 
  $\mathrm{Refl}(<\lambda_\ell, \kappa)$ hold.
\end{corollary}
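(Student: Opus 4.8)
The plan is to run the template of Section~\ref{con_sect} over the models of Hayut and the author from \cite{hayut_lh}, replacing the width parameter $\omega$ of that argument by $\lambda_\ell$ throughout. First I would invoke the \cite{hayut_lh} construction for inaccessible $\kappa$ to pass to a model $V$ in which $\kappa$ is the least inaccessible cardinal, each $\lambda_i$ remains regular, $\mathrm{GCH}$ holds, there is an indexed $\square(\kappa,\lambda_\ell)$-sequence $\vec{C}$, and simultaneous reflection of families of fewer than $\lambda_\ell$ stationary subsets of $\kappa$ holds in a form indestructible under $\lambda_\ell$-directed closed forcing. As recorded in the discussion preceding the corollary, the indestructible reflection can be taken in the strong form that, for every regular $\nu$ with $\lambda_\ell \leq \nu < \kappa$ and every stationary $S \subseteq S^\kappa_{<\lambda_\ell}$, $S$ reflects at an ordinal of $S^\kappa_\nu$ in any such extension.

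Next I would perform the fragile-set-killing iteration. Let $\bb{T}$ be the natural poset threading $\vec{C}$, i.e. the analogue for the indexed $\square(\kappa,\lambda_\ell)$-sequence of the posets $\bb{T}(i)$ of Section~\ref{con_sect}, and define the iteration $\bb{R}$ over $V$ exactly as in Section~\ref{con_sect}, shooting clubs through complements of fragile sets so that in $V^{\bb{R}}$ every fragile subset of $\kappa$ is non-stationary. The engine of the whole argument is the analogue of Lemma~\ref{dense_closed_2}: for every $\eta \leq \kappa^+$, the forcing $\dot{\bb{R}}_\eta * \dot{\bb{T}}$ has a dense $\lambda_\ell$-directed closed subset, proved by the same induction on $\eta$ as there, with $\lambda_\ell$ in place of $\aleph_{i+1}$. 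This absorption does the same double duty as in Section~\ref{con_sect}: it exhibits $V^{\bb{R} * \bb{T}}$ as a $\lambda_\ell$-directed closed extension of $V$, so the indestructible reflection of \cite{hayut_lh} survives, and it makes the thread-forcing genericity arguments available in $V^{\bb{R}}$.

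The three required conclusions then follow by transcriptions of the lemmas of Section~\ref{con_sect}. Simultaneous reflection $\mathrm{Refl}(<\lambda_\ell,\kappa)$ is obtained as in Lemma~\ref{reflection_lem}: a family witnessing a failure, being stationary in $V^{\bb{R}}$, consists of non-fragile sets, so after forcing with $\bb{T}$ its members remain stationary and the indestructible reflection of \cite{hayut_lh} applies; since $V^{\bb{R}}$ and the $\bb{T}$-extension agree on ordinals of the relevant cofinalities, the reflection descends to $V^{\bb{R}}$. That $\vec{C}$ is a $\boxtimes^-(\{S^\kappa_\delta \mid \delta$ regular, $\lambda_\ell \leq \delta < \kappa \}, \lambda_\ell)$-sequence is the modification of Lemma~\ref{higher_width_lemma} in which the base-case recursion has length $\lambda_\ell$: it produces a target $\beta \in S^\kappa_{\lambda_\ell}$ into whose clubs' non-accumulation points a prescribed cofinal $A \subseteq \kappa$ is forced to stream, whence the set of such good $\beta$ is stationary, and reflecting it at an ordinal of each regular cofinality $\delta \in [\lambda_\ell,\kappa)$ gives clause~(4) for every $S^\kappa_\delta$ in the family. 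Finally $\diamondsuit(\kappa)$ is obtained exactly as in Lemma~\ref{diamond_lem}, coding the guessing sequence into the generic square sequence through the $\lambda_\ell$-directed closed dense subset, and $\mathrm{GCH}$ is arranged as usual.

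The step I expect to be the main obstacle is the directed-closure absorption lemma in this inaccessible setting. In Section~\ref{con_sect} and in \cite{cfm} the $\lambda_\ell$-directed closure of the thread forcing rested on the order-type bounds $\otp(C_{\alpha,i}) < \mu_i$ of the $\square^{\mathrm{ind}}_{\mu,\cf(\mu)}$-sequence; here one must instead extract the requisite closure and coherence from the indexed $\square(\kappa,\lambda_\ell)$-sequence of \cite{hayut_lh} and check that the induction of Lemma~\ref{dense_closed_2} still closes, in particular that at limit stages of cofinality below $\kappa$ the width-$\lambda_\ell$ threads amalgamate into a single lower bound. Meshing this with the length-$\lambda_\ell$ recursion of the higher-width lemma, so that the constructed $\beta$ has cofinality exactly $\lambda_\ell$ while the index $i(\beta)$ and the clubs $C_{\beta,i}$ are assigned coherently and $A$ is captured in each $C_{\beta,i}$, is where the bookkeeping is most delicate.
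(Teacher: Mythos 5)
Your proposal follows essentially the same route the paper takes: it obtains this corollary by starting from the models of \cite{hayut_lh}, absorbing $\bb{R}*\dot{\bb{T}}$ into a dense $\lambda_\ell$-directed closed subset as in Lemma \ref{dense_closed_2}, deriving reflection as in Lemma \ref{reflection_lem}, and then rerunning Lemma \ref{higher_width_lemma} with the recursion lengthened to $\lambda_\ell$ and Lemma \ref{diamond_lem} for $\diamondsuit(\kappa)$. The only nitpick is that for $\delta = \lambda_\ell$ itself clause (4) of $\boxtimes^-$ must come directly from the base-case density argument rather than from reflection (a stationary subset of $S^\kappa_{\lambda_\ell}$ cannot reflect at a point of cofinality $\lambda_\ell$), but your base case already supplies exactly this.
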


In light of item (1) from Theorem \ref{brodsky_rinot_thm}, these corollaries show that, for regular, uncountable $\kappa > \omega_1$, 
the existence of a $\kappa$-Souslin tree is compatible with a high degree of simultaneous stationary reflection at $\kappa$. In addition, 
in \cite{brodsky_rinot_two}, Brodsky and Rinot show that, if $\kappa$ is $<\delta$-inaccessible, i.e. $\lambda^\xi < \kappa$ for all 
$\lambda < \kappa$ and $\xi < \delta$, then $\boxtimes^-(S^\kappa_{\geq \delta}, < \kappa) + \diamondsuit(\kappa)$ implies the existence of a 
$\delta$-complete $\kappa$-Souslin tree. In the presence of appropriate cardinal arithmetic, then, our corollaries can yield highly 
complete Souslin trees.
\section{Stronger guessing principles}

In this section, we look at the relationship between $\boxtimes(\kappa)$ and stationary 
reflection. We start by proving Theorem \ref{incon_thm}. We actually prove the following 
slightly stronger result.

\begin{theorem} \label{incon_thm_2}
  Suppose $\kappa$ is a regular, uncountable cardinal, $\mathrm{Refl}(S^\kappa_\omega)$ holds, and $\vec{C} = 
  \langle C_\alpha \mid \alpha < \kappa \rangle$ is a $\square(\kappa)$-sequence. 
  Then there is a club $E \subseteq \kappa$ such that, for every 
  $\beta \in \kappa$, $\{\alpha < \beta \mid \mathrm{succ}_\omega(C_\beta \setminus \alpha) 
  \subseteq E\} = \emptyset$. In particular, $\boxtimes(\kappa)$ fails.
\end{theorem}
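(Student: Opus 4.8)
The plan is to isolate the set of potentially dangerous points and to show, using $\mathrm{Refl}(S^\kappa_\omega)$, that it is non-stationary; the club $E$ will then be \emph{any} club disjoint from it. Concretely, I would set
$$ T = \{\gamma \in S^\kappa_\omega \mid \sup(\acc(C_\gamma)) < \gamma\}, $$
the set of $\omega$-cofinal $\gamma$ whose ladder $C_\gamma$ has only boundedly many accumulation points — equivalently, those $\gamma$ whose top $\omega$ elements form a cofinal block of consecutive (hence $\nacc$) points. (One may assume $\kappa \geq \aleph_2$, since otherwise $S^\kappa_\omega$ has no reflection points and the hypothesis $\mathrm{Refl}(S^\kappa_\omega)$ is unsatisfiable, making the statement vacuous.) The two things to prove are that $T$ is non-stationary and that any club $E$ with $E \cap T = \emptyset$ satisfies the conclusion, the point being that a violation at $\beta$ would manufacture an element of $T \cap E$.

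For the first and main claim I would argue by contradiction: suppose $T$ is stationary. Since $T \subseteq S^\kappa_\omega$, $\mathrm{Refl}(S^\kappa_\omega)$ yields an ordinal $\delta$ of uncountable cofinality at which $T$ reflects, so $T \cap \delta$ is stationary in $\delta$. As $\cf(\delta) > \omega$, the set $\acc(C_\delta)$ is club in $\delta$, hence $T \cap \acc(C_\delta)$ is stationary in $\delta$. Now fix $\gamma \in T \cap \acc(C_\delta)$. Coherence of the $\square(\kappa)$-sequence gives $C_\gamma = C_\delta \cap \gamma$, whence $\acc(C_\gamma) = \acc(C_\delta) \cap \gamma$; since $\gamma \in T$, this set is bounded below $\gamma$, so $\gamma \notin \acc(\acc(C_\delta))$. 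Thus $\gamma \in \acc(C_\delta) \setminus \acc(\acc(C_\delta)) = \nacc(\acc(C_\delta))$, so $T \cap \acc(C_\delta) \subseteq \nacc(\acc(C_\delta))$. This is impossible, as the left-hand side is stationary in $\delta$ while the right-hand side, being the set of non-accumulation points of the club $\acc(C_\delta)$, is non-stationary. I expect this to be the heart of the argument: everything hinges on the coherence identity $\acc(C_\gamma) = \acc(C_\delta) \cap \gamma$ interacting with reflection at an ordinal of uncountable cofinality to trap a stationary set inside a non-stationary one.

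For the second claim I would fix a club $E$ with $E \cap T = \emptyset$ and suppose, for contradiction, that some limit $\beta < \kappa$ and $\alpha < \beta$ satisfy $\mathrm{succ}_\omega(C_\beta \setminus \alpha) \subseteq E$. Enumerating $C_\beta \setminus \alpha$ increasingly as $\langle e_n \mid n < \otp(C_\beta \setminus \alpha)\rangle$, the set $\mathrm{succ}_\omega(C_\beta \setminus \alpha)$ is exactly $\{e_n \mid 1 \leq n < \omega\}$, a set of $\omega$ consecutive elements of $C_\beta$; let $\gamma = \sup_n e_n$. Then $\cf(\gamma) = \omega$, and either $\gamma = \beta$ or $\gamma \in \acc(C_\beta)$ (so $C_\gamma = C_\beta \cap \gamma$), and in either case $C_\gamma \cap [e_1, \gamma) = \{e_n \mid 1 \leq n < \omega\}$ consists of consecutive elements of $C_\gamma$ cofinal in $\gamma$; consequently $\acc(C_\gamma)$ is bounded below $\gamma$ and $\gamma \in T$. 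On the other hand $\{e_n \mid 1 \leq n < \omega\} \subseteq E$ and $E$ is closed, so $\gamma = \sup_n e_n \in E$, contradicting $E \cap T = \emptyset$. Hence $\{\alpha < \beta \mid \mathrm{succ}_\omega(C_\beta \setminus \alpha) \subseteq E\} = \emptyset$ for every $\beta < \kappa$, as required. Finally, applying clause (4) of the definition of $\boxtimes(\kappa)$ to the constant sequence $A_i = E$ (a club, hence cofinal) shows that $\vec{C}$ is not a $\boxtimes(\kappa)$-sequence; since every $\boxtimes(\kappa)$-sequence is in particular a $\square(\kappa)$-sequence, $\boxtimes(\kappa)$ fails.
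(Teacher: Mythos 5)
Your proposal is correct and follows essentially the same route as the paper: you isolate the same set $T$ of $\omega$-cofinal points whose clubs have bounded accumulation points, use the coherence identity $\acc(C_\gamma)=\acc(C_\delta)\cap\gamma$ to see that $T$ cannot reflect at any $\delta$ of uncountable cofinality (the paper phrases this directly via the club $\acc(\acc(C_\delta))$, you phrase it contrapositively, but it is the same observation), and then derive the conclusion by noting that a violation at $\beta$ would place $\sup(\mathrm{succ}_\omega(C_\beta\setminus\alpha))$ in $E\cap T$.
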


\begin{proof}
  Let $T = \{\alpha \in S^\kappa_\omega \mid \acc(C_\alpha)$ is bounded below $\alpha\}$.

  \begin{claim}
    $T$ is non-stationary.
  \end{claim}

  \begin{proof}
    Fix $\beta \in S^\kappa_{>\omega}$, let $D_0 = \acc(C_\beta)$, and let 
    $D_1 = \acc(D_0)$. $D_1$ is club in $\beta$ and, for every $\alpha \in D_1$, we have 
    $C_\beta \cap \alpha = C_\alpha$ and $\sup(\acc(C_\beta \cap \alpha)) = \alpha$. 
    In particular, $D_1 \cap T = \emptyset$. Therefore, $T$ does not reflect and, 
    since $\mathrm{Refl}(S^\kappa_\omega)$ holds, $T$ is non-stationary.
  \end{proof}

  Thus, there is a club $E \subseteq \kappa$ such that $E \cap T = \emptyset$. Let 
  $\beta < \kappa$, and suppose that there is $\alpha < \beta$ such that 
  $\mathrm{succ}_\omega(C_\beta \setminus \alpha) \subseteq E$. Let 
  $\delta = \sup(\mathrm{succ}_\omega(C_\beta \setminus \alpha))$. Then 
  $C_\beta \cap \delta = C_\delta$, so $\max(\acc(C_\delta)) \leq \alpha < \delta$, 
  and therefore $\delta \in T$. But $\delta$ is also a limit point of $E$ and hence 
  in $E$, contradicting the fact that $E \cap T = \emptyset$. Thus, $E$ is as specified in the 
  statement of the theorem.
\end{proof}

However, this is in some sense the only obstacle to the consistency of 
$\mathrm{GCH} + \boxtimes(\kappa) + \mathrm{Refl}(\kappa)$. For example, 
we can prove the following theorem.

\begin{theorem}
  Suppose there are infinitely many supercompact cardinals. Then there is a forcing 
  extension in which $\mathrm{GCH}$ holds and there is a non-reflecting stationary 
  set $T \subseteq S^{\aleph_{\omega + 1}}_\omega$ such that, letting $S := \aleph_{\omega + 1} \setminus T$, 
  we have both $\boxtimes(\mathrm{NS}^+_{\aleph_{\omega+1}} \restriction S)$ and $\mathrm{Refl}(S)$.
\end{theorem}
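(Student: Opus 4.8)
The plan is to follow the template of Section \ref{con_sect} closely, the essential new idea being that Theorem \ref{incon_thm_2} identifies exactly which set must be left non-reflecting. Work in the model $V$ of Section \ref{con_sect}, so that $\mu = \aleph_\omega$, $\kappa = \aleph_{\omega + 1}$, $\mathrm{GCH}$ and $\mathrm{AP}_{\aleph_\omega}$ hold and, by Lemma \ref{indestructibility_lemma}, reflection of $S^\kappa_{<\aleph_n}$ at a point of $S^\kappa_{\aleph_n}$ survives any $\aleph_{n+1}$-directed closed forcing. Let $\bb{S} = \bb{S}(\kappa)$ generically add a $\square(\kappa)$-sequence $\vec{C} = \langle C_\alpha \mid \alpha < \kappa \rangle$, let $\bb{T} = \bb{T}_1(\vec{C})$ be the thread forcing, and (as in the sketch of parts (2) and (3)) call $X \subseteq \kappa$ \emph{fragile} if $\Vdash_{\bb{T}} ``X$ is non-stationary$."$ Put $T = \{\alpha \in S^\kappa_\omega \mid \acc(C_\alpha)$ is bounded below $\alpha\}$ and $S = \kappa \setminus T$, so $T \subseteq S^\kappa_\omega$ and $S \supseteq S^\kappa_{>\omega}$. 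The first dividend is that $T$ is automatically non-reflecting: the Claim inside the proof of Theorem \ref{incon_thm_2} shows, using only coherence of $\vec{C}$, that for every $\beta \in S^\kappa_{>\omega}$ the set $\acc(\acc(C_\beta))$ is club in $\beta$ and disjoint from $T$, so $T \cap \beta$ is non-stationary. Since coherence is preserved by all further forcing, the task reduces to arranging $\boxtimes(\mathrm{NS}^+_\kappa \restriction S)$ and $\mathrm{Refl}(S)$ while keeping $T$ stationary.

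I would define $\bb{R}$ in $V^{\bb{S}}$ exactly as in Section \ref{con_sect}, except that the iterands $\dot{S}_\eta$ are required to be names for fragile subsets of $S$ (we shoot a club through $\kappa \setminus \dot{S}_\eta$ only when $\Vdash ``\dot{S}_\eta \subseteq S"$). Because each such set is disjoint from $T$, no condition of $\bb{R}$ ever constrains a point of $T$, which is what lets $T$ survive. As in the single-square form of Lemma \ref{dense_closed_2}, $\bb{S} * \dot{\bb{R}} * \dot{\bb{T}}$ has a dense $\kappa$-directed closed subset $\bb{U}$, so that for any $\bb{T}$-generic $J$ the model $V[H*I*J]$ is an $\aleph_{n+1}$-directed closed generic extension of $V$. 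The proof of $\mathrm{Refl}(S)$ then copies Lemma \ref{reflection_lem}: given a stationary $S' \subseteq S$ in $V[H*I]$, reduce to $S' \subseteq S^\kappa_{<\aleph_n}$; by the bookkeeping every fragile subset of $S$ is non-stationary, so $S'$ is not fragile and some $t \in \bb{T}$ forces it stationary; passing to $V[H*I*J]$ and applying Lemma \ref{indestructibility_lemma} in this directed-closed extension of $V$ reflects $S'$ at some $\gamma \in S^\kappa_{\aleph_n} \subseteq S$, and since stationarity of $S' \cap \gamma$ in $\gamma$ is witnessed in the larger model it descends to $V[H*I]$. That $\bb{S} * \dot{\bb{R}}$ preserves $\mathrm{GCH}$ and cofinalities is as in Section \ref{con_sect}.

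The technical core, and the main obstacle, is the following strengthening of Proposition \ref{genericity_prop}, which I would prove by a density argument over $\bb{S} * \dot{\bb{R}}$ inside $\bb{U}$, in the manner of the base case of Lemma \ref{higher_width_lemma}: in $V[H*I]$, for every cofinal $A \subseteq \kappa$ and every $\gamma < \kappa$ there is $\delta > \gamma$ with $\gamma \in \acc(C_\delta)$ such that $C_\delta \cap (\gamma, \delta)$ is a set of order type $\omega$ contained in $A$. Given a name $\dot{A}$ for a cofinal set and a condition, I would build a decreasing $\omega$-chain in $\bb{U}$ deciding successive points $a_k \in \dot{A}$ above $\gamma$ and, at the limit, set $\delta = \sup_k a_k$ and $C_\delta = (C_\gamma \cup \{\gamma\}) \cup \{a_k \mid k < \omega\}$; coherence holds because the only accumulation points of $C_\delta$ lie at or below $\gamma$, and this choice never conflicts with the $\dot{r}$-coordinate since the $a_k$ are merely placed into $\vec{C}$, not into any club being shot by $\bb{R}$. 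For such $\delta$ we have $\mathrm{succ}_\omega(C_\delta \setminus \gamma) = \{a_k \mid k < \omega\} \subseteq A$ and $\delta \in T$, as $\acc(C_\delta)$ is bounded by $\gamma$. Taking $A$ to be an arbitrary club already yields $\delta \in \acc(A) \cap T \subseteq A \cap T$, so $T$ meets every club and is therefore stationary.

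Finally, $\boxtimes(\mathrm{NS}^+_\kappa \restriction S)$ is proved by contradiction exactly as in the proof that $\vec{D}$ is a $\boxtimes^-(\mathrm{NS}^+_\kappa)$-sequence, now feeding in the block-genericity above. If it failed, there would be a sequence $\langle A_i \mid i < \kappa \rangle$ of cofinal sets and a stationary $S' \subseteq S$ all of whose points fail to guess; pressing down twice gives a stationary $S_0 \subseteq S'$, a fixed $i^*$, and a bound $\alpha^*$ with $\mathrm{succ}_\omega(C_\beta \setminus \alpha) \not\subseteq A_{i^*}$ for all $\beta \in S_0$ and all $\alpha \in (\alpha^*, \beta)$. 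As $S_0$ is non-fragile, fix $t \in \bb{T}$ forcing it stationary, and, extending $t$, assume $t = C_\gamma$ for some $\gamma > \alpha^*$. Applying the block-genericity with $A = A_{i^*}$ and this $\gamma$ gives $\delta$ with $\gamma \in \acc(C_\delta)$ and a clean block $B := C_\delta \cap (\gamma, \delta) \subseteq A_{i^*}$; then $C_\delta \leq_{\bb{T}} t$, so forcing below $C_\delta$ produces a thread $E$ with $\gamma \in \acc(E)$ and $E \cap (\gamma, \delta) = B$. In $V[H*I*J]$ the set $S_0$ is still stationary and $\acc(E)$ is club, so choose $\beta \in S_0 \cap \acc(E)$ with $\beta > \delta$; then $C_\beta = E \cap \beta$ and $\mathrm{succ}_\omega(C_\beta \setminus \gamma) = B \subseteq A_{i^*}$ with $\gamma \in (\alpha^*, \beta)$, contradicting the defining property of $S_0$. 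I expect the block-genericity of the third paragraph to be the genuinely delicate step, since it requires interleaving the decisions about $\dot{A}$ with the construction of $\vec{C}$ while keeping the added block clean, coherent, and compatible with the club-shooting coordinate of $\bb{R}$; once it is in hand, the threading argument for $\boxtimes$ and the reflection argument for $\mathrm{Refl}(S)$ are faithful adaptations of Lemmas \ref{reflection_lem} and \ref{higher_width_lemma}.
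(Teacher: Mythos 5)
Your proposal is correct and follows essentially the same route as the paper: same choice of $T$, same restriction of the fragile-set iteration $\bb{R}$ to subsets of $S = \kappa \setminus T$, the same key extension lemma (your ``block-genericity'' is the paper's Lemma \ref{extension_lemma}), and the same Fodor-plus-threading contradiction for $\boxtimes(\mathrm{NS}^+_\kappa \restriction S)$. The one point worth sharpening in the delicate step you flag: the reason the $\bb{R}$-coordinate survives is not merely that the $a_k$ are ``only placed into $\vec{C}$,'' but that the interleaved construction forces $\sup_n \max(r_n(\xi)) = \delta$, so the lower bound must append $\delta$ to each club $r_n(\xi)$ being shot, and this is legal precisely because $\delta \in T$ while each $\dot{S}_\xi$ is forced to be a subset of $S$.
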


\begin{proof}
  The proof is similar to those in Section \ref{con_sect}, so we will omit some 
  repeated arguments. 
  Let $V$ be the model from the beginning of Section \ref{con_sect} obtained by 
  collapsing infinitely many supercompact cardinals to be the $\aleph_n$'s for 
  $0 < n < \omega$. Let $\mu = \aleph_\omega$ and $\kappa = \aleph_{\omega + 1}$. 
  By standard arguments, $\mathrm{Refl}(\kappa)$ 
  holds in any forcing extension of $V$ by a $\kappa$-directed closed 
  forcing poset.

  Let $\bb{S} = \bb{S}(\kappa)$. Let $G$ be $\bb{S}$-generic over $V$. In $V[G]$, 
  let $\vec{C} = \langle C_\alpha \mid \alpha < \kappa 
  \rangle$ be the generically-added $\square(\kappa)$-sequence, and let $\bb{T} = 
  \bb{T}_1(\vec{C})$. Let $T = \{\alpha < \kappa \mid \acc(C_\alpha)$ is bounded 
  below $\alpha\}$. 
  By an easy genericity argument, $T$ is a stationary subset of $S^\kappa_\omega$ 
  and, by the arguments in the proof of Theorem \ref{incon_thm_2}, $T$ does not 
  reflect. In $V[G]$ and any forcing extension thereof, we say a stationary subset 
  $S \subseteq \kappa \setminus T$ is \emph{fragile} if $\Vdash_{\bb{T}}``S$ is 
  non-stationary$."$ In $V$, let $\dot{T}$ be a canonical $\bb{S}$-name for $T$.
  
  As in Section \ref{con_sect}, define in 
  $V[G]$ a poset $\bb{R}$ so that, in the extension by $\bb{R}$, any fragile subset of 
  $\kappa \setminus T$ is non-stationary. Let $\langle \bb{R}_\eta \mid \eta \leq \kappa^+ 
  \rangle$, $\langle \dot{S}_\eta \mid \eta < \kappa^+ \rangle$, and 
  $\langle \dot{E}_\eta \mid \eta < \kappa^+ \rangle$ be as defined in Section \ref{con_sect}, 
  so that, for all $\eta < \kappa^+$, $\dot{S}_\eta$ is an $\bb{R}_\eta$-name for a 
  fragile subset of $\kappa \setminus T$ and $\dot{E}_\eta$ is an $\bb{R}_\eta \times \bb{T}$-name 
  for a club in $\kappa$ disjoint from $\dot{S}_\eta$. As in Section \ref{con_sect}, we have 
  that, in $V$, $\bb{S} * \dot{\bb{R}} * \dot{\bb{T}}$ has a dense $\kappa$-directed closed subset, 
  $\bb{U}$, consisting of all $(s, \dot{r}, \dot{t})$ such that:
  \begin{itemize}
    \item there are $r$, $t \in V$ such that $s \Vdash ``\dot{r} = \check{r}$ and 
      $\dot{t} = \check{t}"$;
    \item $\gamma^s = \sup(t)$;
    \item for all $\xi \in \dom(r)$, $(p, \dot{r} \restriction \xi, \dot{t}) \Vdash ``\max(r(\xi)) 
      \in \dot{E}_\eta$.
  \end{itemize}

  Let $H$ be $\bb{R}$-generic over $V[G]$. $V[G*H]$ will be our desired model. 
  Let $S = \kappa \setminus T$.
  $\mathrm{GCH}$ easily holds in $V[G*H]$, and arguments exactly as in Section \ref{con_sect} 
  show that $\mathrm{Refl}(S)$ holds. To finish the proof, 
  it will thus suffice to show that $\vec{C}$ is a $\boxtimes(\mathrm{NS}^+_\kappa \restriction S)$-sequence. 
  To this end, the following lemma will be useful.

  \begin{lemma} \label{extension_lemma}
    In $V[G*H]$, suppose $\gamma < \kappa$ is a limit ordinal and $A$ is an unbounded 
    subset of $\kappa$. Then there is a limit ordinal $\delta < \kappa$ such that 
    $\gamma \in \acc(C_\delta)$ and $\mathrm{succ}_\omega(C_\delta \setminus (\gamma + 1)) 
    \subseteq A$.
  \end{lemma}

  \begin{proof}
    Work in $V$. Let $\dot{A}$ be an $\bb{S} * \dot{\bb{R}}$-name for $A$, 
    and let $(s, \dot{r}) \in \bb{S} * \dot{\bb{R}}$. We will 
    find $(s^*, \dot{r}^*) \leq (s, \dot{r})$ forcing the conclusion 
    of the lemma.

    Without loss of generality, assume that $\gamma^s \geq \gamma$. Recursively construct 
    a decreasing sequence $\langle (s_n, \dot{r}_n, \dot{t}_n) \mid 
    n < \omega \rangle$ from $\bb{U}$ together with an increasing sequence of ordinals 
    below $\kappa$, $\langle \eta_n \mid n < \omega \rangle$, satisfying the following 
    conditions. 
    \begin{enumerate}
      \item $(s_0, \dot{r}_0) \leq (s, \dot{r})$;
      \item for all $n < \omega$, letting $r_n$ and $t_n$ be the elements of $V$ 
        specified in the definition of $\bb{U}$, we have that $\sup(\{\gamma^{s_n}\} 
        \cup \{\max(r_n(\xi)) \mid \xi \in \dom(r_n)\}) < \eta_n < \min(\{\gamma^{s_{n+1}}\}
        \cup \{\max(r_{n+1}(\xi)) \mid \xi \in \dom(r_{n+1})\})$;
      \item for all $n < \omega$, $(s_{n+1}, \dot{r}_{n+1}) \Vdash ``\eta_n \in \dot{A}."$ 
    \end{enumerate}

    The construction is straightforward. Let $\delta = \sup(\{\eta_n \mid n < \omega\})$, and define 
    $(s^*, \dot{r}^*)$ as follows. First, let $s^* = \langle C^{s^*}_\alpha \mid \alpha \leq \delta \rangle$, 
    where, for $\alpha < \delta$, $C^{s^*}_\alpha = C^{s_n}_\alpha$ for some $n < \omega$ such that 
    $\alpha \leq \gamma^{s_n}$ and $C^{s^*}_\delta = C^{s^*}_\gamma \cup \{\gamma\} \cup \{\eta_n \mid n < \omega\}$. 
    Then $s^* \in \bb{S}$ and $s^* \leq s_n$ for all $n < \omega$.

    Let $X = \bigcup_{n < \omega} \dom(r_n)$, and let $r^*$ be a function such that $\dom(r^*) = X$ and, 
    for all $\xi \in X$, $r^*(\xi) = \{\delta\} \cup \bigcup \{r_n(\xi) \mid \xi \in \dom(r_n)\}$. Let $\dot{r}^*$ 
    be an $\bb{S}$-name such that $(p^*) \Vdash ``\dot{r}^* = r^*."$ It is 
    straightforward to verify, by induction on $\xi \in X \cup \{\kappa^+\}$, that $(s^*, \dot{r}^* \restriction \xi) \in 
    \bb{S} * \dot{\bb{R}}_\xi$ and $(s^*, \dot{r}^* \restriction \xi) \leq (s_n, 
    \dot{r}_n \restriction \xi)$ for all $n < \omega$. This verification uses the fact that, 
    since $\max(\acc(C^{s^*}_\delta)) = \gamma < \delta$, we have $s^* \Vdash ``\delta \in \dot{T},"$ and, 
    therefore, for all $\xi \in \dom(r^*)$, $(s^*, \dot{r}^* \restriction \xi) \Vdash ``\delta \not\in \dot{S}_\xi."$
    But now $(s^*, \dot{r}^*) \leq (s, \dot{r})$ and forces 
    that $\delta$ is as desired in the statement of the lemma.
  \end{proof}

  Towards a contradiction, suppose that, in $V[G*H]$, $S_0 \subseteq S$ is stationary, 
  $\vec{A} = \langle A_i \mid i < \kappa \rangle$ is a sequence of unbounded subsets of $\kappa$, 
  and $S_0$ and $\vec{A}$ witness that $\vec{C}$ is not a 
  $\boxtimes(\mathrm{NS}^+_\kappa \restriction S)$-sequence. By two applications of Fodor's Lemma, we can find a 
  stationary $S' \subseteq S_0$, and fixed $i^*, \zeta^* < \kappa$ such that, 
  for all $\beta \in S'$, $\sup(\{\alpha < \beta \mid \mathrm{succ}_\omega(C_\beta \setminus 
  \alpha) \subseteq A_{i^*}) = \zeta^*$. Since $S'$ is not fragile, there is $t_0 \in \bb{T}$ 
  such that $t_0 \Vdash_{\bb{T}}``S'$ is stationary$."$ Let $\gamma < \kappa$ 
  be such that $t = C_\gamma$. Without loss of generality, $\gamma > \zeta^*$. 
  Use Lemma \ref{extension_lemma} to find a limit ordinal $\delta < \kappa$ 
  such that $\gamma \in \acc(C_\delta)$ and $\mathrm{succ}_\omega(C_\delta \setminus (\gamma + 1)) 
  \subseteq A_{i^*}$. Let $t^* = C_\delta$. Then $t^* \leq t$. Let $I$ be $\bb{T}$-generic 
  over $V[G*H]$ with $t^* \in I$. Let $D = \bigcup I$. $D$ is a club in $\kappa$ and, 
  for all $\beta \in \acc(D) \setminus \delta$, $\sup(\{\alpha < \beta \mid \mathrm{succ}_\omega(C_\beta 
  \setminus \alpha) \subseteq A_{i^*}) \geq \gamma + 1 > \zeta^*$. In particular, $(\acc(D) \setminus \delta) \cap 
  S' = \emptyset$, so $S'$ is non-stationary in $V[G*H*I]$, contradicting the fact that 
  $t^* \in I$, $t^* \leq t$, and $t \Vdash ``S'$ is stationary$."$
\end{proof}

\section{Trees with narrow ascent paths}

In this section, we prove Theorem \ref{asc_path_thm}. We first show that $\square^{\mathrm{ind}}_{\mu, \cf{\mu}}$ 
suffices to obtain a special $\mu^+$-tree with a narrow ascent path.

\begin{theorem}
  Suppose $\mu$ is a singular cardinal and $\square^{\mathrm{ind}}_{\mu, cf(\mu)}$ holds. 
  Then there is a special $\mu^+$-tree with a $\cf(\mu)$-ascent path.
\end{theorem}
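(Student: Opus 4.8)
The plan is to use the $\square^{\mathrm{ind}}_{\mu,\cf(\mu)}$-sequence $\langle C_{\alpha,i} \mid \alpha<\kappa,\ i(\alpha)\le i<\theta\rangle$ (with witnessing cardinals $\langle \mu_i \mid i<\theta\rangle$, where $\theta:=\cf(\mu)$ and $\kappa:=\mu^+$) to manufacture, for each $\alpha<\kappa$ and each $i$ with $i(\alpha)\le i<\theta$, a function $e_{\alpha,i}:\alpha\to\mu$, and then to read both the tree and its ascent path off of these functions. First I would fix once and for all a pairing $\langle\cdot,\cdot\rangle:\mu\times\mu\to\mu$ and, for every pair $\eta<\eta'<\kappa$, an injection of the interval $[\eta,\eta')$ into $\mu$ (possible since $|\eta'|\le\mu$). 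I would then define $e_{\alpha,i}$ by recursion on $\alpha$ so that (a) for $\xi<\alpha$ the first coordinate of $e_{\alpha,i}(\xi)$ is the position $\otp(C_{\alpha,i}\cap\xi)<\mu_i<\mu$, while the second coordinate records an injective label of $\xi$ inside the block $[\delta,\delta')$ of $C_{\alpha,i}$ containing it; and (b) $e_{\alpha,i}$ coheres exactly at accumulation points: if $\beta\in\acc(C_{\alpha,i})$ then $e_{\alpha,i}\restriction\beta=e_{\beta,i}$. Clause (b) is consistent with the recursion precisely because clause (5) of the definition of $\square^{\mathrm{ind}}$ guarantees $C_{\alpha,i}\cap\beta=C_{\beta,i}$ and $i(\beta)\le i$ at such $\beta$, so the blocks of $C_{\alpha,i}$ below $\beta$ are exactly those of $C_{\beta,i}$.

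Next I would let $T=\{e_{\alpha,i}\restriction\delta \mid \delta\le\alpha<\kappa,\ i(\alpha)\le i<\theta\}$, ordered by end-extension, so that a node with domain $\delta$ sits on level $\delta$. The height is $\kappa$, since $e_{\delta,i}$ lives on level $\delta$ for every $\delta$. For the level bound I would argue that $e_{\alpha,i}\restriction\delta$ is determined by the closed set $C_{\alpha,i}\cap\delta$ together with $i$, and that clause (5) forces $|\{C_{\alpha,i}\cap\delta \mid \alpha,i\}|\le\mu$: if $\delta\in\acc(C_{\alpha,i})$ the trace is some $C_{\delta,j}$ (at most $\theta$ choices), and otherwise it is $C_{\bar\eta,j}$ for the largest accumulation point $\bar\eta<\delta$ together with a finite tail of non-accumulation points, whence at most $\mu$ possibilities. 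Thus $|T_\delta|\le\mu$ and $T$ is a $\kappa$-tree. That $T$ has no cofinal branch I would get from the order-type clause: a cofinal branch would yield $f:\kappa\to\mu$ whose jump set $D$ (the ordinals where the first coordinate increases) is a club with $D\cap\beta\in\{C_{\beta,j}\mid j\}$ for every $\beta\in\acc(D)$, so that $\otp(D\cap\beta)<\mu$ there; choosing $\beta\in\acc(D)$ above $\mu$ with $\otp(D\cap\beta)=\beta$ gives $\beta=\otp(D\cap\beta)<\mu\le\beta$, a contradiction.

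For the ascent path I would set $b_\gamma(i)=e_{\gamma,i}$ when $i(\gamma)\le i$, and $b_\gamma(i)=e_{\gamma,i(\gamma)}$ otherwise, so that $b_\gamma:\theta\to T_\gamma$. Given limit ordinals $\alpha<\beta$, clause (6) furnishes $i_0$ with $i(\alpha),i(\beta)\le i_0$ and $\alpha\in\acc(C_{\beta,i_0})$; clause (4) (monotonicity $C_{\beta,i}\subseteq C_{\beta,j}$) propagates $\alpha\in\acc(C_{\beta,i})$ to all $i\in[i_0,\theta)$, and clause (b) of the construction gives $e_{\beta,i}\restriction\alpha=e_{\alpha,i}$, i.e.\ $b_\alpha(i)<_T b_\beta(i)$, for every $i\ge i_0$. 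Reducing arbitrary $\alpha<\beta$ to the case of limit ordinals then yields the $\theta$-ascent property. Specialness I would witness by a map $g:T\to\mu$ of the form $g(e_{\alpha,i}\restriction\delta)=\langle\otp(C_{\alpha,i}\cap\delta),\ \ell\rangle$, where $\ell$ is the within-block label at the top of the node; along a chain the traces $C_{\alpha,i}\cap\delta$ end-extend, so either the order-type coordinate strictly increases at a new club point, or the two nodes lie in a common block and are separated by the injective within-block label, whence $g$ is injective on chains.

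The step I expect to be the main obstacle is the simultaneous fulfilment of the level bound and of specialness, which pull against each other on the long gaps of the clubs. Keeping $|T_\delta|\le\mu$ wants the labels on a block $[\delta,\delta')$ to be read off in a way that does not depend on the far endpoint $\delta'$ (which is invisible to a node of domain $\le\delta'$), whereas ruling out ``eventually constant'' cofinal branches and specializing the tree wants those same labels to be genuinely injective along the whole block, even when the block has order type $\ge\mu$. Reconciling these — choosing the block labelings so that each $e_{\alpha,i}$ is injective while $e_{\alpha,i}\restriction\delta$ still ranges over only $\mu$ possibilities — is the delicate technical core, and is exactly where the order-type bound $\otp(C_{\alpha,i})<\mu_i<\mu$ and the coherence clause (5) must be used in tandem; the ascent-path verification, by contrast, is a direct consequence of clauses (4)–(6).
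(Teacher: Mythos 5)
Your outline reproduces the right high-level shape (coherent objects attached to each pair $(\alpha,i)$, a tree of their restrictions, non-branching from the order-type bound, and an ascent path via clauses (4)--(6)), and the ascent-path and no-cofinal-branch arguments you sketch are sound. But the obstacle you flag at the end is not a technicality to be smoothed over later; it is a genuine gap that your ``pair of coordinates'' construction cannot close. A node $e_{\alpha,i}\restriction\delta$ is \emph{not} determined by the trace $C_{\alpha,i}\cap\delta$ together with $i$: on the final partial block $[\max(C_{\alpha,i}\cap\delta),\delta)$ the second coordinate is computed from your fixed injection of the interval $[\delta_0,\delta')$, where $\delta'=\min(C_{\alpha,i}\setminus\delta)$ is invisible to the node and ranges over $\mu^+$ many values. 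So the level-count argument breaks, and no choice of interval injections repairs it: to make the label of $\xi$ independent of the far endpoint you would need a single function on $[\delta_0,\mu^+)$ that is injective on every initial segment that could arise as a block, i.e.\ injective on a set of size $\mu^+$ into $\mu$. The same circularity infects your specializing map, since ``the within-block label at the top of the node'' again requires knowing the block's far endpoint. In short, one application of the square sequence is not enough to handle long gaps.

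The paper resolves exactly this by iterating the descent: it uses Todorcevic-style minimal walks along the indexed square sequence. Instead of labelling $\xi$ inside its block by an external injection, one walks from $\beta$ down to $\xi$, at each step passing to $\min(C_{\beta_m,i_m}\setminus\xi)$ and recursing into the square sequence there; the node at level $\alpha^*$ is the sequence of traces $\langle \mathrm{tr}_i(\alpha,\beta)\mid\alpha\le\alpha^*\rangle$, which is determined by the \emph{finite} projection $\mathrm{pr}_i(\alpha^*,\beta)$ (a finite sequence from a set $\mathcal{D}_{\alpha^*}$ of size $\le\mu$, by the same ``last accumulation point plus a finite tail'' count you use), so the levels have size $\le\mu$. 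Specialness then comes for free: each branch is increasing in the Kleene--Brouwer order on ${}^{<\omega}\mu$, hence injective, so mapping a node to its last value specializes the tree --- no within-block injectivity is ever needed. Your coherence-at-accumulation-points observation and your use of clauses (4)--(6) for the ascent path survive essentially verbatim in that setting (with $\beta_{\alpha^*}=\alpha^*+\omega$ as the walk's starting point), but as written your construction does not yield a $\mu^+$-tree.
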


\begin{proof}
  Let $\langle C_{\alpha, i} \mid \alpha < \mu^+, i(\alpha) \leq i < \cf(\mu) \rangle$ be a 
  $\square^{\mathrm{ind}}_{\mu, \cf(\mu)}$-sequence. If $\beta = \alpha + 1 < \mu^+$ is a 
  successor ordinal, we may assume that $i(\beta) = 0$ and $C_{\beta, i} = \{\alpha\}$ for 
  all $i < \cf(\mu)$. We will define a special 
  $\mu^+$-tree using Todorcevic's method of minimal walks.

  Suppose $\alpha < \beta < \mu^+$ and $i(\beta) \leq i < \cf(\mu)$. Then the $i^{\mathrm{th}}$ 
  minimal walk from $\beta$ to $\alpha$ is a finite sequence of pairs 
  $\langle (\beta_m, i_m) \mid m < n_i(\alpha, \beta) \rangle$, decreasing in the first coordinate, satisfying $\alpha < \beta_m \leq \beta$ 
  and $i(\beta_m) \leq i_m < \cf(\mu)$ for all $m < n_i(\alpha, \beta)$, constructed by recursion on $m$ 
  as follows:
  \begin{itemize}
    \item $\beta_0 = \beta$ and $i_0 = i$;
    \item if $(\beta_m, i_m)$ is defined and $\alpha \in C_{\beta_m, i_m}$, then let 
      $n_i(\alpha, \beta) = m+1$ and stop the construction;
    \item if $(\beta_m, i_m)$ is defined and $\alpha \not\in C_{\beta_m, i_m}$, then let 
      $\beta_{m+1} = \min(C_{\beta_m, i_m} \setminus \alpha)$ and let $i_{m+1} = i(\beta_{m+1})$.
  \end{itemize}
  Define the \emph{projection} of this walk, $\mathrm{pr}_i(\alpha, \beta)$, to be the sequence 
  $\langle C_{\beta_m, i_m} \cap \alpha \mid m < n_i(\alpha, \beta) \rangle$, and define the 
  \emph{trace} of the walk, $\mathrm{tr}_i(\alpha, \beta)$, to be the sequence 
  $\langle \otp(C_{\beta_m, i_m} \cap \alpha) \mid m < n_i(\alpha, \beta) \rangle$. For convenience, 
  define $\mathrm{pr}_i(\alpha, \alpha) = \emptyset = \mathrm{tr}_i(\alpha, \alpha)$ for all 
  $\alpha < \mu^+$ and $i < \cf(\mu)$.

  We now prove some basic facts about these sequences. Recall first the Kleene-Brouwer ordering $<_{\mathrm{KB}}$ on 
  ${^{<\omega}} \mu$, the set of finite sequences from $\mu$. If $\sigma$ and $\tau$ are distinct elements of 
  ${^{<\omega}} \mu$, then $\sigma <_{\mathrm{KB}} \tau$ iff either $\sigma$ end-extends $\tau$ or $\sigma(m) < \tau(m)$, 
  where $m$ is the least natural number such that $\sigma(m) \neq \tau(m)$.

  \begin{lemma} \label{walk_lemma_1}
    Suppose $\alpha_0 < \alpha_1 < \beta < \mu^+$ and $i(\beta) \leq i < \cf(\mu)$. 
    \begin{enumerate}
      \item $\mathrm{tr}_i(\alpha_0, \beta) <_{\mathrm{KB}} \mathrm{tr}_i(\alpha_1, \beta)$.
      \item If $\alpha_1$ and $\beta$ are limit ordinals and $\alpha_1 \in \acc(C_{\beta, i})$, 
        then $\mathrm{pr}_i(\alpha_0, \alpha_1) = \mathrm{pr}_i(\alpha_0, \beta)$.
    \end{enumerate}
  \end{lemma}
  
  \begin{proof}
    We show (1). For $k < 2$, let $\langle (\beta^k_m, i^k_m) \mid m < n_i(\alpha_k, \beta) \rangle$ be the 
    $i^{\mathrm{th}}$ minimal walk from $\beta$ to $\alpha_k$.

    \textbf{Case 1: For all $m < n_i(\alpha_1, \beta)$, $C_{\beta^1_m, i^1_m} \cap [\alpha_0, \alpha_1) = \emptyset$.} 
      In this case, $(\beta^0_m, i^0_m) = (\beta^1_m, i^1_m)$ for all $m < n_i(\alpha_1, \beta)$, $\beta^0_{n_i(\alpha_1, \beta)} = \alpha_1$, 
      and therefore $\mathrm{tr}_i(\alpha_0, \beta)$ end-extends $\mathrm{tr}_i(\alpha_1, \beta)$.

    \textbf{Case 2: Otherwise.} Let $m$ be least such that $C_{\beta^1_m, i^1_m} \cap [\alpha_0, \alpha_1) \neq \emptyset$. 
      Then $(\beta^0_\ell, i^0_\ell) = (\beta^1_\ell, i^1_\ell)$ for all $\ell \leq m$, 
      $C_{\beta^0_\ell, i^0_\ell} \cap \alpha_1 = C_{\beta^0_\ell, i^0_\ell} \cap \alpha_0$ for all $\ell < m$, 
      and $\otp(C_{\beta^0_m, i^0_m} \cap \alpha_0) < \otp(C_{\beta^0_m, i^0_m} \cap \alpha_1)$, so, again, 
      $\mathrm{tr}_i(\alpha_0, \beta) <_{\mathrm{KB}} \mathrm{tr}_i(\alpha_1, \beta)$.

    (2) follows directly from the definitions.
  \end{proof}

  \begin{lemma} \label{walk_lemma_2}
    Suppose $\alpha_0 < \alpha_1 < \beta_0 < \beta_1 < \mu^+$ and $i(\beta_0), i(\beta_1) \leq i < \cf(\mu)$.
    Suppose also that $\mathrm{pr}_i(\alpha_1, \beta_0) = 
    \mathrm{pr}_i(\alpha_1, \beta_1)$. Then $\mathrm{pr}_i(\alpha_0, \beta_0) = \mathrm{pr}_i(\alpha_0, \beta_1)$.
  \end{lemma}

  \begin{proof}
    Let $n = n_i(\alpha_1, \beta_0) = n_i(\alpha_1, \beta_1)$. For $k < 2$, let $\langle (\beta^k_m, i^k_m) \mid m < n \rangle$ be the $i^{\mathrm{th}}$ 
    minimal walk from $\beta_k$ to $\alpha_1$. Let $\langle D_m \mid m < n \rangle = \mathrm{pr}_i(\alpha_1, \beta_0) = \mathrm{pr}_i(\alpha_1, \beta_1)$. 
    Suppose first that, for all $m < n$, $D_m \cap [\alpha_0, \alpha_1) = \emptyset$. In this case, we have
    $\mathrm{pr}_i(\alpha_0, \beta_0) = \mathrm{pr}_i(\alpha_1, \beta_0) {^\frown} \mathrm{pr}_{i(\alpha_1)}(\alpha_0, \alpha_1) = 
    \mathrm{pr}_i(\alpha_0, \beta_1)$, and we are done. Otherwise, let $m < n$ be least such that $D_m \cap [\alpha_0, \alpha_1) \neq \emptyset$. 
    Let $\gamma = \min(D_m \setminus \alpha_0)$. Then 
    $\mathrm{pr}_i(\alpha_0, \beta_0) = \mathrm{pr}_i(\alpha_1, \beta_0) \restriction m {^\frown} \langle D_m \cap \alpha_0 \rangle 
    {^\frown} \mathrm{pr}_{i(\gamma)}(\alpha_0, \gamma) = \mathrm{pr}_i(\alpha_0, \beta_1)$.
  \end{proof}

  \begin{lemma} \label{size_lemma}
    Fix $\alpha < \mu^+$, and let $\mathcal{D}_\alpha = \{C_{\beta, i} \cap \alpha \mid \beta < \mu^+, i(\beta) \leq i < \cf(\mu) \}$. 
    Then $|\mathcal{D}_\alpha| \leq \mu$.
  \end{lemma}

  \begin{proof}
    Suppose $D \in \mathcal{D}_\alpha$. If $\alpha$ is limit and $\sup(D) = \alpha$, then $D = C_{\alpha, i}$ for some 
    $i(\alpha) \leq i < \cf(\mu)$. Otherwise, let $\gamma = \max(\acc(D))$. Then $D$ is the union of $C_{\gamma, i}$ for 
    some $i(\gamma) \leq i < \cf(\mu)$ and a finite subset of $\alpha$. In either case, there are at most $\mu$ choices for $D$.
  \end{proof}

  We are now ready to define our desired $\mu^+$-tree, $T$. Elements of $T$ will be all sequences of the form 
  $\langle \mathrm{tr}_i(\alpha, \beta) \mid \alpha \leq \alpha^* \rangle$, where $\alpha^* < \beta < \mu^+$ and 
  $i(\beta) \leq i < \cf(\mu)$. If $s, t \in T$, then $s <_T t$ iff $t$ end-extends $s$. $T$ is thus manifestly a 
  tree of height $\mu^+$ and, for all $\alpha^* < \mu^+$, $T_{\alpha^*}$ is precisely the set of 
  sequences in $T$ of length $\alpha^* + 1$.

  \begin{claim}
    $T$ is a $\mu^+$-tree, i.e. $|T_{\alpha^*}| \leq \mu$ for all $\alpha^* < \mu^+$.
  \end{claim}

  \begin{proof}
    Fix $\alpha^* < \mu^+$. By Lemma \ref{walk_lemma_2}, for all $\alpha^* < \beta < \mu^+$ and all 
    $i(\beta) \leq i < \mu^+$, 
    $\langle \mathrm{tr}_i(\alpha, \beta) \mid \alpha \leq \alpha^* \rangle$ is determined 
    by $\mathrm{pr}_i(\alpha^*, \beta)$.
    Since $\mathrm{pr}_i(\alpha^*, \beta)$ is a finite sequence from $\mathcal{D}_{\alpha^*}$, 
    Lemma \ref{size_lemma} implies that $|T_{\alpha^*}| \leq \mu$.
  \end{proof}

  \begin{claim}
    $T$ is special.
  \end{claim}

  \begin{proof}
    Let $f$ be a bijection from $\mu^{<\omega}$ to $\mu$. By (1) of Lemma \ref{walk_lemma_1}, 
    each element of $T$ is a $<_{\mathrm{KB}}$-increasing sequence from $\mu^{<\omega}$. In 
    particular, each element of $T$ is an injective sequence, so the function 
    $t \mapsto f(t(\max(\dom(t)))$ witnesses that $T$ is special.
  \end{proof}

  \begin{claim}
    $T$ has a $\cf(\mu)$-ascent path.
  \end{claim}

  \begin{proof}
    For $\alpha^* < \mu^+$, let $\beta_{\alpha^*} = \alpha^* + \omega$. Define $\langle 
    b_{\alpha^*}:\cf(\mu) \rightarrow T_{\alpha^*} \mid \alpha^* < \mu^+ \rangle$ as follows. 
    If $\alpha^* < \mu^+$ and $i < i(\beta_{\alpha^*})$, let $b_{\alpha^*}(i)$ be an 
    arbitrary element of $T_{\alpha^*}$. If $i(\beta_{\alpha^*}) \leq i < \cf(\mu)$, 
    then let $b_{\alpha^*}(i) = \langle \mathrm{tr}_i(\alpha, \beta_{\alpha^*}) \mid 
    \alpha \leq \alpha^* \rangle$.

    Fix $\alpha_0 < \alpha_1 < \mu^+$. Let $i^* < \cf(\mu)$ be least such that 
    $\beta_{\alpha_0} \in \acc(C_{\beta_{\alpha_1}})$ (if $\beta_{\alpha_0} = \beta_{\alpha_1}$, 
    $i^*$ is simply $i(\beta_{\alpha_0})$). By (2) of Lemma \ref{walk_lemma_1}, for all 
    $\alpha \leq \alpha_0$ and all $i^* \leq i < \cf(\mu)$, $\mathrm{tr}_i(\alpha, \beta_{\alpha_0}) = 
    \mathrm{tr}_i(\alpha, \beta_{\alpha_1})$. Thus, for all $i^* \leq i < \cf(\mu)$, 
    $b_{\alpha_0}(i) <_T b_{\alpha_1}(i)$, so $\langle b_{\alpha^*} \mid \alpha^* < \mu^+ \rangle$ 
    is a $\cf(\mu)$-ascent path.
  \end{proof}
  \let\qed\relax
\end{proof}

Clause (1) of Theorem \ref{asc_path_thm} now follows from the following result.

\begin{theorem}
  Suppose $\mu$ is a singular cardinal and $\square_\mu$ holds. Then $\square^{\mathrm{ind}}_{\mu, \cf(\mu)}$ holds.
\end{theorem}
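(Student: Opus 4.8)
The plan is to fix a $\square_\mu$-sequence $\langle C_\alpha \mid \alpha < \mu^+ \rangle$ — which, after a routine modification, we may assume satisfies $\otp(C_\alpha) \le \mu$ for all $\alpha$, the coherence $C_\gamma = C_\alpha \cap \gamma$ for $\gamma \in \acc(C_\alpha)$, and $C_{\alpha+1} = \{\alpha\}$ — and then to build the indexed sequence $\langle C_{\alpha,i} \mid \alpha < \mu^+,\ i(\alpha) \le i < \cf(\mu) \rangle$ by recursion on $\alpha$. First I would fix an increasing sequence of regular cardinals $\langle \mu_i \mid i < \cf(\mu) \rangle$, cofinal in $\mu$, with $\mu_0 > \cf(\mu)$; this is the sequence witnessing clause (3). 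Since $\mu$ is singular, every limit $\alpha < \mu^+$ has $\cf(\alpha) < \mu$, so I would set $i(\alpha)$ to be the least $i$ with $\cf(\alpha) < \mu_i$ (and $i(\alpha) = 0$ at successors). This choice keeps all thresholds below $\cf(\mu)$ and makes the index half of clause (5) automatic: if $\gamma \in \acc(C_{\alpha,i})$ then $\cf(\gamma) \le \otp(C_{\alpha,i}) < \mu_i$, whence $i(\gamma) \le i$, and only the identity $C_{\gamma,i} = C_{\alpha,i} \cap \gamma$ remains to be secured by the construction.

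The heart of the argument is the recursive definition of $C_{\alpha,i}$. At a limit stage $\beta$ I would use the single $\square_\mu$-club $C_\beta$ as a coherent \emph{spine} and glue onto it the previously constructed clubs at the \emph{same} index $i$. Concretely, along $\acc(C_\beta)$ the $\square_\mu$-coherence gives $C_\gamma = C_\beta \cap \gamma$, so I can arrange that, below each $\gamma \in \acc(C_\beta)$ that has been ``activated'' at level $i$, the club $C_{\beta,i}$ agrees with $C_{\gamma,i}$, thereby forcing clause (5); clause (4) is arranged by taking the set of activated points to increase with $i$ and by invoking the inductive monotonicity $C_{\gamma,i} \subseteq C_{\gamma,j}$. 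The order-type bound $\otp(C_{\beta,i}) < \mu_i$ of clause (3) would then follow by induction, since $C_{\beta,i}$ is assembled from fewer than $\mu_i$ blocks, each of order type below $\mu_i$, and $\mu_i$ is regular.

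The subtle point, and what I expect to be the main obstacle, is clause (6): every limit $\gamma < \beta$ must become a \emph{genuine} accumulation point of some $C_{\beta,i}$, including those $\gamma$ not lying on the spine $C_\beta$ at all. To reach such a $\gamma$ I would descend through the $\square_\mu$-gaps by a minimal walk — replacing $\beta$ by $\min(C_\beta \setminus \gamma)$ and iterating — and, once the index $i$ is large enough both to pass $i(\gamma)$ and for the walk to ``settle,'' insert $C_{\gamma,i} \cup \{\gamma\}$ into $C_{\beta,i}$, so that $\gamma \in \acc(C_{\beta,i})$ and $C_{\beta,i} \cap \gamma = C_{\gamma,i}$ hold simultaneously. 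The delicate part is that these insertions, performed for all limit $\gamma < \beta$ across all levels, must be mutually consistent (so that the coherence of clause (5) is never violated) and must not overpopulate a single level (so that the order-type bound of clause (3) survives). I would control this with a bookkeeping that activates $\gamma$ at level $i$ only when $i \ge i(\gamma)$ and the walk from $\beta$ to $\gamma$ is bounded in length and trace in terms of $i$, and then verify clauses (5) and (6) together by a simultaneous induction on $\beta$, the activated points at level $i$ being at most $\otp(C_{\beta,i}) < \mu_i$ in number. Granting the theorem, clause (1) of Theorem \ref{asc_path_thm} follows at once by feeding the resulting $\square^{\mathrm{ind}}_{\mu,\cf(\mu)}$-sequence into the preceding theorem.
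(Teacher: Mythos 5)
Your plan founders on the single design decision that the whole recursion hinges on: the definition of $i(\alpha)$. You set $i(\alpha)$ to be the least $i$ with $\cf(\alpha) < \mu_i$, but the gluing step you describe --- taking $C_{\beta,i}$ to be the union of the $C_{\gamma,i}$ for $\gamma$ on the spine $\acc(D_\beta)$ --- requires that every spine point $\gamma$ already carry a club at level $i$, i.e.\ that $i(\gamma) \leq i$ for all $\gamma \in \acc(D_\beta)$ and all $i \geq i(\beta)$. With your definition this fails badly: if $\cf(\beta) = \omega$ then $i(\beta)$ is minimal, yet $\acc(D_\beta)$ may contain points of every cofinality $\mu_n$, so the thresholds $i(\gamma)$ of the spine points are unbounded in $\cf(\mu)$ and no level-$i(\beta)$ union exists. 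Your proposed remedy (activate only some spine points at low levels, and reach the off-spine limit ordinals required by clause (6) via minimal walks) then collides with the rigidity of clause (5): once $\gamma \in \acc(C_{\beta,i})$, the set $C_{\beta,i} \cap \gamma$ is forced to equal $C_{\gamma,i}$, so you cannot ``insert $C_{\gamma,i} \cup \{\gamma\}$'' for an off-spine $\gamma$ without discarding everything built below $\gamma$ at that level, and doing this for several such $\gamma$ at a single level requires that they already form a coherent chain. You correctly flag this mutual-consistency problem as the main obstacle, but you do not resolve it; likewise the order-type bound of clause (3) is asserted for a count of ``blocks'' that your index function does not actually control.

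The paper's proof avoids all of this with one move: it sets $i(\alpha)$ to be the least $i$ with $\otp(D_\alpha) < \mu_i$, where $\langle D_\alpha \mid \alpha < \mu^+\rangle$ is the $\square_\mu$-sequence. Since $D_\alpha = D_\beta \cap \alpha$ for $\alpha \in \acc(D_\beta)$, this gives $i(\alpha) \leq i(\beta)$ for every spine point, so when $\sup(\acc(D_\beta)) = \beta$ one can simply put $C_{\beta,i} = \bigcup_{\alpha \in \acc(D_\beta)} C_{\alpha,i}$ for all $i \geq i(\beta)$, with coherence and the order-type bound following from the inductive hypothesis $\acc(D_\alpha) \subseteq \acc(C_{\alpha,i(\alpha)})$ and the regularity of $\mu_i$. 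The remaining cases, where $\acc(D_\beta)$ is bounded in $\beta$, force $\cf(\beta) = \omega$, so only an $\omega$-sequence of blocks must be glued and the index can be pushed up along it. Clause (6) then needs no walks at all: given a limit $\gamma < \beta$, pick a spine point (or a member of the chosen $\omega$-sequence) $\alpha > \gamma$, apply clause (6) inductively at $\alpha$ to get $j$ with $\gamma \in \acc(C_{\alpha,j})$, and pass to a level $i \geq j$ at which $\alpha \in \acc(C_{\beta,i})$, using monotonicity in the index. As written, your argument has a genuine gap; redoing it with the order-type-based index function closes it.
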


\begin{proof}
  Suppose $\vec{D} = \langle D_\alpha \mid \alpha < \mu^+ \rangle$ is a $\square_\mu$-sequence. Since $\mu$ is singular, 
  we may assume that $\otp(D_\alpha) < \mu$ for all $\alpha < \mu^+$. Let $\langle \mu_i \mid i < \cf(\mu) \rangle$ be 
  an increasing sequence of regular, uncountable cardinals, cofinal in $\mu$. We will define a $\square^{\mathrm{ind}}_{\mu, \cf(\mu)}$-sequence 
  $\vec{C} = \langle C_{\alpha, i} \mid \alpha < \mu^+, i(\alpha) \leq i < \cf(\mu) \rangle$ such that, for all $\alpha < \mu^+$ 
  and all $i(\alpha) \leq i < \cf(\mu)$, $\otp(C_{\alpha, i}) < \mu_i$. The construction is by induction on $\alpha$, 
  maintaining the following inductive hypotheses:
  \begin{itemize}
    \item for all $\alpha < \mu^+$, $i(\alpha)$ is the least $i < \cf(\mu)$ such that $\otp(D_\alpha) < \mu_i$;
    \item for all $\alpha < \mu^+$, $\acc(D_\alpha) \subseteq \acc(C_{\alpha, i(\alpha)})$.
  \end{itemize}
  Thus, suppose $\beta < \mu^+$ is a limit ordinal and we have defined $\langle C_{\alpha, i} \mid \alpha < \beta, 
  i(\alpha) \leq i < \cf(\mu) \rangle$. $i(\beta)$ is the least $i < \cf(\mu)$ such that $\otp(D_\beta) < \mu_i$. 
  We now consider three separate cases.

  \textbf{Case 1: $\sup(\acc(D_\beta)) = \beta$.} In this case, note that, for all $\alpha \in \acc(D_\beta)$, we have 
  $\otp(D_\alpha) < \otp(D_\beta) < \mu_{i(\beta)}$, so we have $i(\alpha) \leq i(\beta)$. For all $i(\beta) \leq i < \cf(\mu)$, 
  let $C_{\beta,i} = \bigcup_{\alpha \in \acc(D_\beta)} C_{\alpha, i}$. To see that $\otp(C_{\beta, i}) < \mu_i$, note that, 
  by the inductive hypothesis, we have that $\otp(C_{\alpha, i}) < \mu_i$ for all $\alpha \in \acc(D_\beta)$. Also by the 
  inductive hypothesis, if $\alpha_0 < \alpha_1$ are both in $\acc(D_\beta)$, then, since $\alpha_0 \in \acc(D_{\alpha_1})$, 
  we have $C_{\alpha_1, i} \cap \alpha_0 = C_{\alpha_0, i}$ for all $i(\beta) \leq i < \cf(\mu)$. Therefore, every 
  initial segment of $C_{\beta, i}$ has order type $< \mu_i$. Since $\acc(D_\beta)$ is a cofinal subset of $C_{\beta, i}$ and 
  $\otp(D_\beta) < \mu_i$, this implies that $\otp(C_{\beta, i}) < \mu_i$. All of the other requirements in the definition of 
  $\square^{\mathrm{ind}}_{\mu, \cf(\mu)}$ and the inductive hypotheses are straightforward to verify.

  \textbf{Case 2: $\beta$ is a limit of limit ordinals and $\sup(\acc(D_\beta)) < \beta$.} In this case, 
  $\cf(\beta) = \omega$. Let $\langle \alpha_n \mid n < \omega \rangle$ be an increasing sequence of limit ordinals, 
  cofinal in $\beta$, such that, if $\otp(D_\beta) \neq \omega$, then $\alpha_0 = \sup(\acc(D_\beta))$. Let 
  $\langle i_n \mid n < \omega \rangle$ be a strictly increasing sequence of ordinals below $\cf(\mu)$ such that 
  $i_0 = i(\beta)$ and, for $0 < n < \omega$, we have $\{\alpha_m \mid m < n\} \subseteq \acc(C_{\alpha_n, i_n})$. 
  Fix $i$ such that $i(\beta) \leq i < \cf(\mu)$, and let us define $C_{\beta, i}$. If there is $n < \omega$ 
  such that $i \in [i_n, i_{n+1})$, then let $C_{\beta, i} = C_{\alpha_n, i} \cup \{\alpha_\ell \mid n \leq \ell < \omega\}$. 
  If $i \geq \sup(\{i_n \mid n < \omega\})$, then let $C_{\beta, i} = \bigcup_{n < \omega} C_{\alpha_n, i}$. It is 
  straightforward to verify that this satisfies our requirements.

  \textbf{Case 3: $\beta = \alpha + \omega$ for some limit ordinal $\alpha$.} Let $\alpha_0 = \sup(\acc(D_\beta))$ 
  (if $\otp(D_\beta) = \omega$, let $\alpha_0 = \omega$). Let $i^* < \cf(\mu)$ be the least $i$ such that $i(\beta) \leq i$ 
  and $\alpha_0 \in \acc(C_{\alpha, i})$ (if $\alpha_0 = \alpha$, let $i^* = i(\beta)$). For $i(\beta) \leq i < i^*$, 
  let $C_{\beta, i} = C_{\alpha_0, i} \cup \{\alpha_0\} \cup \{\alpha + n \mid n < \omega\}$. For $i^* \leq i < \cf(\mu)$, 
  let $C_{\beta, i} = C_{\alpha, i} \cup \{\alpha + n \mid n < \omega\}$. It is again straightforward to verify that 
  this satisfies all of our requirements and thus completes the construction of our $\square^{\mathrm{ind}}_{\mu, \cf(\mu)}$-sequence.
\end{proof}

For our proof of clause (2) of Theorem \ref{asc_path_thm}, the following definitions will be useful.

\begin{definition}
  Suppose $\lambda < \kappa$ are regular cardinals and $c:[\kappa]^2 \rightarrow \lambda$.
  \begin{enumerate}
    \item $c$ is \emph{subadditive} if, for all $\alpha < \beta < \gamma < \kappa$:
      \begin{enumerate}
        \item $c(\alpha, \gamma) \leq \max(c(\alpha, \beta), c(\beta, \gamma))$;
        \item $c(\alpha, \beta) \leq \max(c(\alpha, \gamma), c(\beta, \gamma))$.
      \end{enumerate}
    \item $c$ is \emph{unbounded} if, for all unbounded $A \subseteq \kappa$, $c``[A]^2$ 
      is unbounded in $\lambda$.
  \end{enumerate}
\end{definition}

Suppose $\lambda < \kappa$ are regular cardinals, $T$ is a tree of height $\kappa$ with no cofinal branch, 
and $\langle b_\gamma: \lambda \rightarrow T_\gamma \mid \gamma < \kappa \rangle$ is a $\lambda$-ascent path 
through $T$. Define a function $c:[\kappa]^2 \rightarrow \lambda$ by letting, for all $\alpha < \beta < \kappa$, 
$c(\alpha, \beta)$ be the least $\eta < \lambda$ such that, for all $\eta \leq \xi < \lambda$, $b_\alpha(\xi) 
<_T b_\beta(\xi)$. Then $c$ is easily seen to be an unbounded subadditive function. Therefore, the non-existence 
of unbounded subadditive functions from $[\kappa]^2$ to $\lambda$ will suffice to prove that all trees of 
height $\kappa$ with a $\lambda$-ascent path must have a cofinal branch.

Recall next the following definition.

\begin{definition}
  Let $\lambda < \kappa$ be regular cardinals. $\mathcal{D} = \langle D(i, \beta) \mid i < \lambda, \beta < \kappa \rangle$ 
  is a \emph{$\lambda$-covering matrix for $\kappa$} if the following hold:
  \begin{enumerate}
    \item for all $\beta < \kappa$, $\bigcup_{i < \lambda} D(i, \beta) = \beta$;
    \item for all $\beta < \kappa$ and $i < j < \lambda$, $D(i, \beta) \subseteq D(j, \beta)$;
    \item for all $\beta < \gamma < \kappa$ and $i < \lambda$, there is $j < \lambda$ 
      such that $D(i, \beta) \subseteq D(j, \gamma)$.
  \end{enumerate}
  If $\mathcal{D}$ is a $\lambda$-covering matrix for $\kappa$, $\mathcal{D}$ is called \emph{locally downward coherent} 
  if, for all $X \in [\kappa]^{\leq \lambda}$, there is $\gamma_X < \kappa$ such that, for all $\beta < \kappa$ and 
  $i < \lambda$, there is $j < \lambda$ such that $D(i, \beta) \cap X \subseteq D(j, \gamma_X)$.
\end{definition}

The following covering property was introduced by Viale in his proof that the Singular Cardinals Hypothesis follows from 
the Proper Forcing Axiom (see \cite{viale}).

\begin{definition}
  Let $\lambda < \kappa$ be regular cardinals, and let $\mathcal{D}$ be a $\lambda$-covering matrix for $\kappa$. 
  $\mathrm{CP}(\mathcal{D})$ holds if there is an unbounded $A \subseteq \kappa$ such that, for all $X \in [A]^{\leq \lambda}$, 
  there are $i < \lambda$ and $\beta < \kappa$ such that $X \subseteq D(i, \beta)$. 

  $\mathrm{CP}(\kappa, \lambda)$ is the assertion that $\mathrm{CP}(\mathcal{D})$ holds whenever $\mathcal{D}$ is a 
  locally downward coherent $\lambda$-covering matrix for $\kappa$. $\mathrm{CP}^*(\kappa)$ is the assertion that 
  $\mathrm{CP}(\kappa, \theta)$ holds for every regular $\theta$ with $\theta^+ < \kappa$.
\end{definition}

\begin{lemma}
  Suppose $\lambda < \kappa$ are regular cardinals, $\mathrm{CP}(\kappa, \lambda)$ holds, and $c:[\kappa]^2 \rightarrow \lambda$ 
  is subadditive. Then $c$ is not unbounded.
\end{lemma}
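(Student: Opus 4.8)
The plan is to extract from $c$ a locally downward coherent $\lambda$-covering matrix, feed it to $\mathrm{CP}(\kappa,\lambda)$, and show that the resulting unbounded set witnesses that $c$ is not unbounded. Concretely, for $i<\lambda$ and $\beta<\kappa$ I set
$$D(i,\beta)=\{\alpha<\beta\mid c(\alpha,\beta)\le i\}.$$
First I would check that $\mathcal{D}=\langle D(i,\beta)\mid i<\lambda,\ \beta<\kappa\rangle$ is a $\lambda$-covering matrix for $\kappa$. Clauses (1) and (2) of the definition are immediate, since $c(\alpha,\beta)<\lambda$ for every $\alpha<\beta$ and the $D(i,\beta)$ increase with $i$. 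For clause (3), given $\beta<\gamma$ and $i<\lambda$, subadditivity clause (a) gives $c(\alpha,\gamma)\le\max(c(\alpha,\beta),c(\beta,\gamma))$ for $\alpha<\beta$, so $D(i,\beta)\subseteq D(j,\gamma)$ with $j=\max(i,c(\beta,\gamma))$.

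Next comes local downward coherence. Given $X\in[\kappa]^{\le\lambda}$, I put $\gamma_X=\sup(X)+1$, which is $<\kappa$ because $|X|\le\lambda<\cf(\kappa)=\kappa$. Fix $\beta<\kappa$, $i<\lambda$, and $\alpha\in D(i,\beta)\cap X$; then $\alpha<\gamma_X$, and I bound $c(\alpha,\gamma_X)$ in two cases. If $\beta\le\gamma_X$, then clause (a) applied to $\alpha<\beta<\gamma_X$ (the equality $\beta=\gamma_X$ being trivial) gives $c(\alpha,\gamma_X)\le\max(i,c(\{\beta,\gamma_X\}))$; if $\gamma_X<\beta$, then clause (b) applied to $\alpha<\gamma_X<\beta$ gives $c(\alpha,\gamma_X)\le\max(c(\alpha,\beta),c(\gamma_X,\beta))\le\max(i,c(\{\beta,\gamma_X\}))$. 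In either case $j:=\max(i,c(\{\beta,\gamma_X\}))<\lambda$ depends only on $i$, $\beta$, and $\gamma_X$, and $D(i,\beta)\cap X\subseteq D(j,\gamma_X)$. This case analysis is the one fiddly computation, and it is exactly where both subadditivity clauses are used; I expect it to be the main technical obstacle, though a routine one.

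Now I apply $\mathrm{CP}(\kappa,\lambda)$ to the locally downward coherent covering matrix $\mathcal{D}$ to obtain an unbounded $A\subseteq\kappa$ such that every $X\in[A]^{\le\lambda}$ lies in some $D(i,\beta)$. The payoff of subadditivity clause (b) is that $X\subseteq D(i,\beta)$ forces $c``[X]^2\subseteq i+1$: any $\alpha<\alpha'$ in $X$ satisfy $\alpha<\alpha'<\beta$ with $c(\alpha,\beta),c(\alpha',\beta)\le i$, so $c(\alpha,\alpha')\le\max(c(\alpha,\beta),c(\alpha',\beta))\le i$. Finally I would argue that $c``[A]^2$ is bounded in $\lambda$. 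Otherwise, fixing a cofinal sequence $\langle\zeta_\nu\mid\nu<\lambda\rangle$ in $\lambda$ and, for each $\nu$, a pair $\alpha_\nu<\beta_\nu$ in $A$ with $c(\alpha_\nu,\beta_\nu)\ge\zeta_\nu$, the set $X=\{\alpha_\nu,\beta_\nu\mid\nu<\lambda\}$ has size $\le\lambda$ but $c``[X]^2$ is unbounded in $\lambda$, contradicting the previous observation. Hence $A$ is an unbounded subset of $\kappa$ with $c``[A]^2$ bounded in $\lambda$, so $c$ is not unbounded, as desired.
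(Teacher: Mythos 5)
Your proof is correct and follows essentially the same route as the paper: the same covering matrix $D(i,\beta)=\{\alpha<\beta\mid c(\alpha,\beta)\le i\}$, the same choice of $\gamma_X$ for local downward coherence, and the same concluding argument that an unbounded $c``[A]^2$ would produce a $\lambda$-sized subset of $A$ contained in no single $D(i,\beta)$. Your final step is in fact slightly cleaner than the paper's, since phrasing the contradiction via a cofinal sequence $\langle\zeta_\nu\mid\nu<\lambda\rangle$ sidesteps a harmless off-by-one in the paper's inequalities.
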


\begin{proof}
  Define $\mathcal{D} = \langle D(i, \beta) \mid i < \lambda, \beta < \kappa \rangle$ by letting, for all $i < \lambda$ and 
  $\beta < \kappa$, $D(i, \beta) = \{\alpha < \beta \mid c(\alpha, \beta) \leq i\}$. We claim that $\mathcal{D}$ is a 
  locally downward coherent $\lambda$-covering matrix for $\kappa$. Items (1) and (2) in the definition of a covering matrix 
  are immediate. To show (3), fix $\beta < \gamma < \kappa$ and $i < \lambda$. Let $j =  \max(c(\beta, \gamma), i)$. 
  By subadditiviy of $c$, $D(i, \beta) \subseteq D(j, \gamma)$. To show that $\mathcal{D}$ is 
  locally downward coherent, fix $X \in [\kappa]^{\leq \lambda}$. We claim that $\gamma_X = \sup\{\alpha + 1 \mid \alpha \in X\}$ 
  is as desired in the definition. We thus fix $\beta < \kappa$ and $i < \lambda$ and find $j < \lambda$ such that 
  $D(i, \beta) \cap X \subseteq D(j, \gamma_X)$. If $\beta \leq \gamma_X$, then we are done by (3) in the definition of a 
  covering matrix. Thus, suppose $\beta > \gamma_X$. Let $j = \max(c(\gamma_X, \beta), i)$. By subadditivity of $c$, 
  if $\alpha \in D(i, \beta) \cap X$, then $c(\alpha, \gamma_X) \leq j$, so $D(i, \beta) \cap X \subseteq D(j, \gamma_X)$.

  By $\mathrm{CP}(\kappa, \lambda)$, there is an unbounded $A \subseteq \kappa$ such that, for all $X \in [A]^{\leq \lambda}$, 
  there are $i < \lambda$ and $\beta < \kappa$ such that $X \subseteq D(i, \beta)$. We claim there is $i < \lambda$ 
  such that $c``[A]^2 \subseteq i$, which will show that $c$ is not unbounded.

  Suppose this is not the case. Then, for every $i < \lambda$, there are $\alpha_i < \beta_i$ in $A$ such that 
  $c(\alpha_i, \beta_i) \geq i$. Let $X = \{\alpha_i, \beta_i \mid i < \lambda\}$. $X \in [A]^{\leq \lambda}$, so 
  there are $i^* < \lambda$ and $\beta^* < \kappa$ such that $X \subseteq D(i^*, \beta^*)$. This means that, 
  for all $i < \lambda$, we have $c(\alpha_i, \beta^*), c(\beta_i, \beta^*) < i^*$. By subadditivity, 
  $c(\alpha_i, \beta_i) < i^*$. In particular, $c(\alpha_{i^*}, \beta_{i^*}) < i^*$, which is a contradiction.
\end{proof}

In \cite{covering_2}, we prove that, assuming the consistency of large cardinals, $\mathrm{CP}^*(\mu^+)$ 
is compatible with $\square_{\mu, 2}$ for uncountable $\mu$ and, for inaccessible $\kappa$, $\mathrm{CP}^*(\kappa)$ 
is compatible with $\square(\kappa, 2)$. We therefore obtain the following corollaries, the second of which yields clause (2) 
of Theorem \ref{asc_path_thm} (again, $\aleph_{\omega+1}$ is used just for concreteness; similar results can be obtained 
at other successors of singular cardinals). In all cases, we can easily arrange for GCH to hold in the forcing extension.

\begin{corollary}
  Suppose $\mu < \kappa$ are regular, uncountable cardinals, with $\kappa$ measurable. There is a forcing extension, 
  preserving all cardinals $\leq \mu$, in which $\kappa = \mu^+$, $\square_{\mu, 2}$ holds, and, for all 
  regular $\lambda < \mu$, there is no unbounded subadditive function $c:[\kappa]^2 \rightarrow \lambda$.
\end{corollary}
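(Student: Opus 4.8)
The plan is to obtain the ambient model directly from the construction cited from \cite{covering_2} and then read off the conclusion using the Lemma proved immediately above. First I would invoke that external result: since $\kappa$ is measurable and $\mu < \kappa$ is regular and uncountable, there is a forcing extension, preserving all cardinals $\leq \mu$, in which $\kappa = \mu^+$, $\square_{\mu, 2}$ holds, and $\mathrm{CP}^*(\mu^+)$ holds. (As noted in the discussion preceding the corollary, $\mathrm{GCH}$ can be arranged as well, though it is not needed for the stated conclusion.) From this point on I work inside that extension, where $\kappa = \mu^+$ and $\square_{\mu,2}$ are already secured; only the clause about subadditive functions remains to be checked.

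To verify that clause, fix an arbitrary regular $\lambda < \mu$. Since $\lambda < \mu$ and $\mu$ is a cardinal, $\lambda^+ \leq \mu$, hence $\lambda^+ < \mu^+ = \kappa$. Thus $\lambda$ is a regular cardinal with $\lambda^+ < \kappa$, and by the definition of $\mathrm{CP}^*(\mu^+)$ we obtain $\mathrm{CP}(\kappa, \lambda)$. Now $\lambda < \kappa$ are regular and $\mathrm{CP}(\kappa, \lambda)$ holds, so the Lemma applies verbatim and tells us that no subadditive function $c:[\kappa]^2 \rightarrow \lambda$ can be unbounded. As $\lambda$ was an arbitrary regular cardinal below $\mu$, this is precisely the asserted conclusion.

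The entire combinatorial substance of the corollary is thus packaged into two inputs that are already available: the model-theoretic consistency result of \cite{covering_2}, which supplies a single extension simultaneously satisfying $\square_{\mu,2}$ and $\mathrm{CP}^*(\mu^+)$, and the Lemma, which converts $\mathrm{CP}(\kappa,\lambda)$ into the nonexistence of unbounded subadditive colorings. Accordingly, there is no genuine obstacle internal to this argument: the only step requiring real work is the consistency statement being cited, which is proved elsewhere, while the in-paper portion reduces to the elementary observation that $\mathrm{CP}^*(\mu^+)$ yields $\mathrm{CP}(\kappa,\lambda)$ for every relevant $\lambda$, followed by a single application of the Lemma.
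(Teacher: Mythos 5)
Your proposal is correct and is essentially identical to the paper's own argument: the paper derives this corollary directly from the cited consistency result of \cite{covering_2} (a model with $\kappa=\mu^+$, $\square_{\mu,2}$, and $\mathrm{CP}^*(\mu^+)$) combined with the preceding lemma converting $\mathrm{CP}(\kappa,\lambda)$ into the nonexistence of unbounded subadditive functions. Your explicit check that $\lambda^+ < \kappa$ for every regular $\lambda < \mu$, so that $\mathrm{CP}^*(\mu^+)$ yields $\mathrm{CP}(\kappa,\lambda)$, is exactly the (unstated) bookkeeping the paper relies on.
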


\begin{corollary}
  Suppose there are infinitely many supercompact cardinals. There is a forcing extension in which $\square_{\aleph_\omega, 2}$ 
  holds and, for all $n < \omega$, there is no unbounded subadditive function $c:[\aleph_{\omega+1}]^2 \rightarrow \aleph_n$.
\end{corollary}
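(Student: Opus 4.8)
The plan is to read off the corollary from two ingredients already in hand: the consistency result of \cite{covering_2} quoted above and the preceding Lemma relating $\mathrm{CP}(\kappa, \lambda)$ to subadditive functions. First I would invoke \cite{covering_2} at $\mu = \aleph_\omega$: that construction shows, from the large cardinal hypothesis appropriate to $\aleph_\omega$ (infinitely many supercompact cardinals, matching the hypothesis here), that $\square_{\mu, 2}$ and $\mathrm{CP}^*(\mu^+)$ can be forced to hold simultaneously. Since $\mu^+ = \aleph_{\omega+1}$, this produces a forcing extension $W$ in which both $\square_{\aleph_\omega, 2}$ and $\mathrm{CP}^*(\aleph_{\omega+1})$ hold.

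Working in $W$, I would fix $n < \omega$ and note that $\aleph_n$ is regular with $(\aleph_n)^+ = \aleph_{n+1} < \aleph_{\omega+1}$; by the definition of $\mathrm{CP}^*$, this delivers $\mathrm{CP}(\aleph_{\omega+1}, \aleph_n)$. Applying the Lemma with $\kappa = \aleph_{\omega+1}$ and $\lambda = \aleph_n$ (both regular, with $\lambda < \kappa$), every subadditive $c:[\aleph_{\omega+1}]^2 \to \aleph_n$ fails to be unbounded, so in particular there is no unbounded subadditive function of this form. As $n$ was arbitrary, the conclusion holds for all $n < \omega$ at once. To complete the statement I would arrange GCH, which is routine: the \cite{covering_2} forcing may be performed over a GCH ground model, and the relevant cardinal arithmetic can be maintained without disturbing $\square_{\aleph_\omega, 2}$ or the covering properties.

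The only genuine mathematical obstacle lies entirely in \cite{covering_2}, namely the simultaneous forcing of $\square_{\aleph_\omega, 2}$ and $\mathrm{CP}^*(\aleph_{\omega+1})$; granting that, the corollary is a clean deduction. For the intended application to Theorem \ref{asc_path_thm}, I would also recall the earlier observation that a $\lambda$-ascent path through a branchless tree of height $\kappa$ induces an unbounded subadditive function $c:[\kappa]^2 \to \lambda$. Combined with the corollary, this shows that in $W$ every tree of height $\aleph_{\omega+1}$ carrying a $\lambda$-ascent path for some regular $\lambda < \aleph_\omega$ must possess a cofinal branch, which is precisely clause (2) of Theorem \ref{asc_path_thm} at $\mu = \aleph_\omega$.
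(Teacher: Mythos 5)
Your proposal is correct and follows exactly the route the paper intends: cite the construction from \cite{covering_2} giving $\square_{\aleph_\omega,2}$ together with $\mathrm{CP}^*(\aleph_{\omega+1})$, note that $\mathrm{CP}^*(\aleph_{\omega+1})$ yields $\mathrm{CP}(\aleph_{\omega+1},\aleph_n)$ for each $n$ since $\aleph_{n+1} < \aleph_{\omega+1}$, and apply the preceding Lemma to rule out unbounded subadditive functions into $\aleph_n$. The paper states the corollary without further argument for precisely this reason, so there is nothing to add.
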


\begin{corollary}
  Suppose $\kappa$ is an inaccessible limit of supercompact cardinals. There is a cardinal-preserving forcing extension
  in which $\kappa$ remains inaccessible, $\square(\kappa, 2)$ holds, and, for all regular $\lambda < \kappa$, there is 
  no unbounded subadditive function $c:[\kappa]^2 \rightarrow \lambda$.
\end{corollary}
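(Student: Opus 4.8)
The plan is to assemble this corollary directly from the consistency result of \cite{covering_2} together with the Lemma proved immediately above relating $\mathrm{CP}(\kappa, \lambda)$ to subadditive functions. First I would invoke \cite{covering_2}: starting from $\kappa$ an inaccessible limit of supercompact cardinals, there is a cardinal-preserving forcing extension in which $\kappa$ remains inaccessible and both $\square(\kappa, 2)$ and $\mathrm{CP}^*(\kappa)$ hold. As noted in the surrounding discussion, a routine preparatory or subsequent GCH-forcing can be absorbed into the construction so that GCH holds as well, all while preserving cardinals and the inaccessibility of $\kappa$; since GCH is not actually required by the statement, I would mention this only in passing.

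Next I would verify that, in this model, $\mathrm{CP}(\kappa, \lambda)$ holds for \emph{every} regular $\lambda < \kappa$, not merely those covered by the literal clause of $\mathrm{CP}^*(\kappa)$. By definition, $\mathrm{CP}^*(\kappa)$ asserts $\mathrm{CP}(\kappa, \theta)$ for every regular $\theta$ with $\theta^+ < \kappa$. Since $\kappa$ is inaccessible it is in particular a limit cardinal, so for any regular $\lambda < \kappa$ we have $\lambda^+ < \kappa$; hence $\mathrm{CP}(\kappa, \lambda)$ holds for each such $\lambda$. This is the only genuinely arithmetical step, and it is precisely here that inaccessibility is used (at a successor of a singular one would instead appeal to the $\mathrm{CP}^*(\mu^+)$ result and would have to watch the excluded cofinality).

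Finally, I would fix a regular $\lambda < \kappa$ and suppose toward a contradiction that $c : [\kappa]^2 \rightarrow \lambda$ is an unbounded subadditive function. By the preceding Lemma, $\mathrm{CP}(\kappa, \lambda)$ implies that no subadditive $c : [\kappa]^2 \rightarrow \lambda$ can be unbounded, yielding the contradiction. Hence no such $c$ exists, for each regular $\lambda < \kappa$, which is exactly the conclusion of the corollary.

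I expect essentially no obstacle internal to this argument: all of the difficulty has been pushed into the cited consistency theorem of \cite{covering_2}, and what remains is the bookkeeping observation that inaccessibility guarantees $\lambda^+ < \kappa$ for every regular $\lambda < \kappa$, so that $\mathrm{CP}^*(\kappa)$ supplies every needed instance of $\mathrm{CP}(\kappa, \lambda)$. If instead one wanted a self-contained proof, the hard part would be reproving the compatibility of $\square(\kappa, 2)$ with $\mathrm{CP}^*(\kappa)$ at an inaccessible cardinal, which is exactly the forcing construction carried out in \cite{covering_2} and which I would not attempt to reproduce here.
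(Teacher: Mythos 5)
Your proposal is correct and matches the paper's (implicit) argument exactly: the corollary is stated as an immediate consequence of the cited consistency of $\square(\kappa,2)$ with $\mathrm{CP}^*(\kappa)$ at an inaccessible from \cite{covering_2}, combined with the preceding lemma that $\mathrm{CP}(\kappa,\lambda)$ rules out unbounded subadditive $c:[\kappa]^2\rightarrow\lambda$. Your observation that inaccessibility gives $\lambda^+<\kappa$ for every regular $\lambda<\kappa$, so that $\mathrm{CP}^*(\kappa)$ supplies every needed instance, is the only bookkeeping step and it is handled correctly.
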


\bibliographystyle{amsplain}
\bibliography{squares_and_reflection}

\end{document}